\documentclass[a4paper,leqno]{amsart}

\usepackage[T1]{fontenc}
\usepackage[utf8]{inputenc}

\usepackage[ngerman,english]{babel}

\usepackage[colorlinks=true, pdfstartview=FitV, linkcolor=blue, citecolor=blue, urlcolor=blue]{hyperref}

\usepackage{amssymb,amsthm,mathtools,mathrsfs,extarrows,bbm}

\usepackage{tikz-cd}
\usetikzlibrary{babel}

\usepackage[paper=a4paper,left=25mm,right=25mm, bottom=40mm]{geometry}

\theoremstyle{definition}
\newtheorem{defi}{Definition}[section]
\newtheorem{notation}[defi]{Notation}
\newtheorem*{ack}{Acknowledgements}
\theoremstyle{plain}
\newtheorem{lemma}[defi]{Lemma}
\newtheorem{prop}[defi]{Proposition}

\newtheorem{thm}[defi]{Theorem}
\newtheorem{example}[defi]{Example}

\numberwithin{equation}{section}

\newcommand{\N}{\mathbb{N}}
\newcommand{\Z}{\mathbb{Z}}

\newcommand{\R}{\mathbb{R}}
\newcommand{\1}{\mathbbm{1}}
\newcommand{\Oe}{{\Omega_\e}}
\newcommand{\Od}{{\Omega^{(\delta)}}}
\newcommand{\hOe}{{\hat{\Omega}_\e}}
\newcommand{\T}{\mathcal{T}}
\newcommand{\e}{\varepsilon}
\newcommand{\es}{{\varepsilon'}}
\newcommand{\xeps}{\frac{x}{\e}}
\newcommand{\xxeps}{\left(x,\xeps\right)}
\newcommand{\xy}{(x,y)}
\newcommand{\intO}{\int\limits_\Omega}
\newcommand{\intOe}{\int\limits_\Oe}
\newcommand{\inthOe}{\int\limits_{\hat{\Omega}_\e}}
\newcommand{\intY}{\int\limits_Y}
\newcommand{\intYp}{\int\limits_{\Yp}}
\newcommand{\intYpx}{\int\limits_{\Ypx}}
\newcommand{\intOY}{\intO \!\! \intY}
\newcommand{\intOYp}{\intO \!\intYp}
\newcommand{\intOYpx}{\intO \! \intYpx}
\newcommand{\norm}[2]{\left|\left|#1\right| \right|_{#2}}
\newcommand{\Yp}{{Y^*}}
\newcommand{\Ypx}{{Y^*_x}}
\newcommand{\hu}{{\hat{u}}}
\newcommand{\hue}{{\hat{u}_\e}}
\newcommand{\hp}{{\hat{\varphi}}}
\newcommand{\pe}{\varphi_{\e,\psi_\e}}
\newcommand{\pO}{\varphi_{\psi_0}}
\newcommand{\TscD}{\overset{D}{\rightharpoonup}\mathrel{\mspace{-15mu}}\rightharpoonup}
\newcommand{\TscW}[1]{\overset{#1}{\rightharpoonup}\mathrel{\mspace{-15mu}}\rightharpoonup}
\newcommand{\TscS}[1]{\overset{#1}{\rightarrow}\mathrel{\mspace{-15mu}}\rightarrow}
\newcommand{\Tscs}[1]{\overset{< #1}{\rightarrow}\mathrel{\mspace{-15mu}}\rightarrow}
\newcommand{\Tscw}[1]{\overset{< #1}{\rightharpoonup}\mathrel{\mspace{-15mu}}\rightharpoonup}

\title[The two-scale transformation method]{The two-scale transformation method}

\author[D.~Wiedemann]{David Wiedemann}
\address[D.~Wiedemann]{Fakultät f\"ur Mathematik, Universit\"at Augsburg, 86135 Augsburg, Germany}
\email{david.wiedemann@math.uni-augsburg.de}
\thanks{The author was partially supported by a doctoral scholarship of \emph{Studienstiftung des deutschen Volkes}.}

\keywords{Homogenisation, two-scale convergence, two-scale transformation method, locally periodic microstructure, evolving microstructure}
\subjclass[2020]{35B27}
\date{June 25, 2021}

\begin{document}	
\begin{abstract}
We prove the two-scale transformation method which allows rigorous homogenisation of problems defined on locally periodic domains by transformation on periodic domains. 
The idea to consider periodic substitute problems was originally proposed by M.~A.~Peter for the homogenisation on evolving microstructure and is applied in several works. However, only the homogenisation of the periodic substitute problems was proven, whereas the method itself was just postulated (i.e.~the equivalence to the homogenisation of the actual problem had to be assumed). In this work, we develop this idea further and formulate a rigorous two-scale convergence concept for microscopic transformation to prove this method. 
Moreover, we show a new two-scale transformation rule for gradients which allows to derive new limit problems that are now transformationally independent.
\end{abstract}	
\maketitle
\tableofcontents

\section{Introduction}
Periodic homogenisation allows to derive effective macroscopic models for processes described on a fine heterogeneous microscopic structure as for example given in composite materials or porous media. Thus, effective physical, biological or geological models, like the Darcy law, can be derived rigorously.
The main assumption for periodic homogenisation is the microscopic periodic structure, which can be scaled by a parameter $\e>0$ and arbitrarily refined in a limit process.
However, this is too restrictive in many applications because it cannot capture local microscopic varieties or temporal changes of the microstructure, which can have a considerable impact.
For processes in which the periodic structure is only given by coefficients, as for example in composite materials, this assumption can be weakened. There, the convergence theory can handle coefficients $A_\e(t,x) = A\left(t,x,\frac{x}{\e}\right)$, which can capture spatial or temporal changes in the microstructure.
However, if the microstructure is also given by the domain $\Oe \subset \Omega \subset \R^N$ itself, as for example in porous media problems, the procedure can not be transferred directly. Instead, special compactness results are required. These, and often also the derivation of the solutions' uniform estimates, depend largely on the strict periodic structure (cf. \cite{All92}, \cite{All96}, \cite{Tar79}).

In order to overcome this strongly restricting microscopic periodicity, M.~A.~Peter proposed the following method in \cite{Pet07a}.
Instead of homogenising the actual problem, he transformed it into a substitute problem on a periodic domain. There, the domain's local periodicity becomes a local periodicity of the coefficients, which can be handled by the two-scale convergence. Then, the homogenised equation can be transformed back to an associated evolving domain.
However, the homogenisation of the substitute problem is a priori not equivalent to the homogenisation of the actual problem, which would mean that \eqref{eq:diagram} commutes. Therefore, the method itself was only proposed and has not been proven until now. Nevertheless, this method found wide application -- in the sense that the back-transformations is done formally and only the homogenisation of the substitute problems is proven --  since it allows to consider many interesting problems, particularly on domains evolving in time (see \cite{Pet07b}, \cite{Pet09a}, \cite{Pet09b}, \cite{EM17}, \cite{GNP21}).

In this work, we develop the idea of \cite{Pet07a} further, introduce a two-scale concept for \textit{locally periodic domains} and formulate a rigorous two-scale convergence concept for this transformation method. Thus, we can show that these transformations actually commute with the two-scale convergence. Hence, we call it \textit{two-scale transformation method}. This proves the method and also allows to show that \eqref{eq:diagram} commutes.
Furthermore, we prove a new two-scale transformation rule for gradients. Thus, we can improve the result for the limit problem in the case of a slow process (flux scaling by $\e^0$) significantly. Previously, the formal back-transformation could only tackle the homogenised problem and not the two-scale limit problem. As a consequence, the result still depended on the chosen transformation. With the help of the new gradient transformation rule, we can directly transform the two-scale limit equations back and derive a new (equivalent) homogenised problem which is independent of the transformation.

Moreover, the results developed in this work allow to translate two-scale compactness results from periodic domains to locally periodic domains. Consequently a direct homogenisation of the actual problem becomes possible if it is defined on locally periodic domains.
\begin{equation}\label{eq:diagram}
\hspace{-1.5cm}
\begin{tikzcd}[row sep=1cm, column sep = 6.5cm]
\textrm{microproblem} \arrow[r, "\textrm{homogenisation on locally periodic domains}"] \arrow[d, "\textrm{transformation}" ] & \textrm{macroproblem} 
\arrow[d, leftarrow, "\textrm{back-transformation}"] \\
\textrm{transformed microproblem}
\arrow[r, "\textrm{homogenisation on periodic domains}"]& \textrm{transformed macroproblem}
\end{tikzcd}
\end{equation}

This paper is arranged as follows: In Section \ref{sec:TwoScale-Unfolding}, we recap the two-scale convergence (cf.~\cite{All92}) as well as the unfolding operator (cf.~\cite{CDG08}) and state results about them, which we employ to prove the two-scale transformation method.
In Section \ref{sec:TrafoMethod}, we introduce the two-scale transformation method and formulate the assumptions on the locally periodic domain. Then, we show that the two-scale convergence and the two-scale transformation commute, which allows to perform \eqref{eq:diagram} rigorously. Moreover, we derive the new two-scale transformation rule for gradients, which improves the results of the back-transformation.
In Section \ref{sec:HomogenisationProblem}, we demonstrate the method by homogenising the following diffusion process on locally periodic domains $\Oe\subset \Omega$, which are defined in Section \ref{sec:TrafoMethod}. There, we consider the case of a fast flux ($l=0$) as well as the case of a slow flux ($l=2$).
Let $A_\e\in L^\infty(\Oe)^{N \times N}$ be  bounded and uniformly coercive, i.e.~there exist $C,\alpha >0$ such that $\norm{A_\e}{L^\infty(\Oe)} \leq C$ and $ \xi^\top A_\e(x) \xi \geq \alpha \norm{\xi}{}^2$ for every $\e >0$, a.e.~$x \in \Oe$ and every~$\xi \in \R^N$ and let $f_\e \in L^2(\Oe)$. Then, find $u_\e \in H^1(\Oe)$ such that
\begin{align}\label{eq:weakE}
\intOe \e^l A_\e(x)\nabla u_\e(x) \cdot \nabla \varphi(x) + u_\e(x) \varphi(x) dx = \intOe f_\e(x) \varphi(x) dx
\end{align}
for all $\varphi \in H^1(\Oe)$.
First, we transform \eqref{eq:weakE} on the periodic reference domain and show how to derive uniform estimates for the substitute problem.
Afterwards, we pass to the limit $\e\to0$ in the periodic substitute problem. By using the two-scale transformation method and particularly the new transformation rule for the gradients, we transform the limit problems back. The results are the homogenised problem \eqref{eq:weak-u0}  (in the case of $l=0$) and the two-scale problem \eqref{eq:weakForm-two-scale-limit-2} (in the case of $l=2$).

In the following, we use $C>0$ as generic constant, which is independent of $\e$. Let $(\e_n)_{n \in \N}$ be a fixed sequence of positive real numbers converging to $0$ (when it is clear from the context, we omit the subscript~$n$). Moreover, we write $\es$ for a subsequence  $(u_{n_k})_{k\in \N}$ of $\e$.

\section{Two-scale convergence and the unfolding operator}\label{sec:TwoScale-Unfolding}
For the homogenisation of \eqref{eq:weakE} and the assumption on the domain $\Oe$, we use the two-scale convergence (cf.~\cite{All92}, \cite{LNW02}).
Let $\Omega \subset \R^N$ be a bounded open set and, for simplicity, let $Y \coloneqq [0,1]^N$ denote the reference cell. Nevertheless, all the arguments can be transferred to arbitrary reference parallelotopes $Y\subset \R^N$.

\begin{defi}\label{def:T-Sc-D}
We say that a sequence $u_\e$ in $L^1(\Omega)$ two-scale converges distributionally to $u_0 \!\in L^1(\Omega \times Y)$ if
\begin{align}\label{eq:T-Sc-D}
\lim\limits_{\e\to 0}\intO u_\e(x) \varphi_\# \xxeps dx = \intOY u_0\xy \varphi\xy dydx
\end{align}
for every $\varphi \in D(\Omega;C^\infty_\#(Y))$. We write $u_\e \TscD u_0$.
\end{defi}
\begin{defi}\label{def:TwoScale}
Let $p,q \in (1,\infty)$ with $\frac{1}{p} + \frac{1}{q}=1$. We say that a sequence $u_\e$ in $L^p(\Omega)$ two-scale converges weakly to  $u_0 \in L^p(\Omega \times Y)$ if
\begin{align}\label{eq:defTwoScale}
\lim\limits_{\e\to 0}\intO u_\e(x) \varphi \xxeps dx = \intOY u_0\xy \varphi\xy dydx
\end{align}
for every $\varphi \in L^q(\Omega; C_\#(Y))$. We write $u_\e \TscW{p} u_0$.
\end{defi}

The main compactness result for the two-scale convergence is Theorem \ref{thm:T-SC-LpComp} (see \cite[Theorem 7]{LNW02}).
\begin{thm}\label{thm:T-SC-LpComp}
Let $p \in (1,\infty)$ and let $u_\e$ be a bounded sequence in $L^p(\Omega)$. Then, there exists a subsequence $\es$ and $u_0 \in L^p(\Omega \times Y)$ such that  $u_\es \TscW{p} u_0$.
\end{thm}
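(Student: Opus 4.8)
The plan is to deduce two-scale compactness from ordinary weak compactness in the reflexive space $L^p(\Omega\times Y)$ by means of the periodic unfolding operator $\T_\e$, using the properties of $\T_\e$ recalled in this section: $\T_\e$ maps $L^p(\Omega)$ boundedly into $L^p(\Omega\times Y)$ with $\norm{\T_\e\phi}{L^p(\Omega\times Y)}\le\norm{\phi}{L^p(\Omega)}$, it is supported on the union $\hat{\Omega}_\e$ of the $\e$-cells contained in $\Omega$, it satisfies the exact integration identity $\intOY\T_\e(\phi)\,\T_\e(\psi)\,dy\,dx=\int\limits_{\hat{\Omega}_\e}\phi\,\psi\,dx$ for $\phi\in L^p(\Omega)$ and $\psi\in L^q(\Omega)$, and $\T_\e(\varphi(\cdot,\cdot/\e))\to\varphi$ strongly in $L^q(\Omega\times Y)$ for every $\varphi\in L^q(\Omega;C_\#(Y))$. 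Granting these, the first step is immediate: by the norm estimate, $\T_\e(u_\e)$ is bounded in $L^p(\Omega\times Y)$, and since $1<p<\infty$ this space is reflexive; bounded sequences in reflexive spaces have weakly convergent subsequences, so there exist a subsequence $\es$ and a function $u_0\in L^p(\Omega\times Y)$ with $\T_{\es}(u_{\es})\rightharpoonup u_0$ weakly in $L^p(\Omega\times Y)$. This $u_0$ is the candidate two-scale limit.

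It then remains to show that weak convergence of the unfolded sequence forces the two-scale convergence $u_{\es}\TscW{p}u_0$. Fix $\varphi\in L^q(\Omega;C_\#(Y))$ and split
\[
\intO u_{\es}(x)\,\varphi\left(x,x/\es\right)dx=\int\limits_{\hat{\Omega}_{\es}}u_{\es}(x)\,\varphi\left(x,x/\es\right)dx+\int\limits_{\Omega\setminus\hat{\Omega}_{\es}}u_{\es}(x)\,\varphi\left(x,x/\es\right)dx.
\]
For the boundary-layer term, Hölder's inequality gives the bound $\norm{u_{\es}}{L^p(\Omega)}\,\norm{\varphi(\cdot,\cdot/\es)}{L^q(\Omega\setminus\hat{\Omega}_{\es})}$; since $x\mapsto\sup_{y\in Y}|\varphi(x,y)|$ lies in $L^q(\Omega)$ and $|\Omega\setminus\hat{\Omega}_{\es}|\to0$, the second factor tends to $0$, so this term vanishes. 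For the interior term, the integration identity yields $\int_{\hat{\Omega}_{\es}}u_{\es}\,\varphi(\cdot,\cdot/\es)\,dx=\intOY\T_{\es}(u_{\es})\,\T_{\es}\bigl(\varphi(\cdot,\cdot/\es)\bigr)\,dy\,dx$; here $\T_{\es}(\varphi(\cdot,\cdot/\es))\to\varphi$ strongly in $L^q(\Omega\times Y)$, so --- being the pairing of a weakly convergent sequence in $L^p$ with a strongly convergent one in $L^q$ --- the right-hand side converges to $\intOY u_0\xy\,\varphi\xy\,dy\,dx$. Adding the two contributions gives $\intO u_{\es}\varphi(\cdot,\cdot/\es)\,dx\to\intOY u_0\,\varphi\,dy\,dx$, which is exactly $u_{\es}\TscW{p}u_0$.

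The only genuinely non-routine inputs are the quoted properties of $\T_\e$ --- the norm estimate with the exact interior integration identity, and the strong convergence $\T_\e(\varphi(\cdot,\cdot/\e))\to\varphi$ for admissible $\varphi$ --- everything else being reflexivity of $L^p(\Omega\times Y)$, Hölder's inequality, and the weak--strong pairing; within the argument itself the bookkeeping for the boundary layer $\Omega\setminus\hat{\Omega}_\e$ is the one mildly delicate point, although it contributes nothing in the limit. If one prefers to avoid the unfolding operator, the classical route gives the same conclusion: pick a countable dense family in $L^q(\Omega;C_\#(Y))$, extract by a diagonal argument a subsequence along which $\intO u_{\es}\varphi(\cdot,\cdot/\es)\,dx$ converges for each member, bound the limiting linear functional via $\limsup_{\e\to0}\norm{\varphi(\cdot,\cdot/\e)}{L^q(\Omega)}\le\norm{\varphi}{L^q(\Omega\times Y)}$, represent it by some $u_0\in L^p(\Omega\times Y)$ using $(L^q(\Omega\times Y))'=L^p(\Omega\times Y)$, and extend the convergence from the dense family to all admissible $\varphi$ by density together with the uniform bound.
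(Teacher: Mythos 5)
Your argument is essentially correct, but note first that the paper does not prove this theorem at all: it is quoted from \cite{LNW02} (Theorem 7 there), whose proof is precisely the diagonal-extraction/Riesz-representation argument you sketch in your closing sentences. Your primary route through the unfolding operator is therefore a genuinely different derivation, and it is very much in the spirit of what the paper builds immediately afterwards: once the isometry of $\T_\e$ (Theorem \ref{thm:NormEqualityUnfold}) and the equivalence of Theorem \ref{thm:T-SC-Equi-Unf}\eqref{item:WeakEqui} are in place, the compactness statement is literally reflexivity of $L^p(\Omega \times Y)$ applied to the bounded sequence $\T_\e(u_\e)$, which is what your first step does. What the unfolding route buys is that no separate representation theorem or density bookkeeping for the limit functional is needed; what the classical route buys is independence from the unfolding machinery. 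One cosmetic warning: the operator you describe is the Cioranescu--Damlamian--Griso version (extension by $0$ off the whole cells), whereas the paper's $\T_\e$ sets $\T_\e(\varphi)=\varphi$ on $\Lambda_\e \times Y$ and is an exact isometry, so with the paper's conventions the integration identity holds over all of $\Omega$ and your boundary-layer term moves (harmlessly) into the convergence of the unfolded test function; also, what you call $\hat{\Omega}_\e$ is the paper's $\tilde{\Omega}_\e$ --- the symbol $\hOe$ is reserved in Section \ref{sec:TrafoMethod} for the perforated domain.

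The one substantive point to flag is the input you ``grant'' that this section does not actually supply: the strong convergence $\T_\e\big(\varphi(\cdot_x,\cdot_x/\e)\big) \to \varphi$ in $L^q(\Omega \times Y)$ for \emph{every} $\varphi \in L^q(\Omega;C_\#(Y))$. The paper proves this (inside the proof of Theorem \ref{thm:T-SC-Equi-Unf}) only for $\varphi \in D(\Omega;C^\infty_\#(Y))$, where pointwise convergence plus domination suffices; for $\varphi$ merely $L^q$ in the macroscopic variable the pointwise argument breaks down (the shift $x \mapsto [x]_{\e,Y}+\e y$ does not give a.e.\ convergence for an $L^q$ function), and one needs continuity of translation in $L^q$ together with a density argument. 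The statement is true, but the cheaper repair is to test only against smooth $\varphi$, conclude $u_\es \TscD u_0$, and then invoke Proposition \ref{prop:T-SC-SmoothTestF} together with the boundedness you already have to upgrade to $u_\es \TscW{p} u_0$. There is no circularity in doing so, since neither Theorem \ref{thm:T-SC-Equi-Unf} nor Proposition \ref{prop:T-SC-SmoothTestF} relies on Theorem \ref{thm:T-SC-LpComp}.
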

This compactness result can be improved for sequences of weakly differentiable functions by the following two standard two-scale compactness results.
\begin{thm}\label{thm:T-SC-W1pComp}
Let $p \in (1,\infty)$ and let $u_\e$ be a bounded sequence in $W^{1,p}(\Omega)$. Then,
there exists a subsequence $\es$, $u_0 \in W^{1,p}(\Omega)$ and $u_1 \in L^p(\Omega;W^{1,p}(Y)/\R)$ such that $u_\es \TscW{p}u_0$ and $\nabla u_\es \TscW{p} \nabla_x u_0 + \nabla_y u_1$.
\end{thm}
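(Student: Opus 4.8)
The plan is to prove Theorem \ref{thm:T-SC-W1pComp} as a consequence of the plain two-scale compactness result (Theorem \ref{thm:T-SC-LpComp}) applied to $u_\e$ and its gradient, followed by an identification of the structure of the two-scale limit of $\nabla u_\e$. Since $u_\e$ is bounded in $W^{1,p}(\Omega)$, both $u_\e$ and $\nabla u_\e$ are bounded in $L^p$, so after passing to a common subsequence $\es$ we obtain $u_\es \TscW{p} u_0$ for some $u_0 \in L^p(\Omega\times Y)$ and $\nabla u_\es \TscW{p} \xi_0$ for some $\xi_0 \in L^p(\Omega\times Y)^N$. By reflexivity we may simultaneously assume $u_\es \rightharpoonup u^*$ and $\nabla u_\es \rightharpoonup \nabla u^*$ weakly in $L^p$; a short computation with test functions $\varphi(x)$ independent of the fast variable shows that the two-scale limit $u_0$ cannot depend on $y$ and in fact $u_0(x,y) = u^*(x) =: u_0(x)$, so $u_0 \in W^{1,p}(\Omega)$.

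The core step is to show $\xi_0(x,y) = \nabla_x u_0(x) + \nabla_y u_1(x,y)$ for some $u_1 \in L^p(\Omega; W^{1,p}_\#(Y)/\R)$. First I would test the two-scale convergence of $\nabla u_\es$ against divergence-free oscillating test functions: take $\psi \in D(\Omega; C^\infty_\#(Y)^N)$ with $\operatorname{div}_y \psi(x,y) = 0$, and integrate by parts in
\begin{equation*}
\intO \nabla u_\es(x)\cdot \psi\xxeps dx = -\intO u_\es(x)\left(\operatorname{div}_x \psi\right)\xxeps dx - \frac1\es \intO u_\es(x)\left(\operatorname{div}_y \psi\right)\xxeps dx.
\end{equation*}
The last term vanishes identically by the divergence-free condition, and passing to the limit gives $\intOY \xi_0(x,y)\cdot\psi(x,y)\,dy\,dx = -\intOY u_0(x)\operatorname{div}_x\psi(x,y)\,dy\,dx = \intOY \nabla_x u_0(x)\cdot\psi(x,y)\,dy\,dx$, using $\int_Y \operatorname{div}_y\psi\,dy = 0$ by periodicity for the $x$-derivative term's $y$-average. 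Hence $\xi_0 - \nabla_x u_0$ is $L^2$-orthogonal (tested in $L^q$) to all smooth divergence-free periodic fields, so it is orthogonal to the closure of such fields; by the classical Helmholtz-type decomposition on the torus, the orthogonal complement of divergence-free fields in $L^p_\#(Y)^N$ is exactly the space of gradients $\nabla_y W^{1,p}_\#(Y)$. This yields, for a.e.\ $x$, a function $u_1(x,\cdot) \in W^{1,p}_\#(Y)/\R$ with $\xi_0(x,y) - \nabla_x u_0(x) = \nabla_y u_1(x,y)$; measurability and $L^p$-integrability of $x \mapsto u_1(x,\cdot)$ follow from a measurable-selection argument together with the Poincaré--Wirtinger inequality on $Y$, which controls $\norm{u_1(x,\cdot)}{W^{1,p}(Y)/\R}$ by $\norm{\nabla_y u_1(x,\cdot)}{L^p(Y)} \le \norm{\xi_0(x,\cdot)}{L^p(Y)} + \norm{\nabla_x u_0(x)}{}$, the right-hand side being in $L^p(\Omega)$.

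I expect the main obstacle to be the rigorous justification of the orthogonal-decomposition step in the $L^p$ (rather than $L^2$) setting and its measurable dependence on $x$: one must know that the annihilator of divergence-free periodic test fields inside $L^p_\#(Y)^N$ is precisely $\overline{\nabla_y C^\infty_\#(Y)} = \nabla_y W^{1,p}_\#(Y)$, and then extract $u_1$ as a genuine $L^p(\Omega; W^{1,p}_\#(Y)/\R)$ function rather than merely a pointwise family. Both are standard — the first is a duality/Hahn--Banach argument on the torus together with the characterization of gradients via vanishing of all periodic divergence-free pairings, the second a measurability argument — but they are the only non-routine ingredients; everything else is manipulation of the defining integral identities and integration by parts. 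An alternative, which I would mention as a remark, is to avoid the $L^p$ decomposition entirely by invoking the unfolding operator from Section \ref{sec:TwoScale-Unfolding}: unfolding $\nabla u_\es$ and using that $\mathcal{T}_\e(\nabla u_\es) = \nabla_y(\mathcal{T}_\e u_\es)/\e$ up to boundary terms converts the statement into a weak-$W^{1,p}$ compactness statement for the unfolded sequence on $\Omega\times Y$, from which the split form of the limit is immediate.
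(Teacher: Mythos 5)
The paper itself does not prove this theorem: it is quoted as one of the ``standard two-scale compactness results'' (going back to \cite{All92}, \cite{LNW02}), so your proposal has to be measured against the classical argument --- which is indeed the route you take: separate two-scale compactness for $u_\e$ and $\nabla u_\e$, identification of the gradient limit by testing against $y$-divergence-free oscillating fields, and a Helmholtz/de~Rham decomposition on the torus to produce $u_1$. That skeleton is correct, and your remarks on the $L^p$ annihilator and on measurable selection flag the right technical points.

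There is, however, one step whose stated justification fails: the claim that ``a short computation with test functions $\varphi(x)$ independent of the fast variable shows that the two-scale limit $u_0$ cannot depend on $y$''. Testing \eqref{eq:defTwoScale} with $y$-independent $\varphi$ only identifies the $y$-average $\intY u_0(\cdot,y)\,dy = u^*$ (this is exactly Proposition \ref{prop:T-SC-Bound}); such test functions are blind to any $y$-dependence of $u_0$. The $y$-independence genuinely requires the gradient bound: for arbitrary $\psi \in D(\Omega;C^\infty_\#(Y))^N$ one writes
\begin{equation*}
\es\intO \nabla u_\es(x)\cdot\psi\left(x,\tfrac{x}{\es}\right)dx \;=\; -\intO u_\es(x)\,\bigl(\es\operatorname{div}_x\psi + \operatorname{div}_y\psi\bigr)\left(x,\tfrac{x}{\es}\right)dx,
\end{equation*}
lets $\es\to 0$ (the left-hand side vanishes because $\nabla u_\es$ is bounded in $L^p(\Omega)$), and concludes $\intOY u_0\operatorname{div}_y\psi\,dy\,dx = 0$, i.e.\ $\nabla_y u_0 = 0$ and hence $u_0 = u^* \in W^{1,p}(\Omega)$. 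This is not a cosmetic repair: your subsequent integration by parts in $x$, turning $-\intOY u_0\operatorname{div}_x\psi\,dy\,dx$ into $\intOY \nabla_x u_0\cdot\psi\,dy\,dx$, already presupposes that $u_0$ is a function of $x$ alone lying in $W^{1,p}(\Omega)$. Once this step is inserted, the remainder of your argument (and the unfolding alternative you sketch, modulo the cells in $\Lambda_\e$ where $\T_\e(\nabla u_\e) \neq \tfrac{1}{\e}\nabla_y\T_\e(u_\e)$) is the standard and correct proof.
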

\begin{thm}\label{thm:T-SC-W1pComp2} 
Let $p \in (1,\infty)$ and let $u_\e$ be a sequence in $W^{1,p}(\Omega)$ such that \mbox{$\norm{u_\e}{L^p(\Omega)} \! + \! \e\norm{\nabla u_\e}{L^p(\Omega)} \!\leq C$}. Then, there exists a subsequence $\es$ and $u_0 \in L^p(\Omega;W^{1,p}_\#(Y))$ such that $u_\es \TscW{p} u_0$ and $\nabla u_\es \TscW{p}\nabla_y u_0$.
\end{thm}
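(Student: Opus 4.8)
The plan is to run the classical Nguetseng--Allaire argument: first extract a two-scale limit for the bounded sequence $\e\nabla u_\e$, then identify it as a $y$-gradient of $u_0$ by integrating by parts against oscillating test functions. Concretely, since $\norm{u_\e}{L^p(\Omega)}\le C$, Theorem \ref{thm:T-SC-LpComp} provides a subsequence $\es$ and $u_0\in L^p(\Omega\times Y)$ with $u_\es\TscW{p}u_0$. Since in addition $\e\norm{\nabla u_\e}{L^p(\Omega)}\le C$, applying Theorem \ref{thm:T-SC-LpComp} componentwise to $\e\nabla u_\e$ and passing to a further subsequence yields some $\xi_0\in L^p(\Omega\times Y)^N$ with $\e\nabla u_\es\TscW{p}\xi_0$.

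The core step is identifying $\xi_0=\nabla_y u_0$. Fix $\varphi\in D(\Omega;C^\infty_\#(Y))^N$; then $x\mapsto\varphi(x,x/\es)$ is smooth with compact support in $\Omega$, so integration by parts in $x$ (no boundary term) together with $\partial_{x_i}\bigl[\varphi_i(x,x/\es)\bigr]=(\partial_{x_i}\varphi_i)(x,x/\es)+\es^{-1}(\partial_{y_i}\varphi_i)(x,x/\es)$ gives
\begin{equation*}
\int_\Omega \es\nabla u_\es(x)\cdot\varphi(x,x/\es)\,dx=-\int_\Omega u_\es(x)\Bigl[\es\,(\mathrm{div}_x\varphi)(x,x/\es)+(\mathrm{div}_y\varphi)(x,x/\es)\Bigr]dx.
\end{equation*}
Letting $\es\to0$, the left-hand side tends to $\int_\Omega\int_Y\xi_0\cdot\varphi\,dy\,dx$ by the two-scale convergence of $\es\nabla u_\es$; on the right the first term is $O(\es)$ since $u_\es$ is $L^p$-bounded and $\mathrm{div}_x\varphi$ is bounded, while the second tends to $-\int_\Omega\int_Y u_0\,\mathrm{div}_y\varphi\,dy\,dx$. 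Hence $\int_\Omega\int_Y\xi_0\cdot\varphi\,dy\,dx=-\int_\Omega\int_Y u_0\,\mathrm{div}_y\varphi\,dy\,dx$ for all such $\varphi$. Localising (taking $\varphi(x,y)=\theta(x)\psi(y)e_i$ with $\theta\in D(\Omega)$, $\psi\in C^\infty_\#(Y)$) shows that for a.e.\ $x$ the function $u_0(x,\cdot)$ has distributional $y$-gradient $\xi_0(x,\cdot)\in L^p(Y)^N$; since only $Y$-periodic test functions enter, the $Y$-periodic extension of $u_0(x,\cdot)$ lies in $W^{1,p}_{\mathrm{loc}}(\R^N)$, i.e.\ $u_0(x,\cdot)\in W^{1,p}_\#(Y)$ with $\nabla_y u_0=\xi_0$.

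It remains to upgrade to the Bochner-space statement: from $u_0,\nabla_y u_0\in L^p(\Omega\times Y)$ and separability of $W^{1,p}_\#(Y)$ one gets that $x\mapsto u_0(x,\cdot)$ is measurable, so $u_0\in L^p(\Omega;W^{1,p}_\#(Y))$, and $\es\nabla u_\es\TscW{p}\nabla_y u_0$ as claimed. I do not anticipate a serious obstacle here; this is the standard second two-scale compactness theorem, and the only genuine points are the integration-by-parts identification of $\xi_0$ and the routine measurability argument. (Alternatively, the statement follows immediately from the unfolding operator: $\mathcal{T}_\es(\es\nabla u_\es)=\nabla_y\mathcal{T}_\es(u_\es)$ shows $\mathcal{T}_\es(u_\es)$ is bounded in $L^p(\Omega;W^{1,p}(Y))$, so a weakly convergent subsequence has a limit $u_0$ with the required properties, and weak two-scale convergence is exactly weak $L^p$-convergence of the unfolded sequences.)
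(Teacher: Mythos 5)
The paper does not prove Theorem \ref{thm:T-SC-W1pComp2} at all --- it is stated as one of the ``standard two-scale compactness results'' and implicitly deferred to \cite{All92}, \cite{LNW02}. Your argument is the classical Nguetseng--Allaire proof and is correct: the integration-by-parts identity, the $O(\es)$ disposal of the $\mathrm{div}_x$ term, the identification of $\xi_0$ as the distributional $y$-gradient, and the observation that testing only against $Y$-periodic $\psi$ forces the periodic extension of $u_0(x,\cdot_y)$ into $W^{1,p}_{\mathrm{loc}}(\R^N)$ are exactly the genuine points, and you address each. One small remark on your parenthetical unfolding alternative: with this paper's modified operator (which sets $\T_\e(\varphi)\xy=\varphi(x)$ on $\Lambda_\e\times Y$ rather than $0$), the identity $\T_\es(\es\nabla u_\es)=\nabla_y\T_\es(u_\es)$ holds only on $\tilde\Omega_\es\times Y$; since $|\Lambda_\es|\to0$ this does not affect the limit, but it should be said if that route is taken.
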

Testing \eqref{eq:defTwoScale} with functions $\varphi \in L^q(\Omega)$ shows a relation between the weak two-scale convergence and the weak convergence in $L^p(\Omega)$.
\begin{prop}\label{prop:T-SC-Bound}
Let $p \in (1,\infty)$. Let $u_\e$ be a sequence in $L^p(\Omega)$ and $u_0 \in L^p(\Omega \times Y)$ such that $u_\e \TscW{p} u_0$. Then, $u_\e$ converges weakly in $L^p(\Omega)$ to $u(\cdot_x) = \intY u_0(\cdot_x,y) dy$ and the sequence $u_\e$ is bounded in $L^p(\Omega)$.
\end{prop}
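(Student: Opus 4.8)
The plan is to deduce the weak-$L^p$ convergence directly from the defining relation \eqref{eq:defTwoScale} by restricting the admissible test functions to those that do not depend on the fast variable. First I would observe that any $\psi \in L^q(\Omega)$ can be regarded as an element of $L^q(\Omega; C_\#(Y))$ via the constant-in-$y$ embedding $\varphi\xy \coloneqq \psi(x)$; this is legitimate because $C_\#(Y)$ contains the constants and $\norm{\varphi\xy}{C_\#(Y)} = |\psi(x)|$, so $\varphi \in L^q(\Omega;C_\#(Y))$ with the same norm. Plugging this $\varphi$ into \eqref{eq:defTwoScale}, the left-hand side becomes $\intO u_\e(x)\psi(x)\,dx$ and the right-hand side becomes $\intOY u_0\xy \psi(x)\,dy\,dx = \intO \left(\intY u_0\xy\,dy\right)\psi(x)\,dx$ by Fubini. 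Hence $\intO u_\e(x)\psi(x)\,dx \to \intO u(x)\psi(x)\,dx$ for every $\psi \in L^q(\Omega)$, where $u(x) \coloneqq \intY u_0(x,y)\,dy$; this is exactly weak convergence $u_\e \rightharpoonup u$ in $L^p(\Omega)$, provided we first check $u \in L^p(\Omega)$, which follows from Jensen's inequality: $\intO |u(x)|^p\,dx \leq \intO \intY |u_0\xy|^p\,dy\,dx = \norm{u_0}{L^p(\Omega\times Y)}^p < \infty$ since $|Y| = 1$.

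For the boundedness of $u_\e$ in $L^p(\Omega)$, I would invoke the uniform boundedness principle: the functionals $\Lambda_\e \colon \psi \mapsto \intO u_\e \psi\,dx$ on the Banach space $L^q(\Omega)$ are pointwise bounded (each sequence $(\Lambda_\e(\psi))_\e$ converges, hence is bounded, for every fixed $\psi$), so by Banach--Steinhaus they are uniformly bounded in operator norm; but $\norm{\Lambda_\e}{(L^q)'} = \norm{u_\e}{L^p(\Omega)}$ by duality, giving the claim. Alternatively one can note that a weakly convergent sequence in a normed space is automatically norm-bounded, so the boundedness is a free consequence of the weak convergence established above.

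I do not expect any genuine obstacle here; the only point requiring a little care is the justification that the constant-in-$y$ function lies in the test space $L^q(\Omega;C_\#(Y))$ and that Fubini's theorem applies to collapse the $Y$-integral, both of which are immediate from $u_0 \in L^p(\Omega\times Y)$, $\psi \in L^q(\Omega)$ and $|Y| = 1$. The use of Banach--Steinhaus (or the elementary fact that weak convergence implies boundedness) disposes of the last assertion with no further work.
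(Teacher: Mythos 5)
Your argument is correct and is precisely the one the paper intends: the proposition is introduced by the remark that it follows from ``testing \eqref{eq:defTwoScale} with functions $\varphi \in L^q(\Omega)$'', i.e.\ the constant-in-$y$ embedding you use, with the boundedness then coming from Banach--Steinhaus. The extra checks you supply (that constants lie in $C_\#(Y)$, Fubini, and $u\in L^p(\Omega)$ via Jensen) are exactly the routine details the paper leaves implicit.
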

On the other hand, if the sequence is bounded, the set of test functions can be reduced to smooth and, in a certain way, dense test functions, similar to the weak $L^p$-convergence (cf.~\cite[Proposition 1]{LNW02})
\begin{prop}\label{prop:T-SC-SmoothTestF}
Let $p \in (1,\infty)$. Let $u_\e$ be a bounded sequence in $L^p(\Omega)$ and $u_0 \in L^p(\Omega \times Y)$ such that $u_\e \TscD u_0$, then $u_\e \TscW{p} u_0$.
\end{prop}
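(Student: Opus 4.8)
The goal is to show that if a bounded sequence $u_\e$ in $L^p(\Omega)$ satisfies the distributional two-scale convergence $u_\e \TscD u_0$ (i.e.~\eqref{eq:T-Sc-D} holds for all $\varphi \in D(\Omega;C^\infty_\#(Y))$), then in fact \eqref{eq:defTwoScale} holds for all $\varphi \in L^q(\Omega;C_\#(Y))$, so that $u_\e \TscW{p} u_0$. The plan is a standard density-plus-uniform-equicontinuity argument. First I would fix $\varphi \in L^q(\Omega;C_\#(Y))$ and, for $\eta>0$, choose $\psi \in D(\Omega;C^\infty_\#(Y))$ with $\norm{\varphi-\psi}{L^q(\Omega;C_\#(Y))} < \eta$; such $\psi$ exists because $D(\Omega;C^\infty_\#(Y))$ is dense in $L^q(\Omega;C_\#(Y))$. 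Then I would split
\begin{align*}
\intO u_\e(x)\varphi\xxeps dx - \intOY u_0\xy\varphi\xy dydx
&= \intO u_\e(x)\bigl(\varphi-\psi\bigr)\xxeps dx \\
&\quad + \left(\intO u_\e(x)\psi\xxeps dx - \intOY u_0\xy\psi\xy dydx\right)\\
&\quad + \intOY u_0\xy\bigl(\psi-\varphi\bigr)\xy dydx.
\end{align*}
The middle term tends to $0$ as $\e\to0$ by the hypothesis $u_\e\TscD u_0$ applied to $\psi$.

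For the first term I would use the key estimate that for $w \in L^q(\Omega;C_\#(Y))$ one has $\norm{x\mapsto w\xxeps}{L^q(\Omega)} \le C\,\norm{w}{L^q(\Omega;C_\#(Y))}$, with $C$ independent of $\e$ (this is the elementary bound $\int_\Omega |w(x,x/\e)|^q dx \le \int_\Omega \sup_{y\in Y}|w(x,y)|^q dx$, for $w$ continuous in $y$; for general $w\in L^q(\Omega;C_\#(Y))$ it follows by density of, say, $C(\overline\Omega;C_\#(Y))$, noting that $x\mapsto w(x,x/\e)$ is measurable — this last measurability point is the only mildly technical bit). Combining this with Hölder's inequality and the uniform bound $\norm{u_\e}{L^p(\Omega)}\le M$ gives
\[
\left|\intO u_\e(x)\bigl(\varphi-\psi\bigr)\xxeps dx\right| \le M\,\norm{x\mapsto(\varphi-\psi)\xxeps}{L^q(\Omega)} \le C M \,\norm{\varphi-\psi}{L^q(\Omega;C_\#(Y))} < C M \eta,
\]
uniformly in $\e$. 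For the third term, Hölder's inequality on $\Omega\times Y$ gives the bound $\norm{u_0}{L^p(\Omega\times Y)}\,\norm{\psi-\varphi}{L^q(\Omega\times Y)} \le \norm{u_0}{L^p(\Omega\times Y)}\,\norm{\psi-\varphi}{L^q(\Omega;C_\#(Y))} < \norm{u_0}{L^p(\Omega\times Y)}\,\eta$, since the $L^q(\Omega;C_\#(Y))$-norm dominates the $L^q(\Omega\times Y)$-norm. Hence
\[
\limsup_{\e\to0}\left|\intO u_\e(x)\varphi\xxeps dx - \intOY u_0\xy\varphi\xy dydx\right| \le \bigl(CM + \norm{u_0}{L^p(\Omega\times Y)}\bigr)\eta,
\]
and letting $\eta\to0$ yields \eqref{eq:defTwoScale} for arbitrary $\varphi \in L^q(\Omega;C_\#(Y))$, which is exactly $u_\e\TscW{p}u_0$.

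The main obstacle is not any single hard estimate but rather making the two auxiliary facts precise: (i) the density of $D(\Omega;C^\infty_\#(Y))$ in $L^q(\Omega;C_\#(Y))$, and (ii) the measurability of $x\mapsto w(x,x/\e)$ together with the uniform-in-$\e$ norm bound $\norm{x\mapsto w\xxeps}{L^q(\Omega)}\le C\norm{w}{L^q(\Omega;C_\#(Y))}$. Both are classical in the two-scale literature (they underpin the very definition of admissible test functions, cf.~\cite{All92}, \cite{LNW02}), so I would cite them or dispatch them quickly by reducing to continuous-in-$x$ functions via density; once these are in hand the proposition follows immediately from the three-term splitting above.
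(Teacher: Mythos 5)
Your argument is correct and complete (modulo the two standard facts you explicitly flag, namely the density of $D(\Omega;C^\infty_\#(Y))$ in $L^q(\Omega;C_\#(Y))$ and the measurability and uniform $L^q$-bound of $x\mapsto w(x,x/\e)$, both of which are classical for admissible test functions). The paper itself gives no proof of this proposition --- it simply cites \cite[Proposition 1]{LNW02} --- and your three-term density splitting is precisely the standard argument found there, so there is nothing to correct or compare further.
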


\color{black} 
One desirable property of the two-scale convergence is that the product and the limit process commute (i.e.~$u_\e v_\e \to u_0v_0$). The well known fact that in uniformly convex Banach spaces the weak convergence plus the convergence of the norms are equivalent to the strong convergence motivates the following definition of the strong two-scale convergence.
\begin{defi}
Let $p \in (1,\infty)$. We say that a sequence $u_\e$ in $L^p(\Omega)$ two-scale converges strongly to $u_0 \in L^p(\Omega \times Y)$ if $u_\e \TscW{p}u_0$ and $\lim\limits_{\e \to 0} \norm{u_\e}{L^p(\Omega)} = \norm{u_0}{L^p(\Omega)}$. We write $u_\e \TscS{p} u_0$.
\end{defi}
\begin{thm}\label{thm:T-SC-Prod}
Let $1 <p,q,r< \infty$ with $\frac{1}{p}+ \frac{1}{q} = \frac{1}{r}$. Let $u_\e$ be a sequence in $L^p(\Omega)$ which two-scale converges strongly to $u_0 \in L^p(\Omega \times Y)$ and let $v_\e$ be a sequence in $L^q(\Omega)$ which  two-scale converges weakly (resp.~strongly) to $v_0 \in L^q(\Omega \times Y)$. Then, $u_\e v_\e$ is a sequence of functions in $L^r(\Omega)$ which two-scale converges weakly (resp.~strongly) to $u_0 v_0 \in L^r(\Omega \times Y)$.
\end{thm}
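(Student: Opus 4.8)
The plan is to prove the two-scale product theorem by reducing it to the analogous statement for the unfolding operator, exploiting the fact (recalled in Section~\ref{sec:TwoScale-Unfolding}) that two-scale convergence is equivalent to weak/strong convergence of the unfolded sequences in $L^p(\Omega \times Y)$, where the classical H\"older and strong/weak product structure is available. First I would recall that $u_\e \TscW{p} u_0$ is equivalent to $\mathcal{T}_\e u_\e \rightharpoonup u_0$ weakly in $L^p(\Omega \times Y)$ (up to a vanishing boundary-layer term), and that $u_\e \TscS{p} u_0$ additionally forces $\|\mathcal{T}_\e u_\e\|_{L^p(\Omega\times Y)} \to \|u_0\|_{L^p(\Omega\times Y)}$; since $L^p(\Omega\times Y)$ is uniformly convex, this upgrades to strong convergence $\mathcal{T}_\e u_\e \to u_0$ in $L^p(\Omega\times Y)$. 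The unfolding operator is multiplicative, $\mathcal{T}_\e(u_\e v_\e) = (\mathcal{T}_\e u_\e)(\mathcal{T}_\e v_\e)$, so the problem becomes: strong $L^p$-convergence times weak (resp. strong) $L^q$-convergence implies weak (resp. strong) $L^r$-convergence of the product, with $\tfrac1p+\tfrac1q=\tfrac1r$.

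For the weak case, I would argue as follows: given $\psi \in L^{r'}(\Omega\times Y)$ (with $\tfrac1r+\tfrac1{r'}=1$), write $\int (\mathcal{T}_\e u_\e)(\mathcal{T}_\e v_\e)\psi = \int u_0 (\mathcal{T}_\e v_\e)\psi + \int (\mathcal{T}_\e u_\e - u_0)(\mathcal{T}_\e v_\e)\psi$. The second term is bounded by $\|\mathcal{T}_\e u_\e - u_0\|_{L^p}\,\|\mathcal{T}_\e v_\e\|_{L^q}\,\|\psi\|_{L^{r'}}$ by the generalised H\"older inequality with exponents $p, q, r'$ (note $\tfrac1p+\tfrac1q+\tfrac1{r'} = \tfrac1r + \tfrac1{r'} = 1$), and this tends to $0$ since $\mathcal{T}_\e u_\e \to u_0$ strongly and $\mathcal{T}_\e v_\e$ is bounded in $L^q$. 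For the first term, $u_0\psi \in L^{q'}(\Omega\times Y)$ again by H\"older (since $\tfrac1p + \tfrac1{r'} = \tfrac1{q'}$), so $\int u_0 \psi\, \mathcal{T}_\e v_\e \to \int u_0 \psi\, v_0$ by the weak convergence of $\mathcal{T}_\e v_\e$. This shows $\mathcal{T}_\e(u_\e v_\e) \rightharpoonup u_0 v_0$ weakly in $L^r(\Omega\times Y)$, which translates back to $u_\e v_\e \TscW{r} u_0 v_0$; in particular $u_\e v_\e \in L^r(\Omega)$ for $\e$ small (boundedness follows from Proposition~\ref{prop:T-SC-Bound} or directly from the uniform bounds on $\mathcal{T}_\e u_\e$, $\mathcal{T}_\e v_\e$).

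For the strong case, once weak two-scale convergence $u_\e v_\e \TscW{r} u_0 v_0$ is established, it suffices to show $\|u_\e v_\e\|_{L^r(\Omega)} \to \|u_0 v_0\|_{L^r(\Omega\times Y)}$, i.e.\ $\|\mathcal{T}_\e(u_\e v_\e)\|_{L^r(\Omega\times Y)} \to \|u_0 v_0\|_{L^r}$ (the unfolding operator is an $L^r$-isometry up to the vanishing boundary layer). Now $\mathcal{T}_\e u_\e \to u_0$ strongly in $L^p$ and $\mathcal{T}_\e v_\e \to v_0$ strongly in $L^q$; the map $L^p \times L^q \to L^r$, $(f,g)\mapsto fg$, is continuous (again by H\"older: $\|fg - f_0g_0\|_{L^r} \le \|f - f_0\|_{L^p}\|g\|_{L^q} + \|f_0\|_{L^p}\|g-g_0\|_{L^q}$), so $\mathcal{T}_\e(u_\e v_\e) = (\mathcal{T}_\e u_\e)(\mathcal{T}_\e v_\e) \to u_0 v_0$ strongly in $L^r(\Omega\times Y)$, which gives the norm convergence and hence $u_\e v_\e \TscS{r} u_0 v_0$.

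The main obstacle I anticipate is bookkeeping the passage between the two-scale formulation and the unfolding formulation cleanly — in particular handling the boundary-layer set $\Omega \setminus \hat\Omega_\e$ where the unfolding operator is not exactly an isometry, and making sure the equivalences ``strong two-scale $\Leftrightarrow$ strong $L^p$ of the unfolded sequence'' are available in the form stated (this should follow from the results recapped in Section~\ref{sec:TwoScale-Unfolding} together with uniform convexity of $L^p(\Omega\times Y)$). If one prefers to avoid unfolding entirely, the same scheme works directly with test functions $\varphi \in D(\Omega; C^\infty_\#(Y))$: approximate $u_0$ in $L^p(\Omega\times Y)$ by such functions, use the strong two-scale convergence of $u_\e$ to replace $u_\e$ by (the $\e$-periodic sampling of) this approximation with an error controlled in $L^p(\Omega)$ via the definition of strong two-scale convergence, and then use the weak two-scale convergence of $v_\e$ against the resulting admissible test function; but the unfolding route is shorter and I would present that.
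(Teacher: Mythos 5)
Your proposal is correct and follows essentially the same route as the paper: translate both convergences via the unfolding operator (Theorem \ref{thm:T-SC-Equi-Unf}), use multiplicativity of $\T_\e$ together with H\"older estimates to pass to the limit of the product in $L^r(\Omega\times Y)$, and translate back. The boundary-layer caveat you raise is moot here, since the paper's modified definition of $\T_\e$ (setting $\T_\e(\varphi)=\varphi$ on $\Lambda_\e\times Y$) makes it an exact isometry by Theorem \ref{thm:NormEqualityUnfold}.
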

Using the unfolding operator $\T_\e$, which was introduced in \cite{CDG02}, two-scale convergence can be translated into convergence in $L^p(\Omega \times Y)$. Thus, we can give a brief proof of Theorem \ref{thm:T-SC-Prod} later. 

In order to simplify the proofs of the two-scale convergence method, we introduce the following notations.
\begin{notation}
Let $p \in (1,\infty]$. If $u_\e$ is a sequence in $L^q(\Omega)$ and $u_0 \in L^q(\Omega \times Y)$ for every $q\in (1,p)$ such that $u_\e \TscW{q} u_0$ for every $q\in (1,p)$, we write $u_\e \Tscw{p} u_0$. If additionally $u_\e \TscS{q} u_0$, we write $u_\e \Tscs{p} u_0$.
\end{notation}
%
\begin{notation}\label{def:Unfolding}
Let $\Omega \subset \R^N$ and $x = \sum\limits_{i = 1}^N x_i e_i \in \R^N$, where $e_i$ denotes the euclidean unit vectors, then let
\vspace{-0.3cm}
\begin{align*}
&[x]_Y \coloneqq \sum\limits_{i = 1}^N \lfloor x_i \rfloor e_i \ , \hspace{0.7cm}
\{x\}_Y \coloneqq x-[x]_Y \ , \hspace{0.7cm}
[x]_{\e,Y} \coloneqq \e\left[\xeps\right]_Y \ , \hspace{0.7cm}
\{x\}_{\e,Y} \coloneqq \left\{\xeps\right\}_Y \ ,\hspace{0.3cm}
\\
&I_\e \coloneqq \{k \in \e \Z^N, k + \e Y \subset \overline{\Omega}\}\ , \hspace{0.7cm}
\tilde{\Omega}_\e\coloneqq \operatorname{int}\Big( \bigcup\limits_{k \in I_\e} k + \e Y \Big)\ , \hspace{0.7cm}
\Lambda_\e = \Omega \setminus \tilde{\Omega}_\e.
\end{align*}	
\end{notation}
\begin{defi}
Let $1 \leq p \leq \infty$. We define the unfolding operator $\T_\e : L^p(\Omega) \rightarrow L^p(\Omega \times Y)$ by
\begin{align}
\T_\e(\varphi) \xy \coloneqq \begin{cases}
\varphi( [x]_{\e,Y} + \e y) &\textrm{ for a.e. }\xy \in \tilde{\Omega}_\e \times Y,
\\
\varphi(x) &\textrm{ for a.e. }\xy \in \Lambda_\e \times Y.
\end{cases}
\end{align}
\end{defi}
Note that we have defined $\T_\e(\varphi)\xy = \varphi(x)$ on the cells that are not completely included in $\Omega$ and not $\T_\e(\varphi)\xy = 0$ as in \cite{CDG08}. By this slight modification, $\T_\e$ becomes isometric (cf. Theorem \ref{thm:NormEqualityUnfold}). Thus, we can not only translate between the two-scale convergence of $u_\e$ and the weak convergence of $\T_\e(u_\e)$ in $L^p(\Omega \times Y)$, as shown in \cite{CDG08}, but we can also translate between the strong two-scale convergence and the strong convergence in $L^p(\Omega \times Y)$.
\begin{thm}\label{thm:NormEqualityUnfold}
Let $\varphi \in L^p(\Omega)$ for $1 \leq p \leq \infty$. Then 
\begin{align}\label{eq:EqualityIntegralT}
&\intOY \T_\e (\varphi)\xy dydx = \intO \varphi(x) dx,
\\\label{eq:EqualityNormT}
&\norm{\T_\e (\varphi)}{L^p(\Omega \times Y)}=\norm{\varphi}{L^p(\Omega)}.
\end{align}
\end{thm}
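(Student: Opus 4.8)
The plan is to reduce both identities to a single elementary change of variables performed cell by cell, after splitting $\Omega$ into the ``interior'' part $\tilde{\Omega}_\e$ and the boundary layer $\Lambda_\e = \Omega \setminus \tilde{\Omega}_\e$, on which $\T_\e$ acts trivially. First I would treat $\Lambda_\e \times Y$: there $\T_\e(\varphi)\xy = \varphi(x)$ is independent of $y$, so $\intY \T_\e(\varphi)\xy\,dy = |Y|\,\varphi(x) = \varphi(x)$ since $|Y| = 1$, and likewise for $|\varphi|^p$; hence the $\Lambda_\e$-contribution to the left-hand sides of \eqref{eq:EqualityIntegralT} and (the $p$-th power of) \eqref{eq:EqualityNormT} is exactly $\int_{\Lambda_\e}\varphi$, respectively $\int_{\Lambda_\e}|\varphi|^p$.

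Next I would handle $\tilde{\Omega}_\e \times Y$. The closed cells $\{k + \e Y : k \in I_\e\}$ cover $\tilde{\Omega}_\e$ with pairwise disjoint interiors, and $\tilde{\Omega}_\e$ differs from their union only by a Lebesgue-null set of cell faces, so it suffices to integrate over each $k + \e Y$ separately. For a.e.\ $x \in k + \e Y$ with $k \in I_\e \subset \e\Z^N$ one has $[x]_{\e,Y} = k$, hence $\T_\e(\varphi)\xy = \varphi(k + \e y)$, which does not depend on $x$. Therefore
\[
\int\limits_{k+\e Y}\intY \T_\e(\varphi)\xy\,dy\,dx = |k+\e Y|\intY \varphi(k + \e y)\,dy = \e^N \intY \varphi(k+\e y)\,dy = \int\limits_{k+\e Y}\varphi(z)\,dz,
\]
where the last equality is the substitution $z = k + \e y$, $dz = \e^N dy$. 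Summing over $k \in I_\e$ gives $\int_{\tilde{\Omega}_\e}\intY \T_\e(\varphi) = \int_{\tilde{\Omega}_\e}\varphi$, and adding the $\Lambda_\e$-term yields \eqref{eq:EqualityIntegralT}. For \eqref{eq:EqualityNormT} with $1 \le p < \infty$ I would note that $\T_\e$ is defined by pointwise substitution, so it commutes with $t \mapsto |t|^p$, i.e.\ $|\T_\e(\varphi)\xy|^p = \T_\e(|\varphi|^p)\xy$ a.e.; applying \eqref{eq:EqualityIntegralT} to the $L^1$-function $|\varphi|^p$ then gives $\norm{\T_\e(\varphi)}{L^p(\Omega\times Y)}^p = \intO |\varphi|^p\,dx = \norm{\varphi}{L^p(\Omega)}^p$. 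The case $p = \infty$ follows either directly, since the a.e.-defined map $\xy \mapsto [x]_{\e,Y} + \e y$ sends null sets to null sets and has a measurable essentially surjective inverse so the essential suprema agree, or by letting $p \to \infty$ in the identity just proved, using that $|\Omega \times Y| = |\Omega| < \infty$ so that $\norm{\cdot}{L^p} \to \norm{\cdot}{L^\infty}$ on both sides.

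The only point requiring care — and it is routine — is the measure-theoretic bookkeeping near cell boundaries: $[\,\cdot\,]_Y$ is ambiguous on the faces of the cubes $k + \e Y$, the union $\bigcup_{k\in I_\e}(k+\e Y)$ exceeds $\tilde{\Omega}_\e$ by a null set, and $\Lambda_\e$ collects the remaining sliver near $\partial\Omega$; one checks that all these discrepancies are Lebesgue-null, so that the decomposition $\Omega = \tilde{\Omega}_\e \cup \Lambda_\e$ (disjoint up to a null set) and the cellwise computation above are legitimate. Everything else is the scaling identity $|k + \e Y| = \e^N$ combined with $|Y| = 1$.
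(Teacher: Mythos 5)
Your proposal is correct and follows essentially the same route as the paper's own proof: split $\Omega$ into $\tilde{\Omega}_\e$ and $\Lambda_\e$, compute cell by cell using $[x]_{\e,Y}=k$ and the substitution $z=k+\e y$, then deduce \eqref{eq:EqualityNormT} from \eqref{eq:EqualityIntegralT} via $|\T_\e(\varphi)|^p=\T_\e(|\varphi|^p)$, with $p=\infty$ handled directly from the definition. The extra measure-theoretic bookkeeping you mention is sound but not needed beyond what the paper already implicitly assumes.
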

\begin{proof}
We split the integral on $\tilde{\Omega}_\e \times Y$ and $\Lambda_\e \times Y$ so that
\begin{align*}
\intOY \T_\e (\varphi)\xy dydx 
=
\sum\limits_{k \in I_\e}\int\limits_{k + \e Y} \intY \varphi([x]_{\e,Y} + \e y) dydx + \int\limits_{\Lambda_\e \times Y} \varphi(x) dx.
\end{align*}
Since $[x]_{\e,Y} = k$ on each cell $k + \e Y$, we obtain
\begin{align*}
\int\limits_{k + \e Y} \intY \varphi([x]_{\e,Y} + \e y) dydx
=
\int\limits_{k + \e Y} \intY \varphi(k + \e y) dydx
=
|\e Y| \intY \varphi(k + \e y) dy
=
\int\limits_{k + \e Y} \varphi(x) dx.
\end{align*}
Combining these two equations yields
\begin{align*}
\intOY \T_\e (\varphi)\xy dydx 
=
\sum\limits_{k \in I_\e}\int\limits_{k + \e Y} \varphi(x) dx + \int\limits_{\Lambda_\e} \varphi(x) dx
=
\intO \varphi(x) dx.
\end{align*}
Since $|\T_\e(\varphi)|^p= \T_\e(|\varphi|^p)$, \eqref{eq:EqualityNormT} follows for $p <\infty$ by applying \eqref{eq:EqualityIntegralT} to $|\varphi|^p$. For $p = \infty$, \eqref{eq:EqualityNormT} follows directly from the definition of $\T_\e$.
\end{proof}

\begin{thm}\label{thm:T-SC-Equi-Unf}
Let $u_\e$ be a sequence in $L^p(\Omega)$ and $u_0 \in L^p(\Omega \times Y)$ with $1 <p<\infty$. Then, the following statements hold:
\begin{enumerate}
\item \label{item:DistEqui} $u_\e \TscD u_0$ if and only if $\T_\e(u_\e) \varphi \rightarrow u_0 \varphi$  in $L^1(\Omega \times Y)$ for every $\varphi \in D(\Omega;C^\infty_\#(Y))$,
\item \label{item:WeakEqui} $u_\e \TscW{p} u_0$ if and only if $\T_\e(u_\e) \rightharpoonup u_0$  in $L^p(\Omega \times Y)$,
\item \label{item:StrongEqui} $u_\e \TscS{p} u_0$ if and only if $\T_\e(u_\e) \rightarrow u_0$  in $L^p(\Omega \times Y)$. 
\end{enumerate}
\end{thm}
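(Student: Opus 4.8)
The plan is to exploit the adjunction between $\T_\e$ and the "folding" operation $\varphi\xxeps$, using that the oscillating test functions are (up to a vanishing error near $\Lambda_\e$) exactly the unfoldings of their two-scale partners. Concretely, for $\varphi\in D(\Omega;C^\infty_\#(Y))$ one has $\T_\e\bigl(\varphi_\#(\cdot,\tfrac{\cdot}{\e})\bigr)\to\varphi$ strongly in $L^q(\Omega\times Y)$ and, moreover, using periodicity and \eqref{eq:EqualityIntegralT},
\begin{align}\label{eq:adjunction}
\intO u_\e(x)\,\varphi_\#\xxeps\,dx
=
\intOY \T_\e(u_\e)\xy\,\T_\e\bigl(\varphi_\#(\cdot,\tfrac{\cdot}{\e})\bigr)\xy\,dydx,
\end{align}
since $\T_\e$ is multiplicative ($\T_\e(fg)=\T_\e(f)\T_\e(g)$) and preserves the integral. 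For \eqref{item:DistEqui} I would start from \eqref{eq:adjunction}: if $\T_\e(u_\e)\varphi\to u_0\varphi$ in $L^1$ for all such $\varphi$, then writing $\T_\e(u_\e)\,\T_\e(\varphi_\#(\cdot,\tfrac\cdot\e)) = \bigl(\T_\e(u_\e)\varphi\bigr) + \T_\e(u_\e)\bigl(\T_\e(\varphi_\#(\cdot,\tfrac\cdot\e))-\varphi\bigr)$ and noting that $\norm{\T_\e(u_\e)}{L^p}=\norm{u_\e}{L^p}$ stays bounded while the second factor $\to 0$ in $L^q$, the right-hand side of \eqref{eq:adjunction} converges to $\intOY u_0\varphi$, which is exactly $u_\e\TscD u_0$. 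The converse is the same identity read backwards, again absorbing the $\T_\e(\varphi_\#(\cdot,\tfrac\cdot\e))-\varphi$ error, together with the fact that for fixed $\varphi$ the product $\T_\e(u_\e)\varphi$ is bounded in $L^1$ and testing against $L^\infty(\Omega\times Y)$ functions suffices to identify the $L^1$-weak-$*$ limit; but one must upgrade weak-$*$-$L^1$ to strong $L^1$ convergence of $\T_\e(u_\e)\varphi$, which uses that $\varphi$ is a fixed smooth function (so $\T_\e(u_\e)\varphi$ and $u_0\varphi$ can be compared after testing against a dense class and the norms match).

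For \eqref{item:WeakEqui}, I would argue: if $\T_\e(u_\e)\rightharpoonup u_0$ in $L^p(\Omega\times Y)$, then by \eqref{eq:adjunction} and strong convergence of the unfolded test functions, $u_\e\TscD u_0$; since $\norm{\T_\e(u_\e)}{L^p}=\norm{u_\e}{L^p}$ is bounded (a weakly convergent sequence is bounded), $u_\e$ is bounded in $L^p$, so Proposition~\ref{prop:T-SC-SmoothTestF} promotes $\TscD$ to $\TscW{p}$. Conversely, if $u_\e\TscW{p}u_0$, then $u_\e$ is bounded in $L^p$ by Proposition~\ref{prop:T-SC-Bound}, hence $\T_\e(u_\e)$ is bounded in $L^p(\Omega\times Y)$ by \eqref{eq:EqualityNormT}; extract a weakly convergent subsequence $\T_\es(u_\es)\rightharpoonup w$, apply the first implication to get $u_\es\TscW{p}w$, and conclude $w=u_0$ by uniqueness of the two-scale limit; since every subsequence has a further subsequence with $\T_\e(u_\e)\rightharpoonup u_0$, the whole sequence converges weakly.

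For \eqref{item:StrongEqui}, combine \eqref{item:WeakEqui} with the isometry \eqref{eq:EqualityNormT}: $u_\e\TscS{p}u_0$ means $u_\e\TscW{p}u_0$ and $\norm{u_\e}{L^p(\Omega)}\to\norm{u_0}{L^p(\Omega\times Y)}$, which by \eqref{eq:EqualityNormT} is $\T_\e(u_\e)\rightharpoonup u_0$ in $L^p(\Omega\times Y)$ together with $\norm{\T_\e(u_\e)}{L^p(\Omega\times Y)}\to\norm{u_0}{L^p(\Omega\times Y)}$; since $L^p(\Omega\times Y)$ is uniformly convex for $1<p<\infty$, this is equivalent to $\T_\e(u_\e)\to u_0$ strongly, and the argument is reversible. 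The main obstacle I anticipate is the careful bookkeeping in step \eqref{item:DistEqui}: one must verify that $\T_\e\bigl(\varphi_\#(\cdot,\tfrac\cdot\e)\bigr)\to\varphi$ in $L^q(\Omega\times Y)$ — controlling the discrepancy between $\varphi\bigl([x]_{\e,Y}+\e y,\,y+\text{(integer part correction)}\bigr)$ and $\varphi(x,y)$ via uniform continuity of the smooth, compactly-supported $\varphi$ and the smallness of $|\Lambda_\e|$ — and that multiplicativity of $\T_\e$ on products $u_\e\cdot\varphi_\#(\cdot,\tfrac\cdot\e)$ holds on $\tilde\Omega_\e$ while the $\Lambda_\e$-part is negligible because $\varphi$ has compact support in $\Omega$ and hence vanishes on $\Lambda_\e$ for $\e$ small. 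Once this single technical lemma about unfolded test functions is in place, the rest is a formal unwinding of definitions.
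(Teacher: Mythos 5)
Your proposal follows essentially the same route as the paper: the adjunction identity obtained from the multiplicativity and integral-preservation of $\T_\e$, the key lemma that $\T_\e\big(\varphi(\cdot_x,\tfrac{\cdot_x}{\e})\big)\to\varphi$ strongly in $L^q(\Omega\times Y)$ via pointwise bounds and dominated convergence, part \eqref{item:WeakEqui} from part \eqref{item:DistEqui} together with the isometry \eqref{eq:EqualityNormT} and boundedness, and part \eqref{item:StrongEqui} from uniform convexity of $L^p(\Omega\times Y)$ plus the isometry. The one step you flag as delicate --- upgrading convergence of the integrals to strong $L^1$ convergence of $\T_\e(u_\e)\varphi$ in part \eqref{item:DistEqui} --- is treated no more carefully in the paper, which likewise reduces \eqref{item:DistEqui} to the identity of the limits of the integrals in \eqref{eq:limue=limTue}.
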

\begin{proof}
In order to prove the first equivalence, it is enough to show that
\begin{align}\label{eq:limue=limTue}
\lim\limits_{\e \to 0} \intO u_\e(x) \varphi\xxeps dx
= 
\lim\limits_{\e \to 0} \intOY \T_\e(u_\e)\xy \T_\e\left(\varphi\left(\cdot_x, \frac{\cdot_x}{\e}\right)\right) \!\xy dy dx
=
\lim\limits_{\e \to 0} \intOY \T_\e(u_\e)\xy \varphi\xy dx
\end{align}
for every smooth test function $\varphi \in D(\Omega;C^\infty_\#(Y))$.
The first equality in \eqref{eq:limue=limTue} follows directly from the definition of $\T_\e$ and \eqref{eq:EqualityIntegralT}. For the second equality, it is enough to show that $\T_\e(\varphi\left(\cdot_x, \frac{\cdot_x}{\e}\right))$ converges strongly to $\varphi$ in $L^q(\Omega \times Y)$ for $ \frac{1}{p}+ \frac{1}{q} =1$. We note that for every $x \in \Omega$ there exists $\e_0 >0$ small enough such that $x \in \tilde{\Omega}_\e$ for every $0<\e <\e_0$. Thus, we obtain the pointwise convergence
\begin{align*}
\T_\e\left(\varphi\left(\cdot_x, \frac{\cdot_x}{\e}\right)\right)\xy
=
\varphi \left([x]_{\e,Y} + \e y, \frac{[x]_{\e,Y} + \e y }{ \e} \right)
=
\varphi ([x]_{\e,Y} + \e y, y) \underset{\e \to 0}{\rightarrow} \varphi\xy
\end{align*}
for every $\xy \in \Omega \times Y$. Since $|\T_\e(\varphi\left(\cdot_x, \frac{\cdot_x}{\e}\right))(x,y)|$ is also pointwise bounded for a.e.~$\xy \in \Omega \times Y$ by $\norm{\varphi}{L^\infty(\Omega \times Y)}$, we can apply Lebesgue's convergence theorem and obtain the strong convergence of $\T_\e(\varphi(\cdot_x, \frac{\cdot_x}{\e}))$ to $\varphi$ in $L^q(\Omega \times Y)$ for $\frac{1}{p}+ \frac{1}{q} = 1$, which implies \ref{thm:T-SC-Equi-Unf}\eqref{item:DistEqui}.

In order to prove Theorem \ref{thm:T-SC-Equi-Unf}\eqref{item:WeakEqui}, we note that both types of weak convergences are equivalent to the boundedness of the sequence plus the corresponding convergence of Theorem \ref{thm:T-SC-Equi-Unf}\eqref{item:DistEqui}.
Using the isometry of $\T_\e$, we can translate the boundedness of the sequences. Then, the equivalence of the weak convergences follows directly from Theorem \ref{thm:T-SC-Equi-Unf}\eqref{item:DistEqui}.

For the equivalence of the strong convergences, we note that the strong convergence of $\T_\e(u_\e)$ is equivalent to the weak convergence of $\T_\e(u_\e)$ plus $\lim\limits_{\e \to 0}\norm{\T_\e(u_\e)}{L^p(\Omega \times Y)} = \norm{u_0}{L^p(\Omega \times Y)}$ since $L^p(\Omega \times Y)$ is a uniformly convex Banach space. Thus, Theorem \ref{thm:T-SC-Equi-Unf}\eqref{item:StrongEqui} follows from Theorem \ref{thm:T-SC-Equi-Unf}\eqref{item:WeakEqui} and the isometry of $\T_\e$.
\end{proof}

Having these results about the unfolding operator, we can prove Theorem \ref{thm:T-SC-Prod} as follows.
\begin{proof}[Proof of Theorem \ref{thm:T-SC-Prod}]
We translate the strong two-scale convergence of $u_\e$ and the weak (resp.~strong) two-scale convergence of $v_\e$ with the unfolding operator $\T_\e$ and Theorem \ref{thm:T-SC-Equi-Unf} into the strong convergence of $\T_\e(u_\e)$  in $L^p(\Omega \times Y)$ and the weak (resp.~strong) convergence of $\T_\e(v_\e)$ in $L^q(\Omega \times Y)$. Employing Hölder estimates, we obtain the weak (resp.~strong) convergence of the product $\T_\e(u_\e v_\e) =\T_\e(u_\e)\T_\e(v_\e)$ to $u_0 v_0$ in $L^r(\Omega \times Y)$. Theorem \ref{thm:T-SC-Equi-Unf} transfers this convergence back into the weak (resp.~strong) two-scale convergences of $u_\e v_\e$ to $u_0v_0 \in L^r(\Omega \times Y)$.
\end{proof}

\section{The two-scale transformation method}\label{sec:TrafoMethod}
In the following, let $Y$ be divided into an open set $\Yp\subset Y$, which constitutes the material part of $Y$, and a hole $Y\setminus \overline{\Yp}$. We assume that the $Y$-periodic extension of $\Yp$, denoted by $Y^*_\#\coloneqq \operatorname{int}\Big(\bigcup\limits_{k \in \Z^N} k + \overline{\Yp} \Big)$, has a Lipschitz boundary.
From now on we assume  that the macroscopic domain $\Omega \subset \R^N$ is not only open and bounded, but also has a Lipschitz boundary. Let $\hOe \coloneqq \Omega \cap \e Y^*_\#$ be the $\e$-scaled periodic reference domains.
Then, we define the locally periodic domains $\Oe$ by transforming the periodic reference domains.
\begin{defi}
We say that a sequence of open domains $\Oe \subset \R^N$ is \textit{locally periodic} if there exists a sequence of locally periodic transformations $\psi_\e$ (see Definition \ref{def:Psi}) such that $\Oe = \psi_\e(\hOe)$.
\end{defi}
In order to give the definition of locally periodic transformations, we have to consider the two-scale convergence for sequences $u_\e$ defined on $\hOe$. For functions defined on $\hOe$, we denote their extension by $0$ on $\Omega$ by $\widetilde{\cdot}$. For functions defined on $\Omega \times \Yp$, we analogously denote their extension by $0$ on $\Omega \times Y$ by $\widetilde{\cdot}$.
\begin{defi}\label{def:Psi}
We say a sequence of $C^1$-diffeomorphisms $\psi_\e :\overline{\hOe} \rightarrow \overline{\Oe}$, for $\Oe \coloneqq \psi_\e(\hOe) \subset\R^N$, is a sequence of \textit{locally periodic transformations} if:
\begin{enumerate}
\item there exists  $c_J >0$ such that $J_\e\geq c_J$ with $J_\e \coloneqq  \det(\Psi_\e)$ and $\Psi_\e \coloneqq D \psi_\e$,
\item there exists $C>0$ such that $\e^{-1}\norm{ \check{\psi}_\e}{C(\overline{\hOe})} + \norm{\nabla \check{\psi}_\e}{C(\overline{\hOe})}\leq C$, where $\check{\psi}_\e(x) \coloneqq \psi_\e(x) -x$ are the corresponding displacement mappings,

\item there exists $\psi_0 \in L^\infty(\Omega;C^1(\overline{\Yp}))^N$, which we call the \textit{limit transformation}, such that 
\begin{enumerate}
\item
$\psi_0(x,\cdot_y) : \overline{\Yp} \rightarrow  \overline{\Ypx}$, for $\Ypx \coloneqq \psi_0(x,\Yp) \subset Y$, are $C^1$-diffeomorphisms for a.e.~$x\in \Omega$,
\item the corresponding displacement mapping, defined by $\check{\psi}_\e(x,y) \coloneqq \psi_\e(x,y) -y$, can be extended $Y$-periodically such that $\check{\psi} \in L^\infty(\Omega;C^1_\#(\overline{\Yp}))$,
\item $\e^{-1}\widetilde{\check{\psi}_\e} \Tscs{\infty} \widetilde{\check{\psi}}_0$ and $\widetilde{\nabla \check{\psi}_\e} \Tscs{\infty} \widetilde{\nabla_y \check{\psi}}_0$.
\end{enumerate}
\end{enumerate}
For a.e.~$x \in \Omega$, we denote the Jacobians of $\psi_0(x,\cdot_y)$ by $\Psi_0(x,\cdot_y) \coloneqq D_y \psi_0(x,\cdot_y)$, $J_0(x,\cdot_y) \coloneqq \det(\Psi_0(x,\cdot_y))$ and its inverse by $\psi_0^{-1}(x,\cdot_y)$. Moreover, we denote the displacement mappings of the back-transformations by $\check{\psi}_\e^{-1}(x) \coloneqq \psi_\e^{-1}(x) -x$ and $\check{\psi}_0^{-1}\xy \coloneqq \psi_0^{-1}\xy -y$.
\end{defi}
We obtain the following uniform estimates and additional strong two-scale convergences as a direct consequence of the definition of the locally periodic transformations $\psi_\e$. 
\begin{lemma}\label{lemma:TwoScalePsi}
Let $\psi_\e$ be locally periodic transformations with limit transformation $\psi_0$. Then, there exist constants $c_J, C >0$ such that
\begin{align*}
&\norm{\Psi_\e}{C(\overline{\hOe})} + \norm{\Psi_\e^{-1}}{C(\overline{\hOe})} + \norm{J_\e}{C(\overline{\hOe})} \leq C,
\\
& \norm{\Psi_0}{L^\infty(\Omega;C(\overline{\Yp}))}+
\norm{\Psi_0^{-1}}{L^\infty(\Omega;C(\overline{\Yp}))}
+
\norm{J_0}{L^\infty(\Omega;C(\overline{\Yp}))}
\leq C, \ \ \ J_0 \geq c_J.
\end{align*}
Furthermore, 
\begin{align*}
\widetilde{\Psi_\e} \Tscs{\infty} \widetilde{\Psi_0}, &&	\widetilde{\Psi_\e^{-1}} \Tscs{\infty} \widetilde{\Psi_0^{-1}}, && \widetilde{J_\e} \Tscs{\infty} \widetilde{J_0}, && \widetilde{J_\e^{-1}} \Tscs{\infty} \widetilde{J_0^{-1}}.
\end{align*}
\end{lemma}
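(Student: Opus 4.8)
The plan is to derive everything from Definition \ref{def:Psi} together with the algebraic stability of the operations involved (determinant, matrix inverse) under strong two-scale convergence, which in turn reduces via Theorem \ref{thm:T-SC-Equi-Unf}\eqref{item:StrongEqui} to the corresponding statements for strong $L^p$-convergence of unfolded sequences. First I would establish the uniform bounds. The estimate on $\check\psi_\e$ in item (2) gives $\norm{\nabla\check\psi_\e}{C(\overline{\hOe})}\le C$, hence $\Psi_\e = I + \nabla\check\psi_\e$ is uniformly bounded; combined with $J_\e\ge c_J$ from item (1) and $J_\e = \det(\Psi_\e)$ bounded above (a polynomial in the bounded entries of $\Psi_\e$), Cramer's rule $\Psi_\e^{-1} = J_\e^{-1}\operatorname{cof}(\Psi_\e)^\top$ yields the uniform bound on $\Psi_\e^{-1}$. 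The same reasoning applied to $\psi_0$: item (3b) gives $\nabla_y\check\psi\in L^\infty(\Omega;C_\#(\overline{\Yp}))$, hence $\Psi_0 = I + \nabla_y\check\psi_0$ is bounded in $L^\infty(\Omega;C(\overline{\Yp}))$, item (3a) gives that $\Psi_0$ is invertible a.e., and one needs the lower bound $J_0\ge c_J$; I expect this to follow by passing to the limit in $J_\e\ge c_J$ using the established two-scale convergence $\widetilde{J_\e}\Tscs{\infty}\widetilde{J_0}$ (a strong two-scale limit of functions bounded below by $c_J$, restricted to the support, is again bounded below by $c_J$), after which Cramer's rule again controls $\Psi_0^{-1}$.

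Next I would prove the four strong two-scale convergences. The starting point is item (3c): $\widetilde{\nabla\check\psi_\e}\Tscs{\infty}\widetilde{\nabla_y\check\psi}_0$. Since $\Psi_\e = I + \nabla\check\psi_\e$ on $\hOe$ and $\widetilde{I_{\hOe}}\Tscs{\infty}\widetilde{\1_{\Yp}I}$ (the characteristic function of a periodic domain strongly two-scale converges to the characteristic function of the cell), adding these gives $\widetilde{\Psi_\e}\Tscs{\infty}\widetilde{\Psi_0}$ directly. For the determinant, I would translate to unfolded sequences: $\T_\e(\widetilde{\Psi_\e})\to\widetilde{\Psi_0}$ strongly in every $L^p(\Omega\times Y)$; since these sequences are also uniformly bounded in $L^\infty$, a subsequence converges a.e., and $J_\e = \det\Psi_\e$ is a fixed polynomial in the matrix entries, so $\T_\e(\widetilde{J_\e}) = \det(\T_\e(\widetilde{\Psi_\e}))$ (using that unfolding commutes with pointwise-in-$x$ algebraic operations and that $\widetilde{\cdot}$ of a product is the product of extensions on the relevant set) converges a.e. to $\det\widetilde{\Psi_0}=\widetilde{J_0}$, and dominated convergence upgrades this to strong $L^p$-convergence; Theorem \ref{thm:T-SC-Equi-Unf}\eqref{item:StrongEqui} then gives $\widetilde{J_\e}\Tscs{\infty}\widetilde{J_0}$. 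The same a.e.-plus-domination argument, now using the lower bounds $J_\e\ge c_J$, $J_0\ge c_J$ so that $t\mapsto 1/t$ and $t\mapsto 1/t$ applied entrywise via Cramer's rule are Lipschitz on the relevant range, yields $\widetilde{J_\e^{-1}}\Tscs{\infty}\widetilde{J_0^{-1}}$ and $\widetilde{\Psi_\e^{-1}}\Tscs{\infty}\widetilde{\Psi_0^{-1}}$.

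The main obstacle I anticipate is the bookkeeping around the extension operator $\widetilde{\cdot}$ and the unfolding operator acting on the scaled periodic domain $\hOe=\Omega\cap\e Y^*_\#$: one must check that $\T_\e(\widetilde{\Psi_\e})$ really equals (up to the boundary layer $\Lambda_\e$, which is negligible) the matrix whose entries are $\T_\e$ of the entries, that the zero-extension is compatible with taking determinants and inverses cellwise (off the holes everything is genuine, on the holes everything is zero, and $\det$ of a zero matrix is zero but one only cares about the restriction to $\widetilde{Y^*_x}$), and that the various "$\Tscs{\infty}$" statements — which by the Notation mean convergence in every $L^q$, $q<\infty$, together with convergence of norms — are preserved; this is where the uniform $L^\infty$ bounds proved in the first step are essential, since they provide the domination needed to turn a.e. convergence of unfolded sequences into strong $L^q$-convergence for every finite $q$ and simultaneously the convergence of the $L^q$-norms. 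Once the commutation of $\T_\e$ with the smooth algebraic maps $\det$ and inverse is set up cleanly on $\tilde\Omega_\e$, the rest is a routine application of the dominated convergence theorem and Theorem \ref{thm:T-SC-Equi-Unf}.
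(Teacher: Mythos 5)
Your proposal is correct and follows essentially the same route as the paper: decompose $\Psi_\e=\1\chi_{\hOe}+\nabla\check{\psi}_\e$, use the strong two-scale convergence of $\chi_{\hOe}$ to $\chi_{\Yp}$, propagate through the determinant and Cramer's rule via the uniform bounds and $J_\e\geq c_J$, and transfer the lower bound to $J_0$ through the unfolding operator. The only cosmetic difference is that where you argue by a.e.\ convergence of unfolded subsequences plus dominated convergence, the paper invokes its product rule for strong two-scale convergence (Theorem \ref{thm:T-SC-Prod}) and a direct Lipschitz-type estimate for $J_\e^{-1}$ — both reduce to the same unfolding argument.
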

\begin{proof}
The uniform estimate of $\nabla \check{\psi}_\e$ directly gives one for $\Psi_\e = D_x \check{\psi}_\e+ \1$.
Since $J_\e$ and $\Psi_\e^{-1}$ are polynomials in $\Psi_\e$ and $J_\e^{-1}$, the uniform estimates for these follow with the additional uniform bound of $J_\e \geq c_J$ from below.

We rewrite $\widetilde{\Psi_\e} = \widetilde{D \psi_\e} = \widetilde{D \check{\psi}_\e} + \widetilde{Dx|_{\hOe}} = \widetilde{D \check{\psi}_\e} + \1 \chi_{\hOe}$ and $\widetilde{\Psi_0} = \widetilde{D_y \psi_0}= \widetilde{D_y \check{\psi}_0} + \widetilde{D_y y|_{\Yp}} = \widetilde{D_y \check{\psi}_0} + \1 \chi_{\Yp}$. Then, the strong two-scale convergence of $\widetilde{D \check{\psi}_\e}$ to $\widetilde{D_y \psi_0}$ and the strong two-scale convergence of $\chi_\hOe$ to $\chi_\Yp$ directly imply the strong two-scale convergence of $\widetilde{\Psi_\e}$ to $\widetilde{\Psi_0}$ for every $p \in (1,\infty)$.
Note that $\chi_{\hOe}\Tscs{\infty} \chi_\Yp$ because it can be written as $\chi_{\hOe}(x) = \chi_{Y^*_\#}(\xeps)$ for $ \chi_{Y^*_\#} \in L^q_\#(Y;C(\overline{\Omega}))$ for every $q\in (1,\infty)$ (cf. \cite[Theorem 3]{LNW02}).
	
Since $\widetilde{J_\e}$ and $\widetilde{J_0}$ are polynomials with respect to the entries of $\widetilde{\Psi_\e}$ and $\widetilde{\Psi_0}$, respectively, Theorem \ref{thm:T-SC-Prod} implies the strong two-scale convergence of $\widetilde{J_\e}$ to $\widetilde{J_0}$ for every $p \in (1,\infty)$.

The uniform boundedness of $J_\e \geq c_J$ from below gives $\T_\e(\widetilde{J_\e})\xy \geq c_J$  for a.e.~$\xy \in \Omega \times Y$. Then, the strong convergence of $\T_\e(\widetilde{J_\e})$ to $\widetilde{J_0}$ in $L^p(\Omega \times Y)$ transfers the uniform boundedness from below to $J_0\xy \geq c_J$ for a.e.~$\xy \in \Omega \times \Yp$.

We rewrite $\T_\e(\widetilde{J_\e^{-1}}) = (\T_\e(\widetilde{J_\e})\widetilde{|_{\Omega \times \Yp})^{-1}}$ and obtain
\begin{align*}
\norm{\T_\e(\widetilde{J_\e^{-1}}) - \widetilde{J_0^{-1}}}{L^p(\Omega \times Y)} &= \norm{\T_\e(\widetilde{J_\e^{-1}}) - J_0^{-1}}{L^p(\Omega \times \Yp)}= \norm{(J_0 - \T_\e(\widetilde{J_\e}) )/ (J_0 \T_\e(\widetilde{J_\e}))}{L^p(\Omega \times \Yp)}
\\
&\leq \frac{1}{c_J^2}\norm{(J_0 - \T_\e(\widetilde{J_\e})) } {L^p(\Omega \times \Yp)} = \frac{1}{c_J^2}\norm{(\widetilde{J_0} - \T_\e(\widetilde{J_\e})) } {L^p(\Omega \times Y)} \to 0,
\end{align*}
which implies the strong two-scale convergence of $\widetilde{J_\e^{-1}}$ to $\widetilde{J_0^{-1}}$ for any $p \in (1,\infty)$.
	
Since $\widetilde{\Psi_\e^{-1}}$ is a polynomial in $\widetilde{J_\e^{-1}}$ and $\widetilde{\Psi_\e}$, the strong two-scale convergence can be directly transferred to the strong two-scale convergence of $\widetilde{\Psi_\e^{-1}}$ to $\widetilde{\Psi_0^{-1}}$.

Moreover, we obtain $\norm{\Psi_\e^{-1}}{L^\infty(\Omega;C(\overline{\Yp}))} \leq C$ and $ \norm{J_0}{L^\infty(\Omega;C(\overline{\Yp}))} \leq C$ from $\norm{\Psi_0}{L^\infty(\Omega;C(\overline{\Yp}))} \leq C$ and $J_0\xy \geq c_J$ by using the same argumentation as for $J_\e$ and $\Psi_\e^{-1}$.
\end{proof}

For example, a family of diffeomorphisms $\psi_\e$ which are locally periodic transformations in the sense of Definition \ref{def:Psi} can be obtained as follows.
\begin{example}
Let $\Theta : \overline{\Omega} \rightarrow [\Theta_{\textrm{min}}, \Theta_{\textrm{max}}]$ be a continuous function, which describes, for example, the local porosity. Let $\psi : [\Theta_{\textrm{min}}, {\Theta_\textrm{max}}] \times Y \rightarrow Y$ be a smooth mapping such that, for every $\Theta \in[\Theta_{\textrm{min}}, \Theta_{\textrm{max}}]$, $\psi(\Theta, \Yp)$ gives a cell with porosity $\Theta$ and $\psi(\Theta, \cdot) : Y \rightarrow Y$ is a $C^1$-diffeomorphism. Moreover, we assume that there exist $C,c_J >0$ such that $\norm{D_y\psi}{C([\Theta_{\textrm{min}, \textrm{max}}]\times \overline{\Omega})} \leq C$, $\det(D_y\psi) \geq c_J$  and that the corresponding displacement mapping $\check{\psi}(\Theta,y) = \psi(\Theta, y)-y$ has compact support in the interior of $Y$.
Then, $\psi_\e(x) \coloneqq x + \e \check{\psi}\left(\Theta([x]_{\e,Y}), \{x\}_{\e,Y}\right)$ are locally periodic transformations with limit transformation $\psi_0(x,y) \coloneqq \check{\psi}(\Theta(x), y)$ in the sense of Definition \ref{def:Psi}.
\end{example}

Before we continue with the transformation of the two-scale convergence, we recap the main two-scale compactness results for periodic domains.
Since the extension by $0$ does not preserve the $W^{1,q}$-regularity, these compactness results cannot be derived directly from the previous $W^{1,p}$-compactness results (cf. Theorem \ref{thm:T-SC-W1pComp} and Theorem \ref{thm:T-SC-W1pComp2}).
Instead, their derivation extensively utilises the domains' periodic structures.
\begin{prop}\label{prop:PorousCompact}
Let $u_\e$ be a bounded sequence in $L^p(\hOe)$ for $p \in (1,\infty)$. Then, there exists a subsequence $\es$ and $u_0 \in L^p(\Omega \times \Yp)$ such that $\widetilde{u_\es} \TscW{p}\widetilde{u_0}$.
	
Furthermore, let $l \in \{0,2\}$ and let $u_\e$ be a sequence in $W^{1,p}(\hOe)$ such that
\begin{align*}
\norm{u_\e}{L^p(\hOe)} + \e^{l/2}\norm{\nabla u_\e}{L^p(\hOe)} \leq C.
\end{align*}
Then, the following statements hold:
\begin{enumerate}
\item \label{item:PorousCompact}
If $l = 0$ and $Y^*_\#$ is connected, then there exist $u_0 \in L^p(\Omega)$, $u_1 \in L^p(\Omega;W^{1,p}_\#(\Yp)/\R)$ and a subsequence $\es$ such that
$\widetilde{u_\es} \TscW{p}\chi_{\Yp} u_0$ and $\widetilde{\nabla u_\es} \TscW{p}\chi_{\Yp} \nabla_x u_0 + \widetilde{\nabla_y u_1}$.
\item \label{item:PorousCompact2}
If $l = 2$, then there exist $u_0 \in L^p(\Omega; W^{1,p}_\#(\Yp))$  and a subsequence $\es$ such that $\widetilde{u_\es} \TscW{p}\widetilde{u_0}$ and $\es\widetilde{\nabla u_\es} \TscW{p}\widetilde{\nabla_y u_0}$.
\end{enumerate}
\end{prop}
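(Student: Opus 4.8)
The plan is to reduce everything to the unfolding operator and the known two-scale compactness results on the full domain $\Omega$. For the first statement, I would extend $u_\e$ by $0$ to get $\widetilde{u_\e}\in L^p(\Omega)$; since the extension does not change the $L^p$-norm, $\widetilde{u_\e}$ is bounded in $L^p(\Omega)$. Theorem \ref{thm:T-SC-LpComp} then yields a subsequence and a limit $v_0\in L^p(\Omega\times Y)$ with $\widetilde{u_\es}\TscW{p}v_0$. It remains to see that $v_0$ vanishes on $\Omega\times(Y\setminus\overline{Y^*})$, i.e.\ $v_0=\widetilde{u_0}$ for some $u_0\in L^p(\Omega\times Y^*)$. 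By Theorem \ref{thm:T-SC-Equi-Unf}\eqref{item:WeakEqui} this is equivalent to $\T_\e(\widetilde{u_\e})\rightharpoonup v_0$ in $L^p(\Omega\times Y)$, and since $\widetilde{u_\e}$ is supported in $\hOe=\Omega\cap\e Y^*_\#$, one checks from the definition of $\T_\e$ that $\T_\e(\widetilde{u_\e})$ is (up to the boundary layer $\Lambda_\e$, whose contribution vanishes) supported in $\Omega\times Y^*$; the indicator $\chi_{Y\setminus\overline{Y^*}}$ is an admissible test object, so the weak limit is supported there too. This gives the first claim.

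For part \eqref{item:PorousCompact} ($l=0$, $Y^*_\#$ connected) and part \eqref{item:PorousCompact2} ($l=2$), the extension by $0$ destroys $W^{1,p}$-regularity, so the clean route is to invoke a $W^{1,p}$-extension operator $E_\e: W^{1,p}(\hOe)\to W^{1,p}(\Omega)$ whose operator norm and the constant in $\norm{\nabla E_\e u}{L^p(\Omega)}\le C\norm{\nabla u}{L^p(\hOe)}$ are uniform in $\e$; such operators exist precisely because $Y^*_\#$ has Lipschitz boundary (and, for the gradient bound in the $l=0$ case, one needs $Y^*_\#$ connected so the Poincaré–Wirtinger constant on the perforated cell is finite). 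In the case $l=0$, $E_\e u_\e$ is bounded in $W^{1,p}(\Omega)$, so Theorem \ref{thm:T-SC-W1pComp} gives $u_0\in W^{1,p}(\Omega)$ and $u_1\in L^p(\Omega;W^{1,p}(Y)/\R)$ with $E_\e u_\es\TscW{p}u_0$ and $\nabla E_\e u_\es\TscW{p}\nabla_x u_0+\nabla_y u_1$; restricting the test functions to ones supported (in $y$) on $Y^*$ and using that $\widetilde{u_\e}=\widetilde{E_\e u_\e|_{\hOe}}$ kills the part outside $Y^*$, so the restriction of $u_1$ to $\Omega\times Y^*$ is what we want, and it may be taken $Y$-periodic with zero mean on $Y^*$. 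In the case $l=2$, $E_\e u_\e$ satisfies $\norm{E_\e u_\e}{L^p(\Omega)}+\e\norm{\nabla E_\e u_\e}{L^p(\Omega)}\le C$, so Theorem \ref{thm:T-SC-W1pComp2} gives $u_0\in L^p(\Omega;W^{1,p}_\#(Y))$ with $E_\e u_\es\TscW{p}u_0$ and $\e\nabla E_\e u_\es\TscW{p}\nabla_y u_0$; again restricting to $Y^*$ and using that the extension agrees with $u_\e$ on $\hOe$ identifies the two-scale limits of $\widetilde{u_\es}$ and $\e\widetilde{\nabla u_\es}$ as $\widetilde{u_0}$ and $\widetilde{\nabla_y u_0}$, with $u_0\in L^p(\Omega;W^{1,p}_\#(Y^*))$.

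The main obstacle is the construction of the uniform $W^{1,p}$-extension operator $E_\e$ and its compatibility with the $\e$-scaling (the bound $\norm{\nabla E_\e u}{L^p}\le C\norm{\nabla u}{L^p}$ with $C$ independent of $\e$, and in the $l=2$ case the scaling of the lower-order term): this is the classical perforated-domain extension result, obtained by transporting a fixed extension operator on the reference cell $Y^*$ via the $\e$-dilation and patching over cells, with the connectedness of $Y^*_\#$ entering through the Poincaré inequality on $Y^*$. A secondary technical point is controlling the boundary layer $\Lambda_\e$: one must check that the cells near $\partial\Omega$ that are only partially contained in $\Omega$ contribute negligibly to all the integrals, which follows from $|\Lambda_\e|\to 0$ together with the uniform $L^p$-bounds; this is routine given the Lipschitz regularity of $\partial\Omega$. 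Everything else is bookkeeping with the equivalences in Theorem \ref{thm:T-SC-Equi-Unf} and the fact that $\widetilde{\cdot}$ and restriction to $Y^*$ are mutually inverse in the relevant sense.
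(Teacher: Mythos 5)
Your argument for the $L^p$ part is essentially the paper's: both identify the support of the limit (you via the support of $\T_\e(\widetilde{u_\e})$ plus the vanishing boundary layer $\Lambda_\e$, the paper via test functions vanishing on $\Omega\times\Yp$), and both are correct. For the two $W^{1,p}$ parts, however, you take a genuinely different route. The paper does \emph{not} use extension operators: it cites the direct proof of Allaire--Briane (Theorem 4.6 of \cite{All96}) for the case $l=0$ and, for $l=2$, adapts the proof of Theorem \ref{thm:T-SC-W1pComp2} (integration by parts against test functions compactly supported in $\Yp$ to identify the limit of $\e\widetilde{\nabla u_\e}$ as $\widetilde{\nabla_y u_0}$, then divergence-free test functions for periodicity), using only the $L^p$ compactness from the first part. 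Your route through a uniform $W^{1,p}$-extension operator $E_\e$ is the classical alternative (Cioranescu--Saint Jean Paulin; Acerbi, Chiad\`o Piat, Dal Maso, Percivale) and does work: $E_\e u_\e$ falls under Theorems \ref{thm:T-SC-W1pComp} and \ref{thm:T-SC-W1pComp2}, and multiplying by $\chi_{\hOe}\TscS{q}\chi_\Yp$ (Theorem \ref{thm:T-SC-Prod} plus Proposition \ref{prop:T-SC-SmoothTestF} to recover the $L^p$ statement) restricts the limits to $\Omega\times\Yp$ exactly as claimed. What you buy is that nothing beyond the full-domain compactness theorems needs to be reproven; what the paper's route buys is that it avoids the two technical costs you yourself flag: the cells cut by $\partial\Omega$, where a uniform extension into all of $\Omega$ is not actually routine (the standard fix is to extend only on $\Omega'\Subset\Omega$ and use that two-scale limits are determined by compactly supported test functions, together with the uniform $L^p$ bound on the boundary layer), and, in the case $l=2$, the fact that no connectedness of $Y^*_\#$ is assumed, so the cell-wise extension with pure gradient control must be built component by component (possibly on overlapping double cells), whereas the direct integration-by-parts argument needs no extension at all. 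Also note that your attribution of the connectedness hypothesis in part \eqref{item:PorousCompact} solely to the Poincar\'e constant of the extension slightly obscures its structural role: without connectedness of $Y^*_\#$ the limit $u_0$ would genuinely depend on $y$ and the decomposition $\chi_\Yp\nabla_x u_0+\widetilde{\nabla_y u_1}$ would fail, which is precisely why no extension with a pure gradient bound can exist in that case. Neither proof is self-contained; yours imports the uniform extension theorem where the paper imports the compactness statement itself.
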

\begin{proof}
Let $u_\e$ be bounded, then $\widetilde{u_\e}$ is bounded as well and Theorem \ref{thm:T-SC-LpComp} gives a subsequence which two-scale converges weakly to a limit function $u_0 \in L^p(\Omega \times Y)$. Employing two-scale test functions $\varphi$ which are $0$ in $\Omega \times \Yp$ yields $u_0 = 0$ in $Y\setminus \Yp$. Thus, we can rewrite the limit as $\widetilde{u_0}$ for $u_0|_{\Omega\times \Yp} \in L^p(\Omega \times \Yp)$.
	
A proof of Proposition \ref{prop:PorousCompact}\eqref{item:PorousCompact} is given in \cite[Theorem 4.6]{All96} for the case $p=2$. It can be generalised to arbitrary $p \in(1,\infty)$ in the same way as the standard $H^1$-two-scale compactness result.

Proposition \ref{prop:PorousCompact}\eqref{item:PorousCompact2} can be proven analogously to Theorem \ref{thm:T-SC-W1pComp2} by using the $L^p(\Oe)$ compactness result of Proposition \ref{prop:PorousCompact} instead of Theorem \ref{thm:T-SC-LpComp}.
\end{proof}

Before we can analyse the two-scale convergence under the two-scale transformation, we have to consider what two-scale convergence of sequences on $\Oe$ means. We note that the definition of the locally periodic transformations $\psi_\e$ does not ensure that $\Oe = \psi_\e(\hOe)$ is contained in $\Omega$. However, it ensures that $\norm{\check{\psi}_\e}{C(\overline{\hOe})}\leq \e C$, which implies that $|\Oe \setminus \Omega| \leq \e C$ as well as $\Oe \subset \{x \in \R^N \mid \operatorname{dist}(x,\Omega) \leq \e C \}$. 
Therefore, we expect a limit defined on the macroscopic domain $\Omega$, which could suggest to formulate the two-scale convergence for functions defined on $\Oe$ by restricting them on $\Omega \cap \Oe$ and then extending them by $0$ on $\Omega$. However, it turns out that this ansatz would not yield a natural translation between the two-scale convergence in the untransformed and the transformed setting.
Instead, we consider $\Od \coloneqq \{x \in \R^N \mid \operatorname{dist}(x,\Omega) < \delta \}$ for fixed $0 < \delta <<1 $ as the macroscopic domain and note that $\Oe \subset \Od$ for $\e$ small enough. We extend functions defined on $\Oe$ by $0$ on $\Od$, which we denote by $\widetilde{\cdot}$. Then, we can use the normal two-scale convergence, but for the macroscopic domain $\Od$ instead of $\Omega$.

However, we will show that the corresponding two-scale limits have support on $Q \coloneqq \{\xy \in \Omega \times Y \mid y \in \Ypx \}$ and the corresponding two-scale limit problems will be defined on $Q$. Therefore, we introduce the following non-cylindrical function spaces on $Q$ for $p\in[1,\infty)$
\begin{align*}
&L^p(\Omega;L^p(\Ypx)) \coloneqq \{f(\cdot_x, \psi_0^{-1}(\cdot_x, \cdot_y)) \mid f \in L^p(\Omega;L^p(\Yp)) \} = L^p(\Omega \times \Yp)
\\
&L^p(\Omega;W^{1,p}_\#(\Ypx)) \coloneqq \{f(\cdot_x, \psi_0^{-1}(\cdot_x, \cdot_y)) \mid f \in L^p(\Omega;W^{1,p}_\#(\Yp)) \}
\end{align*}
with the corresponding norms
\begin{align*}
\norm{u}{L^p(\Omega;L^p(\Ypx))} \coloneqq \norm{u}{L^p(Q)}, \ \ \ 
\norm{u}{L^p(\Omega;W^{1,p}_\#(\Ypx))} \coloneqq 
\norm{u}{L^p(Q)} + \norm{\nabla_y u}{L^p(Q)}.
\end{align*}

Note that the uniform boundedness of the Jacobians $J_0$ and $\Psi_0$ allows to carry all the important functional analytical properties from $L^p(\Omega;W^{1,p}_\#(\Yp))$ over to $L^p(\Omega;W^{1,p}_\#(\Ypx))$ and ensures that the above defined norms are well-defined.
Instead of defining these spaces by a transformation on the cylindrical two-scale reference domain, it would be equivalent to define them directly on the domain $Q$ as $L^p(Q)$ and as the functions in $L^p(Q)$ with $y$-gradient in $L^p(Q)$ and $Y$-periodic trace, respectively. Thus, not only the limit equations but actually also the corresponding weak formulation of the limit problem becomes transformationally independent.

In order to use these function spaces for the two-scale convergence, we extend functions defined on $Q$ by $0$, which we denote by $\widetilde{\cdot}$.
This approach yields an appropriate translation between the two-scale convergence in the untransformed and the transformed setting. 

Note that if $\Oe \subset \Omega$, we do not have to enlarge $\Omega$ and all the following results hold for $\Od = \Omega$.

In order to shorten out notation, we define
\begin{align*}
\varphi_{\e,\psi_\e}(\cdot_x) \coloneqq \varphi\left(\psi_\e(\cdot_x), \frac{\psi_\e(\cdot_x)}{\e}\right),
\ \ \
\varphi_{\psi_0} (\cdot_x,\cdot_y) \coloneqq \varphi(\cdot_x, \psi_0(\cdot_x,\cdot_y)),
\ \ \
\varphi_{\psi_0^{-1}} (\cdot_x,\cdot_y) \coloneqq \varphi(\cdot_x, \psi_0^{-1}(\cdot_x,\cdot_y))
\end{align*}
for functions which depend on $x$ and $y$.
For functions, which have already an index themselves, we write 
$u_{0, \psi_0}(\cdot_x, \cdot_y) = u_0(\cdot_x, \psi_0(\cdot_x, \cdot_y))$ and $\hu_{0, \psi_0^{-1}}(\cdot_x, \cdot_y) = \hu_0(\cdot_x, \psi_0^{-1}(\cdot_x, \cdot_y))$.

First, we consider the two-scale convergence of continuous functions under the locally periodic transformation $\psi_\e$.
\begin{lemma}\label{lem:TwoScaleTrafoTestFunction}
Let $\varphi \in C(\Omega; C_\#(Y))$. Then,
$\widetilde{\pe} \Tscs{\infty} \widetilde{\pO}$.
\end{lemma}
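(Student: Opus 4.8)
The plan is to prove the claimed strong two-scale convergence $\widetilde{\pe} \Tscs{\infty} \widetilde{\pO}$ by translating everything into the unfolding picture via Theorem~\ref{thm:T-SC-Equi-Unf}\eqref{item:StrongEqui}: it suffices to show that $\T_\e(\widetilde{\pe})$ converges strongly to $\widetilde{\pO}$ in $L^p(\Od \times Y)$ for every $p \in (1,\infty)$. I would first compute $\T_\e(\widetilde{\pe})$ pointwise. On a cell $k + \e Y \subset \overline{\hOe}$ with $[x]_{\e,Y}=k$, we have $\T_\e(\widetilde{\pe})\xy = \pe([x]_{\e,Y} + \e y) = \varphi\!\left(\psi_\e([x]_{\e,Y}+\e y), \frac{\psi_\e([x]_{\e,Y}+\e y)}{\e}\right)$. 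Writing $\psi_\e(z) = z + \check\psi_\e(z)$ with the decomposition $\check\psi_\e(z) = \e\, \check\psi(z, \frac z\e)$-type structure encoded in Definition~\ref{def:Psi}, the second (fast) argument becomes $y + [x]_{\e,Y}/\e + \e^{-1}\check\psi_\e(\cdots)$, which modulo $\Z^N$ (using $Y$-periodicity of $\varphi$ in the second slot and the fact that $[x]_{\e,Y}/\e \in \Z^N$) reduces to $\{x\}_{\e,Y}$ plus the periodic fluctuation of $\e^{-1}\check\psi_\e$. So heuristically $\T_\e(\widetilde{\pe})\xy \approx \varphi\bigl(x, y + \check\psi(x,y)\bigr) = \varphi(x,\psi_0(x,y)) = \pO\xy$ on $Q$, which is exactly the target.

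The rigorous core is then a pointwise-a.e.\ convergence statement plus dominated convergence. For a.e.\ fixed $\xy \in \Od \times Y$: for $\e$ small, $x$ lies in a cell $k+\e Y \subset \overline{\hOe}$ (using that the reference domain exhausts $\Omega$ and $\chi_{\hOe} \to \chi_{\Yp}$ — indeed if $y\in\Ypx$ the point eventually belongs to the material part, and if $y\notin \Ypx$ both $\T_\e(\widetilde\pe)$ and $\widetilde\pO$ vanish there). I would use the uniform estimate $\e^{-1}\norm{\check\psi_\e}{C(\overline{\hOe})} + \norm{\nabla\check\psi_\e}{C}\le C$ from Definition~\ref{def:Psi}(2) to control arguments, and the strong two-scale convergence $\e^{-1}\widetilde{\check\psi_\e} \Tscs{\infty} \widetilde{\check\psi}_0$ from Definition~\ref{def:Psi}(3c) — translated by Theorem~\ref{thm:T-SC-Equi-Unf} into $\T_\e(\e^{-1}\widetilde{\check\psi_\e}) \to \widetilde{\check\psi}_0$ in every $L^p$, hence (along a subsequence, which suffices after the standard subsequence-of-subsequence argument) pointwise a.e. Since $\varphi$ is (uniformly) continuous on $\Omega \times Y$ and $\psi_0(\cdot,\overline{\Yp})\subset\overline{Y}$, continuity of $\varphi$ lets me pass the limit inside: $\T_\e(\widetilde\pe)\xy \to \varphi(x,\psi_0(x,y)) = \widetilde\pO\xy$ a.e. The $L^\infty$-bound $|\T_\e(\widetilde\pe)| \le \norm{\varphi}{L^\infty(\Omega\times Y)}$ (by Theorem~\ref{thm:NormEqualityUnfold}, or directly from the definition) on the bounded domain $\Od\times Y$ provides the dominating function, so Lebesgue's theorem gives $\T_\e(\widetilde\pe) \to \widetilde\pO$ in $L^p(\Od\times Y)$ for every $p\in(1,\infty)$. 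By Theorem~\ref{thm:T-SC-Equi-Unf}\eqref{item:StrongEqui} this is precisely $\widetilde\pe \TscS{p}\widetilde\pO$ for all $p\in(1,\infty)$, i.e.\ $\widetilde{\pe} \Tscs{\infty} \widetilde{\pO}$.

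The main obstacle I anticipate is the bookkeeping in the fast argument: justifying carefully that $\frac{\psi_\e([x]_{\e,Y}+\e y)}{\e}$ equals $y$ plus an integer vector plus the unfolded periodic fluctuation of $\e^{-1}\check\psi_\e$, so that $Y$-periodicity of $\varphi(\cdot, \cdot_y)$ can be invoked cleanly and the limit identifies with $\check\psi_0$ — this needs the periodicity statement in Definition~\ref{def:Psi}(3b) and a small argument that the ``cell index contribution'' $[x]_{\e,Y}/\e$ is exactly in $\Z^N$. A secondary technical point is the interface behaviour near $\partial\Ypx$ and the cells in $\Lambda_\e$ (whose measure is $O(\e)$), but these contribute a vanishing $L^p$-remainder and are handled by the same domination argument. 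Everything else is routine.
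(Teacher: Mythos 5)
Your proposal is correct and follows essentially the same route as the paper: reduce via Theorem \ref{thm:T-SC-Equi-Unf} to strong convergence of $\T_\e(\widetilde{\pe})$ in $L^p$, unfold explicitly using $\psi_\e = \operatorname{id} + \check{\psi}_\e$ and the $Y$-periodicity of $\varphi$ to absorb $[x]_{\e,Y}/\e \in \Z^N$, pass to the pointwise a.e.\ limit via the strong two-scale convergence of $\e^{-1}\widetilde{\check{\psi}_\e}$ along subsequences, and conclude by dominated convergence plus the subsequence-of-subsequences argument. The only minor imprecision is that the common support of $\T_\e(\widetilde{\pe})$ and $\widetilde{\pO}$ is $\Omega\times\Yp$ (the reference cell's material part), not $\Omega\times\Ypx$, since the unfolded variable $y$ lives in the untransformed cell.
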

\begin{proof}
Because of Theorem \ref{thm:T-SC-Equi-Unf}, it is enough to show that $\T_\e(\widetilde{\pe})$ converges strongly to $\widetilde{\pO}$ in $L^p(\Omega\times Y)$ for every $p \in (1,\infty)$.
It can be reduced to the strong convergence in $L^p(\Omega\times \Yp)$ since $\operatorname{supp}(\T_\e(\widetilde{\pe}))$, $\operatorname{supp}(\widetilde{\pO)} \subset\Omega \times \Yp$.

Using $\psi_\e(x) = x + \check{\psi}_\e(x)$ and the $Y$-periodicity of $\varphi$, we can rewrite, for a.e.~$\xy \in \Omega \times \Yp$ and $\e>0$ small enough such that $x \in \tilde{\Omega}_\e$
\begin{align*}
\T_\e(\widetilde{\pe}) =
\T_\e&\Big(\widetilde{\varphi\Big(\psi_\e(\cdot_x), \frac{\psi_\e(\cdot_x)}{\e}\Big)}\Big)(x,y) = \varphi\left(\psi_\e( [x ]_{\e,Y} + \e y ), \frac{\psi_\e([x ]_{\e,Y} + \e y )}{\e} \right)
\\
&=
\varphi\left([x ]_{\e,Y} + \e y + \check{\psi}_\e( [x]_{\e,Y} + \e y ), \frac{[x]_{\e,Y} + \e y + \check{\psi}_\e( [x]_{\e,Y} + \e y )}{\e} \right)
\\
&=
\varphi\left([x]_{\e,Y} + \e y + \T_\e(\widetilde{\check{\psi}_\e})(x,y) , y + \frac{\T_\e(\widetilde{\check{\psi}_\e})(x,y)}{\e} \right).
\end{align*}
	
The strong two-scale convergence of $\frac{1}{\e}\widetilde{\check{\psi}_\e}$ to $\widetilde{\check{\psi}_0}$ implies the strong convergence of $\frac{1}{\e} \T_\e(\widetilde{\check{\psi}_\e})$ to $\widetilde{\check{\psi}_0}$ in $L^p(\Omega \times Y)$. Then, there exists a subsequence $\es$ such that $\frac{1}{\es}\T_\es(\widetilde{\check{\psi}_\es})\xy \rightarrow \widetilde{\check{\psi}_0}\xy$ for a.e.~$\xy \in \Omega \times Y$. Moreover, $[x]_{\e,Y}$ converges to $x$ and $\e y$ to $0$. Since $\varphi \in C(\Omega; C_\#(Y))$, we can carry over these pointwise convergences to the pointwise convergence
\begin{align*}
\varphi\left([x]_{\es,Y} + \es y +\T_\es(\widetilde{\check{\psi}_\es})\xy , y + \frac{\T_\es(\widetilde{\check{\psi}_\es})(x,y)}{\es} \right) \to \varphi(x, y + \check{\psi}_0(x,y)) = \varphi(x, \psi_0(x,y))
\end{align*}
for every $\xy \in \Omega \times \Yp$. Furthermore, $\Big|\varphi\Big(\T_\e(\widetilde{\psi_\e})(x,y), y+ \frac{\T_\e(\widetilde{\check{\psi}_\e})(x,y)}{\e} \Big) \Big| \leq \norm{\varphi}{C^\infty(\Omega \times Y)}$ for a.e.~$\xy \in \Omega \times \Yp$. Thus, we can apply Lebesgue's dominated convergence theorem and get the strong convergence of $\T_\es(\widetilde{\varphi_{\es,\psi_\es}})$ to $\widetilde{\pO}$ in $L^p(\Omega\times \Yp)$.
Indeed, this argumentation holds for every arbitrary subsequence, too, which implies the strong convergence for the whole sequence.
\end{proof}
The next lemma shows that the transformations by $\psi_\e$ and $\psi_\e^{-1}$ are uniformly continuous. Together with Lemma \ref{lem:TwoScaleTrafoTestFunction}, this allows to translate between the weak two-scale convergence of sequences defined on $\Oe$ and $\hOe$, respectively.
\begin{lemma}\label{lem:TrafoBounded}
Let $p \in (1,\infty)$. Let $u_\e$ and $\hu_\e = u_\e \circ \psi_\e$ be sequences of measurable functions on $\Oe$ and $\hOe$, respectively. Then, the following statements hold:
\begin{enumerate}
\item \label{enum:LpBoundedTrafo}The sequence $u_\e$ is  bounded in $L^p(\Oe)$ 
if and only if the sequence 
$\hu_\e$ is  bounded in $L^p(\hOe)$.
	
\item \label{enum:GradientLpBoundedTrafo}
Let $l \in \{0,2\}$. Then the sequence $\e^{l/2} \nabla u_\e$ is bounded in $L^p(\Oe)$ 
if and only if the sequence $\e^{l/2}\nabla\hu_\e$ is bounded in $L^p(\hOe)$.
\end{enumerate}
\end{lemma}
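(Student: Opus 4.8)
The plan is to prove both equivalences via a change of variables in the integral defining the $L^p$-norm, using the uniform two-sided bounds on the Jacobian $J_\e$ and on $\Psi_\e$ provided by Lemma~\ref{lemma:TwoScalePsi}. For part~\eqref{enum:LpBoundedTrafo}, I would write
\begin{align*}
\norm{u_\e}{L^p(\Oe)}^p = \int\limits_{\Oe} |u_\e(z)|^p \, dz = \inthOe |u_\e(\psi_\e(x))|^p \, J_\e(x) \, dx = \inthOe |\hu_\e(x)|^p \, J_\e(x) \, dx,
\end{align*}
which is legitimate because $\psi_\e : \overline{\hOe} \to \overline{\Oe}$ is a $C^1$-diffeomorphism with $J_\e > 0$. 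Then the bound $c_J \leq J_\e \leq \norm{J_\e}{C(\overline{\hOe})} \leq C$ from Lemma~\ref{lemma:TwoScalePsi} gives
\begin{align*}
c_J \norm{\hu_\e}{L^p(\hOe)}^p \leq \norm{u_\e}{L^p(\Oe)}^p \leq C \norm{\hu_\e}{L^p(\hOe)}^p,
\end{align*}
so each sequence is bounded if and only if the other is, with constants independent of $\e$. This settles~\eqref{enum:LpBoundedTrafo}.

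For part~\eqref{enum:GradientLpBoundedTrafo} the key is the chain rule: from $\hu_\e = u_\e \circ \psi_\e$ we get $\nabla \hu_\e(x) = \Psi_\e(x)^\top (\nabla u_\e)(\psi_\e(x))$, hence $(\nabla u_\e)(\psi_\e(x)) = (\Psi_\e(x)^\top)^{-1} \nabla \hu_\e(x)$. Applying the same change of variables as above to $\e^{l/2}\nabla u_\e$,
\begin{align*}
\norm{\e^{l/2}\nabla u_\e}{L^p(\Oe)}^p = \inthOe \e^{lp/2} \big| (\Psi_\e(x)^\top)^{-1} \nabla \hu_\e(x) \big|^p J_\e(x) \, dx,
\end{align*}
and I would estimate the integrand from above and below using the uniform bounds on $\Psi_\e^{-1}$ (equivalently $(\Psi_\e^\top)^{-1}$, whose operator norm is the same) and on $J_\e$ from Lemma~\ref{lemma:TwoScalePsi}, together with the matching bound for $\Psi_\e$ itself applied to the inverse relation $\nabla \hu_\e(x) = \Psi_\e(x)^\top (\nabla u_\e)(\psi_\e(x))$. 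This yields
\begin{align*}
c \norm{\e^{l/2}\nabla \hu_\e}{L^p(\hOe)}^p \leq \norm{\e^{l/2}\nabla u_\e}{L^p(\Oe)}^p \leq C \norm{\e^{l/2}\nabla \hu_\e}{L^p(\hOe)}^p
\end{align*}
for constants $c, C > 0$ independent of $\e$ (note the factor $\e^{l/2}$ passes through untouched since it does not interact with the change of variables), which gives the stated equivalence. The roles of $l=0$ and $l=2$ are identical here; the distinction only matters later when the compactness theorems are invoked.

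The argument is essentially routine once the uniform Jacobian bounds are in hand, so I do not expect a serious obstacle; the only points requiring a little care are (i) justifying the change of variables — which needs $\psi_\e$ to be a genuine bi-Lipschitz (indeed $C^1$) diffeomorphism on the closure, already part of Definition~\ref{def:Psi}, and the measurability of $u_\e$ so that $\hu_\e$ is measurable, which holds since $\psi_\e$ is a homeomorphism — and (ii) the elementary linear-algebra fact that a matrix and its transpose have the same operator norm, and that a uniform bound on $\Psi_\e$ and $\Psi_\e^{-1}$ controls the distortion of $|\nabla \hu_\e|$ versus $|(\nabla u_\e)\circ \psi_\e|$ in both directions. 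If one wanted to avoid even mentioning the diffeomorphism $\psi_\e^{-1}$ explicitly, one could phrase everything through $\psi_\e$ alone as above, which I would do for cleanliness.
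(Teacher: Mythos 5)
Your proposal is correct and follows essentially the same route as the paper: change of variables via $\psi_\e$, the uniform two-sided bounds on $J_\e$ from Lemma \ref{lemma:TwoScalePsi}, and the chain rule $(\nabla u_\e)\circ\psi_\e = \Psi_\e^{-\top}\nabla\hu_\e$ combined with the uniform bounds on $\Psi_\e$ and $\Psi_\e^{-1}$. The only cosmetic difference is that the paper writes the two inequalities as two separate change-of-variables computations (one via $\psi_\e$, one via $\psi_\e^{-1}$), whereas you obtain both from a single transformed integral; this changes nothing of substance.
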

\begin{proof}

Transforming the following integrals by $\psi_\e$ and using the uniform estimates on $J_\e$ and $J_\e^{-1}$ implies Lemma \ref{lem:TrafoBounded}\eqref{enum:LpBoundedTrafo}
\begin{align*}
\norm{u_\e}{L^p(\Oe)}^p &= \intOe |u_\e(x)|^p dx
=
\inthOe J_\e(x)|\hu_\e(x)|^p dx \leq C \inthOe |\hu_\e(x)|^p dx = C \norm{\hu_\e}{L^p(\hOe)}^p,
\\
\norm{\hu_\e}{L^p(\hOe)}^p &= \inthOe |\hu_\e(x)|^p dx = \intOe J_\e^{-1}(\psi_\e^{-1}(x)) |u_\e(x)|^p dx \leq c_J \intOe |u_\e(x)|^p dx = c_J \norm{u_\e}{L^p(\Oe)}^p.
\end{align*}

From the chain rule, we get $(\nabla u_\e)(\psi_\e(x)) = \Psi_\e^{-\top}(x) \nabla \hu_\e(x)$. Using the uniform estimates for the Jacobians and their inverses (cf.~Definition \ref{def:Psi} and Lemma~\ref{lemma:TwoScalePsi}), we can estimate as follows
\begin{align*}
\norm{\nabla u_\e}{L^p(\Oe)}^p &= \intOe |\nabla u_\e(x)|^p dx = \inthOe J_\e(x) |\Psi_\e^{-\top}(x) \nabla \hue(x)|^p dx
\leq
C \inthOe \norm{\Psi_\e^{-\top}(x)}{}^p |\nabla \hue(x)|^p dx
\\
&\leq
C \inthOe \norm{\Psi_\e^{-\top}(x)}{L^\infty(\hOe)}^p |\nabla \hue(x)|^p dx
\leq
 C \inthOe |\nabla \hue(x)|^p dx= C \norm{\nabla \hue}{L^p(\hOe)}^p,
\\
\norm{\nabla \hue}{L^p(\hOe)}^p &= \inthOe |\nabla \hue(x)|^p dx = \intOe J_\e^{-1}(\psi_\e^{-1}(x)) |\Psi_\e^{\top}(\psi_\e^{-1}(x)) \nabla u_\e(x)|^p dx 
\\
&\leq
c_J \intOe \norm{\Psi_\e^{\top}(\psi_\e^{-1}(x))}{}^p |\nabla u_\e(x)|^p dx
\leq
c_J \intOe \norm{\Psi_\e^{\top}}{L^\infty(\hOe)}^p |\nabla u_\e(x)|^p dx
\\&\leq
C \intOe |\nabla u_\e(x)|^p dx= C \norm{\nabla u_\e}{L^p(\Oe)}^p,
\end{align*}
which yields Lemma \ref{lem:TrafoBounded}\eqref{enum:GradientLpBoundedTrafo}.
\end{proof}

Now, we give a rigorous translation between the weak two-scale convergence of sequences defined on $\Oe$ and the corresponding sequences defined on $\hOe$. First, we prove the following back-transformation, which was proposed in \cite{Pet07a}.
\begin{thm}\label{thm:TrafoLp}
Let $p \in (1,\infty)$. Let $u_\e$ be a sequence in $L^p(\hOe)$ and $\hue = u_\e \circ \psi_\e$.  Then, $\widetilde{u_\e}\TscW{p}\widetilde{u_0}$ for $u_0 \in L^p(\Omega; L^p(\Ypx))$ if and only if $\widetilde{\hue} \TscW{p}\widetilde{\hu_0}$ for $\hu_0 \in L^p(\Omega \times \Yp)$. Moreover, $\hu_0= u_{0, \psi_0}$ holds and equivalently $u_0 = \hu_{0, \psi_0^{-1}}$.
\end{thm}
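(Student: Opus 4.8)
The plan is to translate everything into statements about the unfolding operator and Lebesgue-space convergence via Theorem \ref{thm:T-SC-Equi-Unf}, and to exploit the change-of-variables formula for the transformation $\psi_\e$. By symmetry (the hypotheses on $\psi_\e$ and $\psi_\e^{-1}$ are of the same type, cf.\ Lemma \ref{lemma:TwoScalePsi} and the estimates in Lemma \ref{lem:TrafoBounded}), it suffices to prove one direction, say: if $\widetilde{u_\e} \TscW{p} \widetilde{u_0}$ then $\widetilde{\hue} \TscW{p} \widetilde{\hu_0}$ with $\hu_0 = u_{0,\psi_0}$. First I would record that, by Lemma \ref{lem:TrafoBounded}\eqref{enum:LpBoundedTrafo} together with Proposition \ref{prop:T-SC-Bound}, boundedness of $\widetilde{u_\e}$ in $L^p$ is equivalent to boundedness of $\widetilde{\hue}$ in $L^p$; hence by Proposition \ref{prop:T-SC-SmoothTestF} it is enough to identify the distributional two-scale limit of $\widetilde{\hue}$, i.e.\ to compute $\lim_{\e\to0}\int_{\Od} \widetilde{\hue}(x)\,\varphi_\#(x,x/\e)\,dx$ for $\varphi \in D(\Od;C^\infty_\#(Y))$ and show it equals $\int\!\!\int \widetilde{u_{0,\psi_0}}(x,y)\,\varphi(x,y)\,dy\,dx$.

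The core computation is a change of variables. Writing the integral over $\hOe$ and substituting $z = \psi_\e(x)$ (so $dz = J_\e(x)\,dx$, equivalently $dx = J_\e^{-1}(\psi_\e^{-1}(z))\,dz$), one gets
\begin{align*}
\int_{\hOe} \hue(x)\,\varphi\Big(\psi_\e(x),\tfrac{\psi_\e(x)}{\e}\Big)\,dx
= \int_{\Oe} u_\e(z)\,J_\e^{-1}(\psi_\e^{-1}(z))\,\varphi\Big(z,\tfrac{z}{\e}\Big)\,dz.
\end{align*}
So the problem reduces to understanding the two-scale limit of the left-hand-side integrand, where $\varphi$ has been replaced by the transformed test function $\psi_\e \mapsto \varphi(\psi_\e(\cdot),\psi_\e(\cdot)/\e) = \varphi_{\e,\psi_\e}$. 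By Lemma \ref{lem:TwoScaleTrafoTestFunction}, $\widetilde{\varphi_{\e,\psi_\e}} \Tscs{\infty} \widetilde{\varphi_{\psi_0}}$, i.e.\ the transformed test functions converge strongly two-scale to $\varphi_{\psi_0}(x,y)=\varphi(x,\psi_0(x,y))$. Combining the weak two-scale convergence $\widetilde{u_\e} \TscW{p}\widetilde{u_0}$ with this strong two-scale convergence via Theorem \ref{thm:T-SC-Prod} (using that $\widetilde{u_\e}$ is supported on $\hOe$, whose indicator two-scale converges to $\chi_{Y^*}$ as in Lemma \ref{lemma:TwoScalePsi}), one obtains
\begin{align*}
\lim_{\e\to0}\int_{\hOe}\hue(x)\,\varphi_{\e,\psi_\e}(x)\,dx
= \int_\Omega\!\int_{Y^*} u_0(x,y)\,\varphi(x,\psi_0(x,y))\,dy\,dx.
\end{align*}
Finally, in this last integral I would substitute $y' = \psi_0(x,y)$ for a.e.\ fixed $x$ (so $dy = J_0^{-1}(x,\psi_0^{-1}(x,y'))\,dy'$), which transforms it into $\int_\Omega\!\int_{Y^*_x} u_0(x,\psi_0^{-1}(x,y'))\,\varphi(x,y')\,J_0^{-1}(\cdots)\,dy'\,dx$; matching this against the right-hand side of the change-of-variables identity above (whose integrand carries exactly the Jacobian factor $J_\e^{-1}\circ\psi_\e^{-1}$, converging strongly two-scale to $\widetilde{J_0^{-1}}\circ\psi_0^{-1}$ by Lemma \ref{lemma:TwoScalePsi}) identifies the two-scale limit of $\widetilde{\hue}$ as $\widetilde{u_{0,\psi_0}}\in L^p(\Omega\times Y^*)$, which moreover lies in the right space because $u_0\in L^p(\Omega;L^p(Y^*_x))=L^p(\Omega\times Y^*)$ and composition with $\psi_0$ preserves this by the uniform Jacobian bounds. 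The equivalence $u_0 = \hu_{0,\psi_0^{-1}}$ is then just the pointwise inversion of $\hu_0 = u_{0,\psi_0}$.

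The main obstacle I expect is bookkeeping the approximating domains and the passage between $\hOe$, $\Oe$, $\Omega$ and $\Od$: the sets $\Oe = \psi_\e(\hOe)$ need not sit inside $\Omega$, the unfolding operator is defined relative to a fixed macroscopic domain, and the supports of $\widetilde{u_\e}$, $\widetilde{\hue}$ live on $\e$-dependent perforated sets. One must check that the strong two-scale convergence of the indicator functions $\chi_{\hOe}$ (resp.\ $\chi_{\Oe}$) to $\chi_{Y^*}$ (resp.\ to the transformed cell indicator) is compatible with applying Theorem \ref{thm:T-SC-Prod}, and that working over the enlarged domain $\Od$ does not spoil the test-function class; but since $|\Oe\setminus\Omega|\le \e C$ and all limits are supported in $Q\subset \Omega\times Y$, these are controlled. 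The genuinely analytic input — the strong two-scale convergence of the transformed test functions — has already been isolated in Lemma \ref{lem:TwoScaleTrafoTestFunction}, so what remains is careful but routine.
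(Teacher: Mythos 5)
Your overall strategy --- reduce to distributional two-scale convergence via the boundedness transfer of Lemma \ref{lem:TrafoBounded}, change variables with $\psi_\e$, and combine Lemma \ref{lem:TwoScaleTrafoTestFunction} with the strong two-scale convergence of the Jacobians through Theorem \ref{thm:T-SC-Prod} --- is the paper's strategy, but you have chosen the wrong direction for the direct computation, and the argument does not close as written. The paper computes the limit of $\intOe u_\e(x)\varphi\xxeps dx=\inthOe J_\e(x)\hue(x)\pe(x)\,dx$ \emph{assuming} $\widetilde{\hue}\TscW{p}\widetilde{\hu_0}$: there the unknown weak factor is $\widetilde{\hue}$, the remaining factors $\widetilde{J_\e}$ and $\widetilde{\pe}$ converge strongly two-scale, and the pairing on the $\Oe$-side is against a genuine test function $\varphi\xxeps$. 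In your direction you must compute $\lim_{\e\to0}\intO\widetilde{\hue}(x)\varphi\xxeps dx$ for \emph{standard} test functions, but your change of variables instead produces $\inthOe\hue(x)\pe(x)\,dx$: you end up pairing $\hue$ against the $\e$-dependent transformed test functions $\pe$, which are not of the form $\phi\left(\cdot_x,\tfrac{\cdot_x}{\e}\right)$. Knowing the limits of these pairings does not by itself yield $\widetilde{\hue}\TscW{p}\widetilde{\hu_0}$; to close the gap one still needs a compactness step (extract a two-scale convergent subsequence of $\widetilde{\hue}$ via Proposition \ref{prop:PorousCompact}) together with an identification argument. This is exactly how the paper proceeds: it proves the implication from $\widetilde{\hue}$ to $\widetilde{u_\e}$ by direct computation and obtains the converse by compactness, identification of the subsequential limit through the already-proven direction, and uniqueness of two-scale limits.

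Two further points. First, the ``by symmetry'' reduction is not available: $\psi_\e^{-1}$ maps the locally periodic domain onto the periodic one and is not itself a locally periodic transformation in the sense of Definition \ref{def:Psi}, and no analogue of Lemma \ref{lem:TwoScaleTrafoTestFunction} for test functions composed with $\psi_\e^{-1}$ is established; this asymmetry is precisely why the two implications require different arguments. Second, you invoke the strong two-scale convergence of $J_\e^{-1}\circ\psi_\e^{-1}$ ``by Lemma \ref{lemma:TwoScalePsi}'', but that lemma only gives $\widetilde{J_\e^{-1}}\Tscs{\infty}\widetilde{J_0^{-1}}$ on the reference domains $\hOe$; the convergence of the composition, a sequence on $\Oe$, is an instance of the very transformation statement being proved (the paper deduces it \emph{from} Theorem \ref{thm:TrafoLp} inside the proof of Theorem \ref{thm:TrafoStrongTsc}), so using it here is circular. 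Relatedly, your displayed limit $\intOYp u_0(x,y)\,\varphi(x,\psi_0(x,y))\,dy\,dx$ cannot be correct: $u_0$ lives on $Q$, so its second argument ranges over $\Ypx$ rather than $\Yp$, and the correct limit of $\inthOe\hue\,\pe\,dx$ carries a Jacobian factor after transforming the cell integral to $\Ypx$.
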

\begin{proof}
First, we assume that $\widetilde{\hu_\e}$ two-scale converges to $\widetilde{\hu_0}$ in $L^p(\Omega)$ for $1<p<\infty$. Proposition \ref{prop:T-SC-Bound} implies that $\widetilde{\hu_\e}$ is bounded and by Lemma \ref{lem:TrafoBounded}\eqref{enum:LpBoundedTrafo}, $\widetilde{u_\e}$ is bounded as well.  Moreover, $\widetilde{\hu_{0,\psi_0^{-1}}} \in L^p(\Od \times Y)$.
Therefore, it is enough to show the distributional two-scale convergence, i.e.
\begin{align}\label{eq:proofTrafoLp}
\lim\limits_{\e \to 0}\int\limits_\Od \widetilde{u_\e}(x) \varphi\xxeps dx
=
\int\limits_\Od \intY \widetilde{\hu_{0,\psi_0^{-1}}}\xy \varphi\xy dy dx
\end{align}
for every smooth function $\varphi \in D(\Od; C^\infty_\#(Y))$. We transform the integrand of the left-hand side by $\psi_\e$
\begin{align*}
\int\limits_\Od \widetilde{u_\e}(x) \varphi\xxeps dx
=
\intOe u_\e(x) \varphi\xxeps dx
=
\inthOe J_\e(x) \hu_\e(x) \pe (x) dx
=
\intO \widetilde{J_\e}(x) \widetilde{\hu_\e}(x) \widetilde{\pe}(x) dx.
\end{align*}
We note that $\varphi \in C(\Omega;C_\#(Y))$ and Lemma \ref{lem:TwoScaleTrafoTestFunction} implies $\widetilde{\pe} \TscS{\infty}\widetilde{\pO}$.
Using the strong two-scale convergence $\widetilde{J_\e} \Tscs{\infty} \widetilde{J_0}$, we can pass to the limit $\e \to0$
\begin{align*}
\lim\limits_{\e \to 0} \intO \widetilde{J_\e}(x) \widetilde{\hu_\e}(x) \widetilde{\pe}(x) dx
=
\intOY \widetilde{J_0}\xy \widetilde{\hu_0}\xy \widetilde{\pO}\xy dydx.
\end{align*}
Then, we transform the $Y$-integral back with $\psi_0(x,\cdot_y)$
\begin{align*}
\intOY \widetilde{J_0}\xy \widetilde{\hu_0}\xy \widetilde{\pO}\xy dydx
=
\intOYp J_0\xy \hu_0\xy \varphi(x, \psi_0(x,y)) dydx
\\
=
\intOYpx \hu_0(x, \psi_0^{-1}\xy) \varphi\xy dydx
=
\int\limits_{\Od} \! \intY \widetilde{\hu_{0, \psi_0^{-1}}}\xy \varphi\xy dydx.
\end{align*}
Combining these equations shows \eqref{eq:proofTrafoLp}.

Now, we assume that $\widetilde{u_\e} \TscW{p}\widetilde{u_0}$. By using Proposition \ref{prop:T-SC-Bound} and Lemma \ref{lem:TrafoBounded}\eqref{enum:LpBoundedTrafo}, we obtain the boundedness of $\hu_\e$. Then, Proposition \ref{prop:PorousCompact} gives the existence of a subsequence $\es$ and a function $\hu_0 \in L^p(\Omega \times \Yp)$ such that $\widetilde{\hu_\es}\TscW{p}\widetilde{\hu_0}$. The previous argumentation applied to this subsequence yields $\hu_0 = u_{0,\psi_0}$. Since this argumentation also holds for every subsequence, the whole sequence $\widetilde{\hu_\e}$ two-scale converges weakly to $\widetilde{\hu_0}$ for $\hu_0 = u_{0,\psi_0}$.
\end{proof}

Note that Theorem \ref{thm:TrafoLp} transfers $\chi_\hOe \TscW{\infty} \chi_\Yp$ into $\chi_\Oe \Tscw{\infty} \chi_Q$. Thus, $Q$ and $\Ypx$ can be defined by the two-scale limit of $\chi_\Oe$, which shows that $Q$ and $\Ypx$ are independent of the choice of the periodic reference domain $\hOe$ and the transformations $\psi_\e$ and $\psi_0$.

In the next step, we consider the weak two-scale convergence for weakly differentiable functions. We start with the case of large gradients, i.e.~$\e \nabla u_\e \leq C$, and show that the same transformation rule as for the functions themselves hold.
\begin{thm}\label{thm:TrafoWp2} 
Let $p \in (1,\infty)$. Let $u_\e$ be a sequence in $W^{1,p}(\Oe)$ and $\hue = u_\e \circ \psi_\e$ a sequence in $W^{1,p}(\hOe)$ such that $u_\e$ is bounded in $L^p(\Oe)$. 
Then, $\e \widetilde{\nabla u_\e} \TscW{p}\widetilde{\nabla_y u_0}$ for $u_0 \in L^p(\Omega; W^{1,p}_\#(\Ypx))$ 
if and only if $\e\widetilde{\nabla \hue} \TscW{p}\widetilde{\nabla_y \hu_0}$ for $\hu_0 \in L^p(\Omega; W^{1,p}_\#(\Yp))$. Moreover,  $\hu_0 = u_{0,\psi_0}$ holds and equivalently $u_0 = \hu_{0,\psi_0^{-1}}$.
\end{thm}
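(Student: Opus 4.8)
The plan is to push everything to the periodic reference domain through the chain rule $(\nabla u_\e)\circ\psi_\e=\Psi_\e^{-\top}\nabla\hue$ and then feed the result into the periodic compactness result, Proposition~\ref{prop:PorousCompact}\eqref{item:PorousCompact2}. I read the hypothesis as the full ``$W^{1,p}_\#$-package'' (as in Theorem~\ref{thm:T-SC-W1pComp2}): on one side $\widetilde{u_\e}\TscW{p}\widetilde{u_0}$ and $\e\widetilde{\nabla u_\e}\TscW{p}\widetilde{\nabla_y u_0}$ with $u_0\in L^p(\Omega;W^{1,p}_\#(\Ypx))$, on the other $\widetilde{\hue}\TscW{p}\widetilde{\hu_0}$ and $\e\widetilde{\nabla\hue}\TscW{p}\widetilde{\nabla_y\hu_0}$ with $\hu_0\in L^p(\Omega;W^{1,p}_\#(\Yp))$; the function part is in any case recovered from $L^p$-boundedness via Theorem~\ref{thm:T-SC-LpComp}, the support remark after Theorem~\ref{thm:TrafoLp}, and Theorem~\ref{thm:TrafoLp} itself.

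First I would do the boundedness bookkeeping. By Proposition~\ref{prop:T-SC-Bound}, weak two-scale convergence of $\e\widetilde{\nabla u_\e}$ forces $\e\nabla u_\e$ bounded in $L^p(\Oe)$, and analogously on $\hOe$; combined with $\norm{u_\e}{L^p(\Oe)}\le C$ and Lemma~\ref{lem:TrafoBounded}, each hypothesis gives $\norm{\hue}{L^p(\hOe)}+\e\norm{\nabla\hue}{L^p(\hOe)}\le C$. Hence Proposition~\ref{prop:PorousCompact}\eqref{item:PorousCompact2} applies to $\hue$: every subsequence has a further subsequence $\es$ along which $\widetilde{\hu_\es}\TscW{p}\widetilde{\hat w_0}$ and $\es\widetilde{\nabla\hu_\es}\TscW{p}\widetilde{\nabla_y\hat w_0}$ for some $\hat w_0\in L^p(\Omega;W^{1,p}_\#(\Yp))$.

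The heart of the argument is the identification: if $\e\widetilde{\nabla\hue}\TscW{p}\widetilde{\nabla_y\hu_0}$, then $\e\widetilde{\nabla u_\e}\TscW{p}\widetilde{\nabla_y u_0}$ with $u_0\coloneqq\hu_{0,\psi_0^{-1}}$. For a vector test function $\varphi\in D(\Od;C^\infty_\#(Y))^N$, restricting $\int_\Od\e\widetilde{\nabla u_\e}(x)\cdot\varphi\xxeps dx$ to $\Oe$, changing variables by $\psi_\e$, and using $(\nabla u_\e)\circ\psi_\e=\Psi_\e^{-\top}\nabla\hue$ together with $(\Psi_\e^{-\top}a)\cdot b=a\cdot(\Psi_\e^{-1}b)$ turns it into $\int_\Omega\e\widetilde{\nabla\hue}(x)\cdot w_\e(x)dx$ with $w_\e\coloneqq\widetilde{J_\e\Psi_\e^{-1}\pe}$. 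By Lemma~\ref{lemma:TwoScalePsi} ($\widetilde{J_\e}\Tscs{\infty}\widetilde{J_0}$, $\widetilde{\Psi_\e^{-1}}\Tscs{\infty}\widetilde{\Psi_0^{-1}}$), Lemma~\ref{lem:TwoScaleTrafoTestFunction} ($\widetilde{\pe}\Tscs{\infty}\widetilde{\pO}$) and Theorem~\ref{thm:T-SC-Prod}, one gets $w_\e\Tscs{\infty}\widetilde{J_0\Psi_0^{-1}\pO}$. Translating the pairing through the unfolding operator (Theorem~\ref{thm:NormEqualityUnfold} makes $\int_\Omega\e\widetilde{\nabla\hue}\cdot w_\e=\int_{\Omega\times Y}\T_\e(\e\widetilde{\nabla\hue})\cdot\T_\e(w_\e)$; Theorem~\ref{thm:T-SC-Equi-Unf} gives $\T_\e(\e\widetilde{\nabla\hue})\rightharpoonup\widetilde{\nabla_y\hu_0}$ in $L^p$ and $\T_\e(w_\e)\to\widetilde{J_0\Psi_0^{-1}\pO}$ in $L^{p'}$), the integral converges to $\intOYp J_0(x,y)\,\nabla_y\hu_0(x,y)\cdot\Psi_0^{-1}(x,y)\pO(x,y)\,dy\,dx$. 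Finally, substituting $y=\psi_0(x,z)$ in $\intOYpx\nabla_y u_0(x,y)\cdot\varphi(x,y)\,dy\,dx$ with $dy=J_0\,dz$ and the chain rule $(\nabla_y u_0)(x,\psi_0(x,z))=\Psi_0^{-\top}(x,z)\,\nabla_z\hu_0(x,z)$ (legitimate by the $L^\infty$-in-$x$, $C^1$-in-$y$ regularity of $\psi_0$) produces exactly the same expression. Since $u_0\in L^p(\Omega;W^{1,p}_\#(\Ypx))$ by the very definition of that space, this yields $\e\widetilde{\nabla u_\e}\TscW{p}\widetilde{\nabla_y u_0}$; together with Theorem~\ref{thm:TrafoLp} for the functions it gives the $\Oe$-package, with $\hu_0=u_{0,\psi_0}$, equivalently $u_0=\hu_{0,\psi_0^{-1}}$.

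It remains to assemble the equivalence. One implication is the heart step with $\hu_0$ supplied for the whole sequence by Proposition~\ref{prop:PorousCompact}\eqref{item:PorousCompact2} (its $\es$-independence following from Theorem~\ref{thm:TrafoLp}). For the converse, starting from the $\Oe$-package, the bookkeeping plus Proposition~\ref{prop:PorousCompact}\eqref{item:PorousCompact2} gives, along any subsequence, a further one with $\widetilde{\hu_\es}\TscW{p}\widetilde{\hat w_0}$ and $\es\widetilde{\nabla\hu_\es}\TscW{p}\widetilde{\nabla_y\hat w_0}$; applying the heart step to this subsequence and comparing with the assumed $u_0$ by uniqueness of two-scale limits forces $\hat w_0=u_{0,\psi_0}$, which is independent of the subsequence, so the whole sequence converges. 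I expect the heart step to be the main obstacle: chaining Lemma~\ref{lemma:TwoScalePsi}, Lemma~\ref{lem:TwoScaleTrafoTestFunction} and Theorem~\ref{thm:T-SC-Prod} into strong two-scale convergence of the composite coefficient--test field $w_\e$, and then carrying out the change of variables on the limit level, i.e.\ the chain rule for the merely $L^\infty$-regular-in-$x$ limit transformation $\psi_0$ --- which is exactly what makes the two-scale transformation rule for large gradients coincide with the one for the functions themselves.
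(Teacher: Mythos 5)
Your proposal is correct and follows essentially the same route as the paper: boundedness transfer via Proposition~\ref{prop:T-SC-Bound} and Lemma~\ref{lem:TrafoBounded}, reduction to distributional two-scale convergence, change of variables by $\psi_\e$ with the chain rule, passage to the limit using the strong two-scale convergence of $\widetilde{J_\e}$, $\widetilde{\Psi_\e^{-1}}$ and $\widetilde{\pe}$ against the weak limit of $\e\widetilde{\nabla\hue}$, back-transformation of the $Y$-integral by $\psi_0$, and the subsequence argument through Proposition~\ref{prop:PorousCompact}\eqref{item:PorousCompact2} for the converse. The only cosmetic differences are that you route the product limit explicitly through the unfolding operator (the paper invokes Theorem~\ref{thm:T-SC-Prod}, which is proved the same way) and that you carry the function convergence $\widetilde{u_\e}\TscW{p}\widetilde{u_0}$ along as part of the hypothesis, which the statement does not require.
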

\begin{proof}
First, we assume that $\e \widetilde{\nabla \hu_\e}  \TscW{p} \widetilde{\nabla_y \hu_0}$.
Proposition \ref{prop:T-SC-Bound} implies that $\e  \widetilde{\nabla\hu_\e}$ is bounded and by Lemma \ref{lem:TrafoBounded}\eqref{enum:GradientLpBoundedTrafo}, $\e \widetilde{\nabla u_\e}$ is bounded as well. Moreover, $\!\!\!\!\!\! \widetilde{ \ \ \ \nabla_y \hu_{0,\psi_0^{-1}}} \in L^p(\Od \times Y)$.
Therefore, it is enough to show the distributional two-scale convergence, i.e.

\begin{align}\label{eq:proofTrafoWp2}
\lim\limits_{\e \to 0}\int\limits_\Od \e\widetilde{\nabla u_\e}(x) \cdot \varphi\xxeps dx
=
\int\limits_\Od \intY \!\!\!\!\!\! \widetilde{ \ \ \ \nabla_y \hu_{0,\psi_0^{-1}}}\xy \cdot \varphi\xy dy dx
\end{align}
for test functions $\varphi \in D(\Omega;C^\infty_\#(Y))^N$, where $\nabla_y \hu_{0,\psi_0^{-1}}$ denotes the gradient of $y \mapsto \hu_0(x,\psi_0^{-1}(x,y))$.
We transform the integral on the left-hand side by $\psi_\e$ and use the chain rule, which gives $\nabla u_\e(\psi_\e^{-1}(x)) = \Psi_\e^{-\top}(x) \nabla \hu_\e(x)$. Thus, we get
\begin{align*}
\int\limits_\Od \e\widetilde{\nabla u_\e}(x) \cdot\varphi\xxeps dx = \intO \widetilde{J_\e}(x) \widetilde{\Psi_\e^{-\top}}(x)\e\widetilde{\nabla \hue}(x) \cdot \widetilde{\pe}(x)dx.
\end{align*}
In order to pass to the limit $\e \to 0$, we proceed as in the proof of Theorem \ref{thm:TrafoLp} and additionally use the strong two-scale convergence of $\widetilde{\Psi_\e^{-\top}}$. In the limit, we transform the $Y$-integral back  with $\psi_0^{-1}$ and obtain
\begin{align*}
&\lim\limits_{\e \to 0}\int\limits_{\Od} \e \widetilde{\nabla u_\e}(x) \cdot  \varphi\xxeps dx
=
\lim\limits_{\e \to 0}\intO \widetilde{J_\e}(x)\widetilde{\Psi_\e^{-\top}}(x) \e\widetilde{\nabla \hu_\e}(x) \cdot \widetilde{\pe}(x)dx
\\
&=
\intOY \widetilde{J_0}\xy\widetilde{\Psi_0^{-\top}}\xy \widetilde{\nabla_y \hu_0}\xy \cdot \widetilde{\pO}\xy dy dx
=
\int\limits_{\Od} \intY \!\!\!\!\!\! \widetilde{ \ \ \ \nabla_y \hu_{0,\psi_0^{-1}}}\xy \cdot \varphi\xy dy dx.
\end{align*}

Now, we assume that $\e \widetilde{\nabla u_\e} \TscW{p}\widetilde{\nabla_y u_0}$. Using Proposition \ref{prop:T-SC-Bound} and Lemma \ref{lem:TrafoBounded}\eqref{enum:GradientLpBoundedTrafo}, we obtain the boundedness of $\e \nabla \hu_\e$. Moreover, Proposition \ref{lem:TrafoBounded}\eqref{enum:LpBoundedTrafo} transfers the boundedness of $u_\e$ in $L^p(\Oe)$ to the boundedness of $\hu_\e$ in $L^p(\hOe)$.
Then, Proposition \ref{prop:PorousCompact} implies the existence of $\hu_0 \in L^2(\Omega;H^1_\#(\Yp))$ and a subsequence $\es$ such that $\es \widetilde{\nabla u_\es}\TscW{p}\widetilde{\nabla_y \hu_0}$. The previous argumentation applied to this subsequence yields $\hu_0= u_{0,\psi_0}$. Since this argumentation holds for every subsequence, it holds for the whole sequence.
\end{proof}

The last part of the two-scale transformation is the case of small gradients, i.e.~\mbox{$\norm{\nabla u_\e}{L^p(\Oe)} \leq C$}. Following the approach of the case of large gradients $\Psi_0^{-\top}\xy \nabla_x \hu_0(x) + \Psi_0^{-\top}\xy \nabla_y \hu_1\xy$, which has to be transformed back. However, the Jacobian $\Psi_0^{-\top}$ only vanishes by the back-transformation of the $y$-gradient and remains in front of the $x$-gradient. This remaining Jacobian is basically the reason why the back-transformation did not yield a transformationally independent limit problem in the hitherto existing works.
In order to overcome this problem, we separate the purely macroscopic part of $\Psi_0^{-\top}(x) \nabla_x \hu_0(x)$ and put the remaining part into the transformation rule for the $y$-gradient. Thus, we can prove the following new transformation rule.
\begin{thm}\label{thm:TrafoWp0}
Let $p \in (1,\infty)$ and assume that $Y^*_\#$ is connected. Let $u_\e$ be a sequence in $W^{1,p}(\Oe)$ and $\hue = u_\e \circ \psi_\e$ a sequence in $W^{1,p}(\hOe)$ such that $u_\e$ is bounded in $L^p(\Oe)$.
Then, $\widetilde{\nabla u_\e} \TscW{p}\chi_Q \widetilde{\nabla_x u_0} + \widetilde{\nabla_y \hu_1}$ for $u_0 \in W^{1,p}(\Omega)$ and $u_1 \in L^p(\Omega; W^{1,p}_\#(\Ypx)/\R)$ if and only if $\widetilde{\nabla \hue} \TscW{p} \chi_\Yp \nabla_x\hu_0 + \widetilde{\nabla_y \hu_1}$ for $\hu_0 \in W^{1,p}(\Omega)$ and $\hu_1 \in L^p(\Omega; W^{1,p}_\#(\Yp)/\R)$. Moreover, $\hu_0 = u_0$ holds and also $u_1 = \hu_{1,\psi_0^{-1}} + \check{\psi}_0^{-1} \cdot \nabla_x \hat{u}_0$, which is equivalent to $\hu_1 = u_{1,\psi_0} + \check{\psi}_0\cdot \nabla_x u_0$.
\end{thm}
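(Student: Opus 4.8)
The strategy mirrors the proofs of Theorems~\ref{thm:TrafoLp} and~\ref{thm:TrafoWp2}: reduce each implication to the distributional two-scale convergence (using Proposition~\ref{prop:T-SC-Bound} to recover boundedness and Proposition~\ref{prop:PorousCompact}\eqref{item:PorousCompact} for compactness), transform the test integral by $\psi_\e$, pass to the limit with the strong two-scale convergences of Lemma~\ref{lemma:TwoScalePsi} and Lemma~\ref{lem:TwoScaleTrafoTestFunction}, and transform the $Y$-integral back by $\psi_0$. Assume first that $\widetilde{\nabla\hue}\TscW{p}\chi_\Yp\nabla_x\hu_0+\widetilde{\nabla_y\hu_1}$. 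By Lemma~\ref{lem:TrafoBounded} the sequence $\nabla u_\e$ is bounded in $L^p(\Oe)$, so after passing to a subsequence we get limits $u_0\in W^{1,p}(\Omega)$ and $u_1\in L^p(\Omega;W^{1,p}_\#(\Ypx)/\R)$ from Proposition~\ref{prop:PorousCompact}\eqref{item:PorousCompact}; since the argument will identify these uniquely the whole sequence converges. The key computation: using the chain rule $\nabla u_\e(\psi_\e^{-1}(x))=\Psi_\e^{-\top}(x)\nabla\hue(x)$, one rewrites
\begin{align*}
\int\limits_\Od \widetilde{\nabla u_\e}(x)\cdot\varphi\xxeps dx = \intO \widetilde{J_\e}(x)\,\widetilde{\Psi_\e^{-\top}}(x)\,\widetilde{\nabla\hue}(x)\cdot\widetilde{\pe}(x)\,dx
\end{align*}
for $\varphi\in D(\Omega;C^\infty_\#(Y))^N$, and passes to the limit using $\widetilde{J_\e}\Tscs{\infty}\widetilde{J_0}$, $\widetilde{\Psi_\e^{-\top}}\Tscs{\infty}\widetilde{\Psi_0^{-\top}}$, $\widetilde{\pe}\Tscs{\infty}\widetilde{\pO}$, and $\widetilde{\nabla\hue}\TscW{p}\chi_\Yp\nabla_x\hu_0+\widetilde{\nabla_y\hu_1}$ together with Theorem~\ref{thm:T-SC-Prod}. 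This gives the limit $\int\!\!\int_{\Omega\times\Yp}J_0\Psi_0^{-\top}(\nabla_x\hu_0+\nabla_y\hu_1)\cdot\varphi_{\psi_0}$, which after back-transforming the $y$-integral by $\psi_0(x,\cdot)$ becomes $\int\!\!\int_{\Omega\times\Ypx}\big(\Psi_0^{-\top}(\nabla_x\hu_0+\nabla_y\hu_1)\big)_{\psi_0^{-1}}\cdot\varphi$.

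\textbf{The key algebraic identity.} The heart of the theorem — and the reason the new gradient rule works — is to recognise that the remaining Jacobian in $\Psi_0^{-\top}\nabla_x\hu_0$ can be absorbed into a $y$-gradient. Concretely, I claim that for a.e.\ $x$ and $y\in\Ypx$,
\begin{align*}
\big(\Psi_0^{-\top}(x,\cdot)\,\nabla_x\hu_0(x)\big)\big(\psi_0^{-1}(x,y)\big) = \nabla_x\hu_0(x) + \nabla_y\big(\check{\psi}_0^{-1}(x,y)\cdot\nabla_x\hu_0(x)\big),
\end{align*}
since $\nabla_y\psi_0^{-1}(x,y)=(\Psi_0(x,\psi_0^{-1}(x,y)))^{-1}$ and $\check{\psi}_0^{-1}(x,y)=\psi_0^{-1}(x,y)-y$, so that $\nabla_y(\check{\psi}_0^{-1}(x,y)\cdot c) = \big((\Psi_0^{-\top})_{\psi_0^{-1}} - \1\big)c$ for constant $c=\nabla_x\hu_0(x)$. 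Likewise $\big(\Psi_0^{-\top}\nabla_y\hu_1\big)_{\psi_0^{-1}} = \nabla_y(\hu_{1,\psi_0^{-1}})$ by the chain rule, exactly as in Theorem~\ref{thm:TrafoWp2}. Substituting, the limit integral equals $\int\!\!\int_{\Omega\times\Ypx}\big(\nabla_x\hu_0 + \nabla_y(\hu_{1,\psi_0^{-1}} + \check{\psi}_0^{-1}\cdot\nabla_x\hu_0)\big)\cdot\varphi$, i.e.\ $\widetilde{\nabla u_\e}\TscW{p}\chi_Q\widetilde{\nabla_x\hu_0}+\widetilde{\nabla_y\hu_1}$ with $u_0=\hu_0$ and $u_1=\hu_{1,\psi_0^{-1}}+\check{\psi}_0^{-1}\cdot\nabla_x\hu_0$. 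One must check that this $u_1$ genuinely lies in $L^p(\Omega;W^{1,p}_\#(\Ypx)/\R)$: the $Y$-periodicity of $y\mapsto\check{\psi}_0^{-1}(x,y)$ (from Definition~\ref{def:Psi}) guarantees $\check{\psi}_0^{-1}\cdot\nabla_x\hu_0$ is $Y$-periodic in $y$, and the uniform bounds on $\Psi_0,\Psi_0^{-1},J_0$ from Lemma~\ref{lemma:TwoScalePsi} give the required integrability; the class is well-defined modulo constants since $\nabla_x\hu_0(x)$ is $y$-constant.

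\textbf{The converse and closing.} For the reverse implication, assume $\widetilde{\nabla u_\e}\TscW{p}\chi_Q\widetilde{\nabla_x u_0}+\widetilde{\nabla_y\hu_1}$. Lemma~\ref{lem:TrafoBounded} transfers boundedness of $\nabla\hue$, and Proposition~\ref{prop:PorousCompact}\eqref{item:PorousCompact} yields a subsequence with limits $\hu_0\in W^{1,p}(\Omega)$, $\hu_1\in L^p(\Omega;W^{1,p}_\#(\Yp)/\R)$; the forward direction applied to this subsequence forces $u_0=\hu_0$ and $u_1=\hu_{1,\psi_0^{-1}}+\check{\psi}_0^{-1}\cdot\nabla_x\hu_0$, which is equivalent (solving for $\hu_1$) to $\hu_1=u_{1,\psi_0}+\check{\psi}_0\cdot\nabla_x u_0$, and since this holds for every subsequence it holds for the full sequence. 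I expect the \emph{main obstacle} to be purely bookkeeping rather than conceptual: carefully tracking the two chain-rule identities through the $\psi_0$/$\psi_0^{-1}$ substitution in the non-cylindrical spaces, and verifying that the separated term $\check{\psi}_0^{-1}\cdot\nabla_x\hu_0$ has the claimed regularity and periodicity so that the decomposition $\chi_Q\widetilde{\nabla_x u_0}+\widetilde{\nabla_y u_1}$ is legitimate — in particular that passing to the $\Od$ enlargement causes no trouble, which follows as in Theorem~\ref{thm:TrafoLp} since all relevant supports lie in $\Omega\times\Ypx$.
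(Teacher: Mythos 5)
Your proof is correct and follows essentially the same route as the paper: reduce to the distributional two-scale convergence via boundedness, transform the test integral by $\psi_\e$, pass to the limit using the strong two-scale convergences of $\widetilde{J_\e}$, $\widetilde{\Psi_\e^{-\top}}$ and $\widetilde{\pe}$, and absorb the residual Jacobian acting on $\nabla_x\hu_0$ into a $y$-gradient via the identity $\Psi^{-\top}_{0,\psi_0^{-1}} = \1 + \nabla_y\check{\psi}_0^{-1}$, which is exactly the paper's key step, with the same subsequence argument on the periodic domain for the converse. The only blemish is that in the forward direction you invoke Proposition~\ref{prop:PorousCompact}\eqref{item:PorousCompact} for the sequence on $\Oe$ although it is stated for $\hOe$; this step is superfluous anyway, since your explicit computation of the distributional limit together with the transferred boundedness already yields the weak two-scale convergence of the whole sequence.
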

\begin{proof}
First, we assume that $\widetilde{\nabla \hu_\e}\TscW{p} \chi_{\Yp}\hu_0 + \widetilde{\nabla_y \hu_1}$. 
Proposition \ref{prop:T-SC-Bound} implies that $\widetilde{\nabla\hu_\e}$ is bounded and by Lemma \ref{lem:TrafoBounded}\eqref{enum:GradientLpBoundedTrafo}, $\widetilde{\nabla u_\e}$ is bounded as well.  Moreover, $\widetilde{\nabla_y u_1} \in L^p(\Od \times Y)$ for $u_1 = \hu_{1,\psi_0^{-1}} + \check{\psi}_0^{-1} \cdot \nabla_x \hat{u}_0$. Therefore, it is enough to show the distributional two-scale convergence, i.e.
\begin{align}\label{eq:proofTrafoWp0}
\lim\limits_{\e \to 0}&\int\limits_\Od \widetilde{\nabla u_\e}(x) \cdot \varphi\xxeps dx =
\int\limits_\Od \intY \Big(\chi_Q\xy \widetilde{\nabla_x \hu_0}(x) + \widetilde{\nabla_y u_1} \xy \Big) \cdot \varphi\xy dy dx
\end{align}
for $u_1 = \hu_{1,\psi_0^{-1}} + \check{\psi}_0^{-1} \cdot \nabla_x \hat{u}_0$ and 
for test functions $\varphi \in D(\Omega;C^\infty_\#(Y))^N$.
We transform the integral on the left-hand side and pass to the limit $\e \to 0$ like in the proof of Theorem~\ref{thm:TrafoWp2}. After transforming the $Y$-integral back, we use $\Psi^{-\top}_{0,\psi_0^{-1}}\xy = \Psi_0^{-\top}(x,\psi_0^{-1}\xy) = \nabla_y \psi_0^{-1}\xy = \1 + \nabla_y \check{\psi}_0^{-1} \xy$ 
\begin{align*}
\lim\limits_{\e \to 0}&\int\limits_\Od \widetilde{\nabla u_\e}(x) \cdot \varphi\xxeps dx
=
\lim\limits_{\e \to 0}\intO \widetilde{J_\e}(x) \widetilde{\Psi_\e^{-\top}}(x) \widetilde{\nabla \hu_\e}(x) \cdot \widetilde{\pe}(x) dx
\\
&=
\intOY \widetilde{J_0}\xy \widetilde{\Psi_0^{-\top}}\xy (\chi_\Yp(x)\nabla_x \hu_0(x) + \widetilde{\nabla_y \hu_1}\xy ) \cdot \widetilde{\pO}\xy dy dx
\\
&=
\int\limits_\Od \intY \Big(\widetilde{\Psi^{-\top}_{0,\psi_0^{-1}}}\xy \chi_Q\xy \widetilde{\nabla_x \hu_0}(x) + \hspace{-0.45cm} \widetilde{\ \ \ \ \nabla_y \hu_{1,\psi_0^{-1}}}\xy \Big) \cdot \varphi\xy dy dx
\\
&=
\int\limits_\Od \intY \Big(\chi_Q\xy \widetilde{\nabla_x \hu_0}(x) + \widetilde{\nabla_y \check{\psi}_0^{-1}} \xy \widetilde{\nabla_x \hu_0}(x) + \hspace{-0.45cm} \widetilde{\ \ \ \ \nabla_y \hu_{1,\psi_0^{-1}}}\xy \Big) \cdot \varphi\xy dy dx,
\end{align*}
where $\nabla_y \hu_{1,\psi_0^{-1}}$ denotes the gradient of $y \mapsto \hu_1(x, \psi_0^{-1}\xy)$. Thus, $\widetilde{\nabla u_\e} \TscW{p}\chi_Q\widetilde{\nabla_x u_0} + \widetilde{\nabla_y u_1}$ with $u_0=\hu_0$ and $u_1 = \hu_{1,\psi_0^{-1}} + \check{\psi}_0^{-1} \cdot \nabla_x \hat{u}_0$.

Now, we assume that $\widetilde{\nabla u_\e} \TscW{p}\chi_Q \nabla_x u_0 + \widetilde{\nabla_y u_1}$. Then, we obtain $\widetilde{\hu_\e} \TscW{p}\chi_\Yp\nabla_x \hu_0+ \widetilde{\nabla \hu_1}$ with $\hu_0= u_0$ and $\hu_1 = u_{1, \psi_0} - \check{\psi}^{-1}_{0, \psi_0} \cdot \nabla_x u_0$ by the same argumentation as in the proof of Theorem \ref{thm:TrafoWp2}. Rewriting $\check{\psi}_0^{-1} (x,\psi_0\xy) = \psi_0^{-1} (x,\psi_0\xy) - \psi_0\xy = y -\psi_0\xy = -\check{\psi}_0\xy$ gives $\hu_1 = u_{1,\psi_0}+ \check{\psi}_0 \cdot \nabla_x u_0$.
\end{proof}

With the transformation results Theorem \ref{thm:TrafoLp}, Theorem \ref{thm:TrafoWp2} and Theorem \ref{thm:TrafoWp0}, we can translate the two-scale compactness results for periodic domains Theorem \ref{thm:T-SC-LpComp}, Theorem \ref{thm:T-SC-W1pComp} and Theorem \ref{thm:T-SC-W1pComp2} directly into the following compactness results for locally periodic domains.
\begin{thm}
Let $p\in (1,\infty)$. Let $u_\e$ be a bounded sequence in $L^p(\Oe)$. Then, there exist $u_0 \in L^p(\Omega;L^p(\Ypx))$ and a subsequence $\es$ such that $\widetilde{u_\es} \TscW{p} \widetilde{u_0}$.
\end{thm}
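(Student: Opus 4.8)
The plan is to reduce the statement to the $L^p$-compactness on the periodic reference domain $\hOe$ (the first assertion of Proposition~\ref{prop:PorousCompact}) by pulling the sequence back along the locally periodic transformations $\psi_\e$, and then to push the resulting two-scale limit forward again by means of Theorem~\ref{thm:TrafoLp}. In other words, the theorem is obtained by walking once around the left, bottom, and inverse-right edges of diagram~\eqref{eq:diagram} on the level of sequences.

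Concretely, I would argue as follows. Given the bounded sequence $u_\e$ in $L^p(\Oe)$, set $\hue \coloneqq u_\e \circ \psi_\e$, which is a well-defined measurable function on $\hOe$ since $\psi_\e \colon \overline{\hOe} \to \overline{\Oe}$ is a $C^1$-diffeomorphism. By Lemma~\ref{lem:TrafoBounded}\eqref{enum:LpBoundedTrafo}, the boundedness of $u_\e$ in $L^p(\Oe)$ is equivalent to the boundedness of $\hue$ in $L^p(\hOe)$, so $\hue$ is bounded there. The first part of Proposition~\ref{prop:PorousCompact} then yields a subsequence $\es$ and a limit $\hu_0 \in L^p(\Omega \times \Yp)$ with $\widetilde{\hu_\es} \TscW{p} \widetilde{\hu_0}$. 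Applying Theorem~\ref{thm:TrafoLp} to this subsequence transfers the two-scale convergence back to the untransformed setting: $\widetilde{u_\es} \TscW{p} \widetilde{u_0}$ with $u_0 = \hu_{0,\psi_0^{-1}}$. Finally, $u_0 \in L^p(\Omega; L^p(\Ypx))$ holds by the very definition of this space as $\{ f(\cdot_x, \psi_0^{-1}(\cdot_x, \cdot_y)) \mid f \in L^p(\Omega; L^p(\Yp)) \}$, which completes the argument.

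I do not expect a genuine obstacle here: the theorem is essentially a corollary of the transformation dictionary established in Theorem~\ref{thm:TrafoLp} together with the periodic compactness of Proposition~\ref{prop:PorousCompact}. The one point that deserves a line of care is that $\Oe$ need not lie inside $\Omega$, so the extension $\widetilde{\cdot}$ and the two-scale convergence are taken over the slightly enlarged domain $\Od$; this is, however, exactly the framework in which Theorem~\ref{thm:TrafoLp} is stated, so no extra work is required. A direct proof avoiding the transformation --- extend $\widetilde{u_\e}$ by zero to $\Od$, extract a two-scale limit via Theorem~\ref{thm:T-SC-LpComp}, and then show that its support lies in $Q$ --- is possible in principle, but the support identification is precisely what the transformation route supplies for free (and it additionally gives, via the remark after Theorem~\ref{thm:TrafoLp}, that the limit space is independent of the chosen reference domain).
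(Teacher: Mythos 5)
Your argument is correct and is exactly the route the paper intends: it states that these compactness results follow \emph{directly} from the transformation theorems together with the periodic compactness, which is precisely your chain Lemma~\ref{lem:TrafoBounded}\eqref{enum:LpBoundedTrafo} $\to$ Proposition~\ref{prop:PorousCompact} $\to$ Theorem~\ref{thm:TrafoLp}. No gaps; your remark about working over $\Od$ is the only subtlety and you handle it as the paper does.
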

\begin{thm}
Let $p \in (1,\infty)$. Let $u_\e$ be a sequence in $W^{1,p}(\Oe)$ such that $\norm{u_\e}{L^p(\Oe)} \leq C$ and $\e\norm{\nabla u_\e}{L^p(\Oe)} \leq C$. Then, there exist $u_0 \in L^p(\Omega;W^{1,p}_\#(\Ypx))$ and a subsequence $\es$ such that $\widetilde{u_\es} \TscW{p} \widetilde{u_0}$ and $\es\widetilde{\nabla u_\es} \TscW{p} \widetilde{\nabla_y u_0}$.
\end{thm}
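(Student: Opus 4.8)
The plan is to reduce the statement to the periodic two-scale compactness result Proposition \ref{prop:PorousCompact}\eqref{item:PorousCompact2} by pulling the sequence $u_\e$ back along $\psi_\e$ onto the periodic reference domain $\hOe$, and then to transport the resulting two-scale limits back onto $\Oe$ by means of the transformation theorems Theorem \ref{thm:TrafoLp} and Theorem \ref{thm:TrafoWp2}. So essentially no new analysis is required; the statement is a composition of results already established.

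First I would set $\hue \coloneqq u_\e \circ \psi_\e$. Since $\psi_\e : \overline{\hOe} \to \overline{\Oe}$ is a $C^1$-diffeomorphism with uniformly bounded Jacobians (Definition \ref{def:Psi} and Lemma \ref{lemma:TwoScalePsi}), the chain rule gives $\hue \in W^{1,p}(\hOe)$ with $\nabla \hue(x) = \Psi_\e^\top(x)\,(\nabla u_\e)(\psi_\e(x))$. Applying Lemma \ref{lem:TrafoBounded}\eqref{enum:LpBoundedTrafo} and Lemma \ref{lem:TrafoBounded}\eqref{enum:GradientLpBoundedTrafo} with $l=2$ to the hypotheses $\norm{u_\e}{L^p(\Oe)}\leq C$ and $\e\norm{\nabla u_\e}{L^p(\Oe)}\leq C$ then yields $\norm{\hue}{L^p(\hOe)} + \e\norm{\nabla \hue}{L^p(\hOe)} \leq C$. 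Hence Proposition \ref{prop:PorousCompact}\eqref{item:PorousCompact2} provides a single subsequence $\es$ and a function $\hu_0 \in L^p(\Omega;W^{1,p}_\#(\Yp))$ with $\widetilde{\hu_\es} \TscW{p} \widetilde{\hu_0}$ and $\es\widetilde{\nabla \hu_\es} \TscW{p}\widetilde{\nabla_y \hu_0}$.

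It remains to transform back along this same subsequence. Since $u_\es$ is bounded in $L^p(\Oe)$, Theorem \ref{thm:TrafoLp} turns $\widetilde{\hu_\es}\TscW{p}\widetilde{\hu_0}$ into $\widetilde{u_\es}\TscW{p}\widetilde{u_0}$ with $u_0 = \hu_{0,\psi_0^{-1}}$; because $\hu_0 \in L^p(\Omega;W^{1,p}_\#(\Yp))$, the definition of the non-cylindrical space gives $u_0 = \hu_{0,\psi_0^{-1}} \in L^p(\Omega;W^{1,p}_\#(\Ypx))$. Likewise, Theorem \ref{thm:TrafoWp2} turns $\es\widetilde{\nabla \hu_\es}\TscW{p}\widetilde{\nabla_y \hu_0}$ into $\es\widetilde{\nabla u_\es}\TscW{p}\widetilde{\nabla_y u_0}$, and the compatibility relation $\hu_0 = u_{0,\psi_0}$ supplied there shows that the limit produced here is the same $u_0$ produced by Theorem \ref{thm:TrafoLp}. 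This establishes both convergences along $\es$.

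There is no genuine obstacle here; the only things to watch are bookkeeping points: that the two periodic compactness statements are extracted along one common subsequence (Proposition \ref{prop:PorousCompact}\eqref{item:PorousCompact2} does this in a single step), that the transformation theorems leave this subsequence untouched, and that the two limits denoted $u_0$ actually coincide, which they do since both are characterised by $\hu_0 = u_{0,\psi_0}$. All the substantial work -- the failure of extension-by-zero to preserve $W^{1,p}$ and the exploitation of the periodic cell geometry -- is already encapsulated in Proposition \ref{prop:PorousCompact} and in Theorem \ref{thm:TrafoLp} and Theorem \ref{thm:TrafoWp2}.
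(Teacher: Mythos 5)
Your proposal is correct and follows exactly the route the paper intends: the paper states these locally periodic compactness theorems as immediate consequences of the transformation results applied to the periodic compactness statements, and you have simply written out that composition (boundedness transfer via Lemma \ref{lem:TrafoBounded}, extraction via Proposition \ref{prop:PorousCompact}, back-transformation via Theorems \ref{thm:TrafoLp} and \ref{thm:TrafoWp2}, with the limits matched through $\hu_0 = u_{0,\psi_0}$). No gaps.
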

\begin{thm}
Let $p \in (1,\infty)$ and assume that $Y^*_\#$ is connected. Let $u_\e$ be a sequence in $W^{1,p}(\Oe)$ such that $\norm{u_\e}{L^p(\Oe)}+ \norm{\nabla u_\e}{L^p(\Oe)} \leq C$. Then, there exist $u_0 \in W^{1,p}(\Omega)$, $u_1 \in L^p(\Omega;W^{1,p}_\#(\Ypx)/\R)$ and a subsequence $\es$ such that $\widetilde{u_\es} \TscW{p} \chi_Q\widetilde{u_0}$ and $\widetilde{\nabla u_\es} \TscW{p} \chi_Q \widetilde{\nabla_x u_0} +\widetilde{\nabla_y u_1}$.
\end{thm}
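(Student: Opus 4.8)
The plan is to transport the claim to the periodic reference domain, apply the perforated–domain compactness result Proposition~\ref{prop:PorousCompact}\eqref{item:PorousCompact}, and then undo the transformation with Theorem~\ref{thm:TrafoLp} (for the functions) and Theorem~\ref{thm:TrafoWp0} (for the gradients); this is exactly the scheme announced before the statement.

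First I would set $\hue \coloneqq u_\e \circ \psi_\e$, which is a sequence in $W^{1,p}(\hOe)$ since each $\psi_\e$ is a $C^1$-diffeomorphism. By Lemma~\ref{lem:TrafoBounded}\eqref{enum:LpBoundedTrafo} and \eqref{enum:GradientLpBoundedTrafo} (applied with $l=0$), the bound $\norm{u_\e}{L^p(\Oe)} + \norm{\nabla u_\e}{L^p(\Oe)} \leq C$ is equivalent to $\norm{\hue}{L^p(\hOe)} + \norm{\nabla \hue}{L^p(\hOe)} \leq C$. Because $Y^*_\#$ is connected, Proposition~\ref{prop:PorousCompact}\eqref{item:PorousCompact} then provides a \emph{single} subsequence $\es$, a function $\hu_0 \in L^p(\Omega)$ and $\hu_1 \in L^p(\Omega;W^{1,p}_\#(\Yp)/\R)$ with $\widetilde{\hu_\es} \TscW{p} \chi_{\Yp}\hu_0$ and $\widetilde{\nabla \hu_\es} \TscW{p}\chi_{\Yp}\nabla_x\hu_0 + \widetilde{\nabla_y\hu_1}$ along $\es$. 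Integrating the gradient limit over $Y$ (the $\nabla_y\hu_1$–contribution has vanishing $y$–mean) shows $\nabla_x \hu_0 \in L^p(\Omega)^N$, hence $\hu_0 \in W^{1,p}(\Omega)$.

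It then remains to transform back along this same subsequence. Since $\hu_0$ is independent of $y$, it can be regarded as an element of $L^p(\Omega\times\Yp)$ that is constant in $y$, so $\chi_\Yp\hu_0 = \widetilde{\hu_0}$ and $\hu_{0,\psi_0^{-1}} = \hu_0$; Theorem~\ref{thm:TrafoLp} therefore yields $\widetilde{u_\es}\TscW{p}\chi_Q\widetilde{u_0}$ with $u_0 = \hu_{0,\psi_0^{-1}} = \hu_0 \in W^{1,p}(\Omega)$. For the gradient, Theorem~\ref{thm:TrafoWp0} applies — its hypotheses ($u_\e \in W^{1,p}(\Oe)$ bounded in $L^p(\Oe)$, $\hue = u_\e\circ\psi_\e$, $Y^*_\#$ connected) all hold — and turns $\widetilde{\nabla\hu_\es}\TscW{p}\chi_\Yp\nabla_x\hu_0 + \widetilde{\nabla_y\hu_1}$ into $\widetilde{\nabla u_\es}\TscW{p}\chi_Q\widetilde{\nabla_x u_0} + \widetilde{\nabla_y u_1}$ with $u_0 = \hu_0$ and $u_1 = \hu_{1,\psi_0^{-1}} + \check{\psi}_0^{-1}\cdot\nabla_x\hu_0 \in L^p(\Omega;W^{1,p}_\#(\Ypx)/\R)$, the last membership being immediate from the definition of that space together with the uniform bounds on $\Psi_0$, $\Psi_0^{-1}$ and $\check{\psi}_0^{-1}$ from Lemma~\ref{lemma:TwoScalePsi}. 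Given Theorems~\ref{thm:TrafoLp} and \ref{thm:TrafoWp0} and Proposition~\ref{prop:PorousCompact}, no genuinely hard step is left; the only points needing care are the bookkeeping ones — selecting one common subsequence so that both transformation theorems act along the same $\es$, and observing that $y$–independence of $\hu_0$ survives composition with $\psi_0^{-1}(x,\cdot)$, which is precisely what makes the function limit of the asserted form $\chi_Q u_0$ with $u_0 \in W^{1,p}(\Omega)$.
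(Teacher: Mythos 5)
Your proof is correct and follows exactly the route the paper intends: the paper states this theorem without a written proof, asserting only that it follows directly by combining Lemma~\ref{lem:TrafoBounded}, Proposition~\ref{prop:PorousCompact} and the transformation results Theorem~\ref{thm:TrafoLp} and Theorem~\ref{thm:TrafoWp0}, which is precisely your argument. Your added bookkeeping (one common subsequence, $\hu_0\in W^{1,p}(\Omega)$, $y$-independence surviving composition with $\psi_0^{-1}$) fills in the details the paper leaves implicit.
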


Now, we consider the transformational behaviour of the strong two-scale convergence. This allows us to translate the strong two-scale convergence of the coefficients into the strong two-scale convergence of the transformed coefficients.
Moreover, the following result can also be used to derive further two-scale compactness results for locally periodic domains from the corresponding compactness results in the periodic domain.
Since we have only assumed the strong two-scale convergence for the locally periodic transformations $\psi_\e$ and not a $L^\infty$-convergence, we cannot expect that  the transformation  transfers the strong two-scale convergence in $L^p$ to the strong two-scale convergence in the same $L^p$-spaces. 
However, we obtain the strong two-scale convergence in $L^q(\Omega)$ for every $q \in (1,p)$.
\begin{thm}\label{thm:TrafoStrongTsc}
Let $p \in (1,\infty)$. Let $u_\e$ be a sequence in $L^p(\Oe)$ and $u_0\in L^p(\Omega;L^p(\Ypx))$ such that $\widetilde{u_\e} \TscW{p} \widetilde{u_0}$. Let $\hu_\e = u_\e \circ \psi_\e$ be a sequence in $L^p(\hOe)$ such that $\widetilde{\hue} \TscW{p} \widetilde{\hu_0}$ for $\hu_0\in L^p(\Omega\times \Yp)$ with $\hu_0 = u_{0,\psi_0}$. Then, the following statements hold
\begin{enumerate}
\item If $\widetilde{u_\e} \TscS{p} \widetilde{u_0}$, then $\widetilde{\hu_\e} \Tscs{p} \widetilde{\hu_0}$,
\item If $\widetilde{\hu_\e} \TscS{p} \widetilde{\hu_0}$, then $\widetilde{u_\e} \Tscs{p} \widetilde{u_0}$.
\end{enumerate}
\end{thm}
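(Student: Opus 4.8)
The plan is to deduce both statements from the \emph{definition} of strong two-scale convergence, i.e.\ to reduce them to a convergence of $L^q$-norms. The weak two-scale convergences of $\widetilde{u_\e}$ and of $\widetilde{\hu_\e}$ are already part of the hypotheses; since $\Od$ is bounded, $L^{q'}(\Od;C_\#(Y))\subset L^{p'}(\Od;C_\#(Y))$ for $q\in(1,p)$, so these weak two-scale convergences also hold in $L^q$. Hence statement (1) reduces to proving $\norm{\hu_\e}{L^q(\hOe)}\to\norm{\widetilde{\hu_0}}{L^q(\Omega\times Y)}$ for every $q\in(1,p)$, and statement (2) to the analogous claim for $\norm{u_\e}{L^q(\Oe)}$. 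Because the two statements are symmetric under interchanging $(u_\e,\psi_\e,\psi_0)$ with $(\hu_\e,\psi_\e^{-1},\psi_0^{-1})$, I would carry out (1) in detail and indicate the (minor) changes for (2) at the end.

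Fix $q\in(1,p)$. Transforming the integral by $\psi_\e$ (using $\det\Psi_\e=J_\e>0$) gives $\norm{\hu_\e}{L^q(\hOe)}^q=\intOe|u_\e(x)|^q\,g_\e(x)\,dx$ with $g_\e\coloneqq J_\e^{-1}\circ\psi_\e^{-1}$, which by Lemma \ref{lemma:TwoScalePsi} satisfies $\norm{g_\e}{L^\infty(\Oe)}\le 1/c_J$. The core of the proof is passing to the limit in $\intOe|u_\e|^q g_\e$, for which I would use two ingredients. First, $g_\e\circ\psi_\e=J_\e^{-1}$ and $\widetilde{J_\e^{-1}}\Tscs{\infty}\widetilde{J_0^{-1}}$ (Lemma \ref{lemma:TwoScalePsi}), so Theorem \ref{thm:TrafoLp} applied to $g_\e$ yields $\widetilde{g_\e}\TscW{s}\widetilde{g_0}$ for every $s\in(1,\infty)$, where $g_0\in L^\infty(Q)$ is given by $g_0(x,y)\coloneqq J_0(x,\psi_0^{-1}(x,y))^{-1}$. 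Second, $\widetilde{u_\e}\TscS{p}\widetilde{u_0}$ means, by Theorem \ref{thm:T-SC-Equi-Unf}, that $\T_\e(\widetilde{u_\e})\to\widetilde{u_0}$ in $L^p(\Od\times Y)$; since $f\mapsto|f|^q$ maps $L^p$ continuously to $L^{p/q}$ (here $q<p$) and $\T_\e(\widetilde{|u_\e|^q})=|\T_\e(\widetilde{u_\e})|^q$, this gives $\widetilde{|u_\e|^q}\TscS{p/q}\widetilde{|u_0|^q}$.

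Now choose $s>\tfrac{p}{p-q}$, so that $\tfrac1r\coloneqq\tfrac qp+\tfrac1s\in(0,1)$ and $r\in(1,\infty)$. By Theorem \ref{thm:T-SC-Prod} (strong times weak) one gets $\widetilde{|u_\e|^q g_\e}=\widetilde{|u_\e|^q}\,\widetilde{g_\e}\TscW{r}\widetilde{|u_0|^q g_0}$, and testing this weak two-scale convergence against the constant function $1\in L^{r'}(\Od;C_\#(Y))$ (Definition \ref{def:TwoScale}) yields $\intOe|u_\e|^q g_\e\to\int_{\Od\times Y}\widetilde{|u_0|^q g_0}$. It remains to identify the limit: substituting $z=\psi_0(x,\cdot_y)$ on $\Yp$ and using $\hu_0=u_{0,\psi_0}$ together with $g_0(x,z)=J_0(x,\psi_0^{-1}(x,z))^{-1}$, one obtains $\int_{\Od\times Y}\widetilde{|u_0|^q g_0}=\intOYp|u_0(x,\psi_0\xy)|^q\,dy\,dx=\norm{\hu_0}{L^q(\Omega\times\Yp)}^q=\norm{\widetilde{\hu_0}}{L^q(\Omega\times Y)}^q$. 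Hence $\norm{\hu_\e}{L^q(\hOe)}\to\norm{\widetilde{\hu_0}}{L^q(\Omega\times Y)}$, so by the reduction $\widetilde{\hu_\e}\TscS{q}\widetilde{\hu_0}$; as $q\in(1,p)$ was arbitrary, $\widetilde{\hu_\e}\Tscs{p}\widetilde{\hu_0}$. For (2) I would argue identically, starting from $\norm{u_\e}{L^q(\Oe)}^q=\intO\widetilde{J_\e}\,\widetilde{|\hu_\e|^q}\,dx$, using the \emph{strong} two-scale convergence $\widetilde{J_\e}\TscS{s}\widetilde{J_0}$ of Lemma \ref{lemma:TwoScalePsi} in place of $\widetilde{g_\e}\TscW{s}\widetilde{g_0}$, and identifying the limit (again via $z=\psi_0(x,\cdot_y)$ and $\hu_0=u_{0,\psi_0}$) as $\int_{\Omega\times Y}\widetilde{J_0|\hu_0|^q}=\int_Q|u_0|^q=\norm{\widetilde{u_0}}{L^q(\Od\times Y)}^q$.

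I expect the delicate point to be the limit passage in $\intOe|u_\e|^q g_\e$. Since $\psi_\e$ is only assumed to converge in the strong two-scale sense and not in $L^\infty$, the Jacobian factor $g_\e=J_\e^{-1}\circ\psi_\e^{-1}$ cannot be expected to converge \emph{strongly} two-scale, so Theorem \ref{thm:T-SC-Prod} is available only in its ``strong times weak $\Rightarrow$ weak'' form; the observation that makes this enough is that the norm convergence only requires the \emph{integral} of $\widetilde{|u_\e|^q g_\e}$ to converge -- equivalently, only weak two-scale convergence of this product tested against $1$ -- and choosing the Hölder exponents with a little room to keep everything in some $L^r$ with $r>1$ is precisely what forces the restriction $q<p$ present in the statement. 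The remaining steps are routine changes of variables controlled by the uniform Jacobian bounds of Lemma \ref{lemma:TwoScalePsi}.
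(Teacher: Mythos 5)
Your proposal is correct and follows essentially the same route as the paper's proof: reduce to convergence of the $L^q$-norms for $q\in(1,p)$, transform the integral by $\psi_\e$ to produce the factor $J_\e^{-1}\circ\psi_\e^{-1}$, obtain its weak two-scale convergence via Theorem \ref{thm:TrafoLp}, upgrade $|u_\e|^q$ to a strongly two-scale convergent sequence through the unfolding operator, pass to the limit in the product, and transform back with $\psi_0$. The only difference is that you make the Hölder exponent bookkeeping explicit where the paper just says ``for $r>1$ small enough''; the substance is identical.
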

\begin{proof}
Assume that $\widetilde{u_\e} \TscS{p} \widetilde{u_0}$. Then, it is sufficient to show that $\lim\limits_{\e \to 0}\norm{\widetilde{\hu_\e}}{L^q(\Omega)}=\norm{\widetilde{\hu_0}}{L^q(\Omega \times Y)}$ for every $q \in (1,p)$.
We transform via $\psi_\e$ and get
\begin{align*}
\lim\limits_{\e \to 0}\norm{\widetilde{\hu_\e}}{L^q(\Omega)}^q
=
\lim\limits_{\e \to 0} \intO |\widetilde{\hu_\e}(x)|^q dx
=
\lim\limits_{\e \to 0} \int\limits_\Od \widetilde{J_\e^{-1}\circ\psi_\e^{-1}}(x) |\widetilde{u_\e}(x)|^q dx.
\end{align*}
In order to pass to the limit $\e \to0$, we note that $\widetilde{J_\e^{-1}} \Tscs{\infty}\widetilde{J_0^{-1}}$. Then, Theorem \ref{thm:TrafoLp} implies that $\widetilde{J_\e^{-1}\circ\psi_\e^{-1}} \Tscw{\infty} \widetilde{J^{-1}_{0,\psi_0^{-1}}}$.
Next, we rewrite the strong two-scale convergence of $\widetilde{u_\e}$ into the strong convergence of $\T_\e(\widetilde{u_\e})$ in $L^p(\Od \times Y)$. There, we can deduce the strong convergence of $\T_\e(|u_\e|^{q}) = |\T_\e(\widetilde{u_\e})|^{q} $ to $|\widetilde{u_0}|^{q}$ in $L^{r}(\Od \times Y)$ for $r>1$  small enough.
Then, we can pass to the limit and get
\begin{align}
\lim\limits_{\e \to 0} \intO |\widetilde{\hue}(x)|^q dx
=
\lim\limits_{\e \to 0} \int\limits_\Od \widetilde{J_\e^{-1}\circ\psi_\e^{-1}}(x) |\widetilde{u_\e}(x)|^q dx
=
\int\limits_{\Od} \intY \widetilde{J^{-1}_{0,\psi_0^{-1}}}\xy |\widetilde{u_0}\xy|^q dy dx.
\end{align}
Transforming the right-hand side back yields the desired result.

The proof of the other implication follows in the same way.
\end{proof}

\section{Homogenisation on locally periodic domains}\label{sec:HomogenisationProblem}
In order to pass to the limit $\e \to 0$ in \eqref{eq:weakE}, we have to assume that there exists $A_0 \in L^\infty(Q)^{N\times N}$, which is coercive, such that $\widetilde{A_\e} \Tscs{\infty} \widetilde{A_0}$ and that there exists $f_0 \in L^2(\Omega;L^2(\Ypx))$ such that $\widetilde{f_\e} \TscW{2} \widetilde{f_0}$.
Note that it is not necessary to assume that these two-scale limits are $0$ outside of $Q$. It is sufficient to assume only the existence of the two-scale limits $A_0$ and $f_0$. Then, the two-scale compactness results for locally periodic domains ensures that $A_0 = \chi_Q A_0$ and $f_0 = \chi_Q f_0$.
In the following, we assume that $Y^*_\#$ is connected if $l= 0$.

\subsection{The periodic substitute problem}\label{subsec:TrafoProblem}
We transform the coefficients $A_\e$ and the source functions $f_\e$ into $\hat{A}_\e\coloneqq A_\e \circ \psi_\e$ and $\hat{f}_\e \coloneqq f_\e\circ \psi_\e$, respectively.
Lemma \ref{thm:TrafoLp} implies that $\widetilde{\hat{f}_\e} \TscW{2}\widetilde{\hat{f}_0}$ with $\hat{f}_0 \coloneqq f_{0, \psi_0}$ and Lemma \ref{thm:TrafoStrongTsc} implies that $\widetilde{\hat{A}_\e}  \Tscs{\infty}\widetilde{\hat{A}_0}$ with $\hat{A}_0\coloneqq A_{0, \psi_0}$.
Moreover, note that these transformations carry the uniform boundedness and coercivity from $A_\e$ over to $\hat{A}_\e$ as well as from $A_0$ over to $\hat{A}_0$.
Then, the transformation of \eqref{eq:weakE} with $\psi_\e$ gives the following weak form (cf.~Proposition~\ref{prop:E-WeakForm-Equi}):

Find $\hue \in H^1(\hat{\Omega}_\e)$ such that 
\begin{align}\label{eq:weakEHat}
\inthOe \e^l J_\e(x) \Psi_\e^{-1}(x) \hat{A}_\e(x)\Psi_\e^{-\top}(x) \nabla \hue(x) \cdot \nabla \hp(x) + J_\e(x)\hue(x) \hp(x) dx = \inthOe J_\e(x) \hat{f}_\e(x) \hp(x) dx
\end{align}
for every $\hp \in H^1(\hOe)$.

Using the uniform estimates of the transformations, we show the existence and uniqueness of solutions of \eqref{eq:weakEHat} as well as their uniform boundedness.
\begin{prop}\label{prop:ExiHu}
For every $\e>0$, there exists a unique solution $\hue \in H^1(\hOe)$ of the weak form \eqref{eq:weakEHat} such that
\begin{align}\label{eq:UniformEstHu}
\norm{\hat{u}}{L^2(\hOe)} + \e^{l/2}\norm{\hat{u}}{L^2(\hOe)} \leq C.
\end{align}
\end{prop}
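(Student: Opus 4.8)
The plan is to apply the Lax--Milgram lemma to the weak form \eqref{eq:weakEHat} for each fixed $\e>0$ and then extract the uniform bound \eqref{eq:UniformEstHu} by testing the equation with its own solution. Denote by $a_\e(\cdot,\cdot)$ the bilinear form on the left-hand side of \eqref{eq:weakEHat} and by $F_\e(\hp)\coloneqq\inthOe J_\e(x)\hat{f}_\e(x)\hp(x)\,dx$ its right-hand side, both viewed on $H^1(\hOe)$. Boundedness of $a_\e$ is immediate: by Lemma \ref{lemma:TwoScalePsi} we have $\norm{J_\e}{C(\overline{\hOe})}+\norm{\Psi_\e^{-1}}{C(\overline{\hOe})}\leq C$, and $\norm{\hat{A}_\e}{L^\infty(\hOe)}=\norm{A_\e\circ\psi_\e}{L^\infty(\hOe)}\leq\norm{A_\e}{L^\infty(\Oe)}\leq C$, so that $|a_\e(\hue,\hp)|\leq C(1+\e^l)\norm{\hue}{H^1(\hOe)}\norm{\hp}{H^1(\hOe)}$. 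Moreover $\hat{f}_\e=f_\e\circ\psi_\e\in L^2(\hOe)$ --- the change of variables has Jacobian bounded above and below, as in the computation in Lemma \ref{lem:TrafoBounded}\eqref{enum:LpBoundedTrafo} --- hence $F_\e$ is a bounded linear functional with $|F_\e(\hp)|\leq C\norm{\hat{f}_\e}{L^2(\hOe)}\norm{\hp}{L^2(\hOe)}$.

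For coercivity, I would first observe that the transformed diffusion matrix $\Psi_\e^{-1}\hat{A}_\e\Psi_\e^{-\top}$ is still uniformly coercive: for a.e.~$x\in\hOe$ and every $\xi\in\R^N$,
\[
\xi^\top\Psi_\e^{-1}(x)\hat{A}_\e(x)\Psi_\e^{-\top}(x)\xi=\big(\Psi_\e^{-\top}(x)\xi\big)^\top\hat{A}_\e(x)\big(\Psi_\e^{-\top}(x)\xi\big)\geq\alpha\,\big|\Psi_\e^{-\top}(x)\xi\big|^2\geq\tfrac{\alpha}{C^2}\,|\xi|^2,
\]
where the last inequality uses $|\xi|=|\Psi_\e^\top(x)\Psi_\e^{-\top}(x)\xi|\leq\norm{\Psi_\e}{C(\overline{\hOe})}\,|\Psi_\e^{-\top}(x)\xi|$ together with $\norm{\Psi_\e}{C(\overline{\hOe})}\leq C$ from Lemma \ref{lemma:TwoScalePsi}. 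Combining this with $J_\e\geq c_J$ gives
\[
a_\e(\hp,\hp)\geq\tfrac{\alpha c_J}{C^2}\,\e^l\norm{\nabla\hp}{L^2(\hOe)}^2+c_J\norm{\hp}{L^2(\hOe)}^2\geq c_J\min\{1,\alpha C^{-2}\e^l\}\,\norm{\hp}{H^1(\hOe)}^2,
\]
so for each fixed $\e>0$ the form $a_\e$ is coercive on $H^1(\hOe)$ and Lax--Milgram produces a unique solution $\hue\in H^1(\hOe)$ of \eqref{eq:weakEHat}. (Alternatively, one may use the equivalence of \eqref{eq:weakEHat} with \eqref{eq:weakE} from Proposition \ref{prop:E-WeakForm-Equi}, apply Lax--Milgram directly to the elliptic problem \eqref{eq:weakE}, and transfer existence and uniqueness back via Lemma \ref{lem:TrafoBounded}.)

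The point for the uniform estimate is that the constants in the second display depend only on $\alpha$, $c_J$ and the $\e$-independent constant $C$ of Lemma \ref{lemma:TwoScalePsi}. Testing \eqref{eq:weakEHat} with $\hp=\hue$, using this lower bound on the left and Cauchy--Schwarz on the right, yields
\[
\tfrac{\alpha c_J}{C^2}\,\e^l\norm{\nabla\hue}{L^2(\hOe)}^2+c_J\norm{\hue}{L^2(\hOe)}^2\leq\norm{J_\e}{L^\infty(\hOe)}\norm{\hat{f}_\e}{L^2(\hOe)}\norm{\hue}{L^2(\hOe)}.
\]
Here $\hat{f}_\e$ is bounded in $L^2(\hOe)$ uniformly in $\e$: the assumption $\widetilde{f_\e}\TscW{2}\widetilde{f_0}$ forces $\norm{f_\e}{L^2(\Oe)}=\norm{\widetilde{f_\e}}{L^2(\Od)}\leq C$ by Proposition \ref{prop:T-SC-Bound}, and then Lemma \ref{lem:TrafoBounded}\eqref{enum:LpBoundedTrafo} gives $\norm{\hat{f}_\e}{L^2(\hOe)}\leq C$. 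Absorbing $\norm{\hue}{L^2(\hOe)}$ into the left with Young's inequality produces $\norm{\hue}{L^2(\hOe)}^2+\e^l\norm{\nabla\hue}{L^2(\hOe)}^2\leq C$, which is the asserted bound \eqref{eq:UniformEstHu} (with $\nabla\hue$ in the second term). There is no genuine difficulty in the argument; the one thing to get right is that the $\e$-uniform $L^\infty$-bounds on $\Psi_\e$, $\Psi_\e^{-1}$ and $J_\e$ from Lemma \ref{lemma:TwoScalePsi}, together with the $\e$-uniform ellipticity of $A_\e$, make the transformed bilinear form $\e$-uniformly coercive apart from the explicit factor $\e^l$ in front of the gradient --- and it is precisely this that delivers a uniform a priori estimate in spite of the $\e$-oscillating coefficients.
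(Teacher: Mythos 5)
Your proposal is correct and follows essentially the same route as the paper: uniform coercivity of the transformed form $J_\e\Psi_\e^{-1}\hat{A}_\e\Psi_\e^{-\top}$ deduced from the $\e$-independent bounds on $\Psi_\e$, $\Psi_\e^{-1}$, $J_\e$ of Lemma \ref{lemma:TwoScalePsi} together with the coercivity of $A_\e$, then Lax--Milgram, then testing with $\hue$ and Young's inequality, with the uniform $L^2$-bound on $\hat{f}_\e$ obtained from its two-scale convergence. The only cosmetic difference is that you verify the ellipticity pointwise on vectors $\xi$ while the paper phrases it through norms of $\sqrt{J_\e}\Psi_\e^{-\top}\nabla u$; you also correctly read the second term of \eqref{eq:UniformEstHu} as a gradient bound.
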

\begin{proof}
Using the uniform bounds of the Jacobians of $\psi_\e$, we can estimate
\begin{align*}
\norm{\nabla u}{L^2(\hOe)}^2
\leq\frac{1}{c_J}
\norm{\sqrt{J_\e}\Psi_\e^{\top}\Psi_\e^{-\top}\nabla u}{L^2(\hOe)}^2
\leq
\frac{1}{c_J}\norm{\Psi_\e^{\top}}{C(\hOe)}^2\norm{\sqrt{J_\e}\Psi_\e^{-\top}\nabla u}{L^2(\hOe)}^2 
\\
\leq C \norm{\sqrt{J_\e}\Psi_\e^{-\top}\nabla u}{L^2(\hOe)}^2
\leq 
\frac{C}{\alpha} \inthOe J_\e(x) \Psi_\e^{-1}(x) \hat{A}_\e(x) \Psi_\e^{-\top}(x)\nabla u(x) \cdot \nabla u(x) dx
\end{align*}
for every $u \in H^1(\hOe)$. This implies the $\e$-independent coercivity of the left-hand side of \eqref{eq:weakEHat} in $H^1(\hOe)$
\begin{align}\label{eq:CoerHu}
\inthOe \e^l J_\e(x) \Psi_\e^{-1}(x) \hat{A}_\e(x)\Psi_\e^{-\top}(x)\nabla u(x) \cdot \nabla u(x) + J_\e(x) u(x) u(x) dx \geq C \left(\e^l\norm{\nabla u}{L^2(\hOe)}^2 + \norm{u}{L^2(\hOe)}^2 \right).
\end{align}
Furthermore, the left-hand side of \eqref{eq:weakEHat} can be estimated for every $u,v \in H^1(\hOe)$ with the Cauchy inequality and the uniform estimates of the transformations
\begin{align*}
\inthOe& \e^l J_\e(x) \Psi_\e^{-1}(x) \hat{A}_\e(x)\Psi_\e^{-\top}(x)\nabla v(x) \cdot \nabla u(x) + J_\e(x)v(x) u(x) dx
\\
&\leq 
\e^l C \norm{\sqrt{J_\e}\Psi_\e^{-\top}\nabla v}{L^2(\hOe)}\norm{\sqrt{J_\e}\Psi_\e^{-\top}\nabla u}{L^2(\hOe)}
+C \norm{v}{L^2(\hOe)} \norm{u}{L^2(\hOe)}
\\
&\leq \e^lC\norm{\nabla v}{L^2(\hOe)}\norm{\nabla u}{L^2(\hOe)} + C\norm{v}{L^2(\hOe)} \norm{u}{L^2(\hOe)}.
\end{align*}
This implies the continuity of the left-hand side.

The right-hand side of \eqref{eq:weakEHat} can be estimated with the uniform estimates from Lemma \ref{lemma:TwoScalePsi} 
\begin{align}\label{eq:ContHf}
\inthOe& J_\e(x) \hat{f}_\e(x) \hp(x) dx
\leq
\norm{J_\e \hat{f}_\e}{L^2(\hOe)} \norm{\hp}{L^2(\hOe)} 
\leq
\norm{J_\e}{C(\hOe)} \norm{\hat{f}_\e}{L^2(\hOe)} \norm{\hp}{L^2(\hOe)}
\leq C \norm{\hp}{L^2(\hOe)}.
\end{align}
Note that $\norm{\hat{f}_\e}{L^2(\hOe)}$ is  bounded since $\widetilde{\hat{f}_\e}$ two-scale converges in $L^2(\Omega)$.

These estimates allow us to apply the Theorem of Lax--Milgram, which gives the existence and uniqueness of a solution $\hue \in H^1(\hOe)$.
Combining \eqref{eq:weakEHat} with \eqref{eq:CoerHu} and \eqref{eq:ContHf} for $\hp = \hue$ and employing the Young inequality yield the uniform estimate \eqref{eq:UniformEstHu}.
\end{proof}

\begin{prop}\label{prop:E-WeakForm-Equi}
Let $u_\e \in H^1(\Oe)$ be the solution of \eqref{eq:weakE}
and let $\hue \in H^1(\hOe)$ be the solution of \eqref{eq:weakEHat}.
Then, $\hue = u_\e \circ \psi_\e$.
\end{prop}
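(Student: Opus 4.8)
The plan is to argue by uniqueness. By Proposition~\ref{prop:ExiHu} the transformed weak form \eqref{eq:weakEHat} has a unique solution in $H^1(\hOe)$, so it suffices to verify that $w_\e \coloneqq u_\e \circ \psi_\e$ belongs to $H^1(\hOe)$ and satisfies \eqref{eq:weakEHat}. Membership $w_\e \in H^1(\hOe)$ is just the chain rule for a Sobolev function composed with the $C^1$-diffeomorphism $\psi_\e$, together with the uniform bounds on $\Psi_\e$, $\Psi_\e^{-1}$ and $J_\e$ collected in Definition~\ref{def:Psi} and Lemma~\ref{lemma:TwoScalePsi} -- this is exactly the computation already carried out in the proof of Lemma~\ref{lem:TrafoBounded} -- and it yields $(\nabla u_\e)\circ\psi_\e = \Psi_\e^{-\top}\nabla w_\e$ a.e.\ on $\hOe$.

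Next I would record that, since $\psi_\e \colon \overline{\hOe}\to\overline{\Oe}$ is a $C^1$-diffeomorphism with the above bounds (and so is $\psi_\e^{-1}$), the pullback $\hp\mapsto\hp\circ\psi_\e^{-1}$ is a linear bijection of $H^1(\hOe)$ onto $H^1(\Oe)$. Hence \eqref{eq:weakE} holds for every $\varphi\in H^1(\Oe)$ if and only if, writing $\varphi=\hp\circ\psi_\e^{-1}$, the same identity holds for every $\hp\in H^1(\hOe)$. Fixing such a $\hp$ and changing variables $x=\psi_\e(\hat{x})$ in \eqref{eq:weakE} -- so that $dx=J_\e(\hat{x})\,d\hat{x}$, $u_\e(x)=w_\e(\hat{x})$, $\varphi(x)=\hp(\hat{x})$, $A_\e(x)=\hat{A}_\e(\hat{x})$, $f_\e(x)=\hat{f}_\e(\hat{x})$, and by the chain rule $\nabla u_\e(x)=\Psi_\e^{-\top}(\hat{x})\nabla w_\e(\hat{x})$, $\nabla\varphi(x)=\Psi_\e^{-\top}(\hat{x})\nabla\hp(\hat{x})$ -- and using the elementary matrix identity $A(\Psi^{-\top}p)\cdot(\Psi^{-\top}q)=(\Psi^{-1}A\Psi^{-\top}p)\cdot q$, the equation turns into
\begin{align*}
\inthOe \e^l J_\e(x) \Psi_\e^{-1}(x)\hat{A}_\e(x) \Psi_\e^{-\top}(x)\nabla w_\e(x) \cdot \nabla\hp(x) + J_\e(x) w_\e(x) \hp(x)\, dx = \inthOe J_\e(x) \hat{f}_\e(x) \hp(x)\, dx
\end{align*}
for every $\hp\in H^1(\hOe)$, which is precisely \eqref{eq:weakEHat} with $\hue$ replaced by $w_\e$.

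Therefore $w_\e$ solves \eqref{eq:weakEHat}, and the uniqueness part of Proposition~\ref{prop:ExiHu} forces $\hue=w_\e=u_\e\circ\psi_\e$. I do not expect a genuine obstacle here: the only steps requiring a modicum of care are the validity of the chain rule for Sobolev functions under composition with a $C^1$-diffeomorphism (standard, and already implicitly used in Lemma~\ref{lem:TrafoBounded}) and the observation that $\hp\mapsto\hp\circ\psi_\e^{-1}$ is a bijection between the two test-function spaces, so that the transformation loses no admissible test function.
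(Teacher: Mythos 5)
Your proposal is correct and follows essentially the same route as the paper: test \eqref{eq:weakE} with $\hp\circ\psi_\e^{-1}$, change variables via $\psi_\e$ using the chain rule and the identity $\nabla u_\e(\psi_\e(x))=\Psi_\e^{-\top}(x)\nabla(u_\e\circ\psi_\e)(x)$ to arrive at \eqref{eq:weakEHat}, and conclude by uniqueness of its solution. The paper's version is merely terser; your added remarks on the bijectivity of the test-function pullback and the Sobolev chain rule fill in details the paper leaves implicit.
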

\begin{proof}
The Theorem of Lax--Milgram ensures the existence of a unique solution of \eqref{eq:weakE}.
 
Testing \eqref{eq:weakE} with $\hp \circ \psi^{-1}_\e$, gives
\begin{align}
\intOe A_\e(x) \nabla u_\e(x)\cdot \nabla \hat{\varphi}\circ \psi^{-1}_\e(x)+ u_\e(x) \hat{\varphi}\circ\psi^{-1}_\e(x) dx = \intOe f_\e(x) \hat{\varphi} \circ\psi^{-1}_\e(x) dx.
\end{align}
Transforming the integrals with $\psi_\e$ and using the product rule yield
\begin{align}
\inthOe J_\e(x) \Psi_\e^{-1}(x)\hat{A}_\e(x)\Psi_\e^{-\top}(x) \nabla u_\e\circ\psi_\e(x)\cdot \nabla \hat{\varphi}(x) + u_\e\circ\psi_\e(x) \hp(x) dx = \inthOe J_\e(x) f_\e(\psi_\e(x)) \hat{\varphi}(x) dx.
\end{align}
It follows by the uniqueness of the solution of \eqref{eq:weakEHat} that $\hat{u}_\e = u_\e \circ \psi_\e$.
\end{proof}

\subsection{Homogenisation of the periodic substitute problem} \label{sec:SubstitutedProblem-Homogenisation}
In the following, we pass to the homogenisation limit $\e \to 0$ in \eqref{eq:weakEHat} by using the compactness result for periodic domains (cf.~Proposition~\ref{prop:PorousCompact}).
\begin{prop}\label{prop:Two-scale-Limit-Hu0}
Let $l =0$ and let $Y^*_\#$ be connected. Let $\hue$ be the solutions of \eqref{eq:weakEHat} given by Proposition~\ref{prop:ExiHu}. 
Then, $\widetilde{\hu_\e} \TscW{2}\chi_{\Yp}\hu_0$ and $\widetilde{\nabla u_\e} \TscW{2}\chi_{\Yp} \nabla_x \hu_0 + \widetilde{\nabla_y \hu_1}$, where $(\hu_0 , \hu_1) \in H^1(\Omega) \times L^2(\Omega,H^1_\#(\Yp)/\R)$ is the unique solution of
\begin{align}\notag
\intOYp J_0\xy \Psi_0^{-1}\xy \hat{A}_0\xy\Psi_0^{-\top}\xy (\nabla_x \hu_0(x) + \nabla_y \hu_1\xy) \cdot (\nabla_x \hp_0(x) + \nabla_y \hp_1\xy) dy dx
\\\label{eq:weakForm-two-scale-limit-hat0}
+ \intOYp J_0\xy \hu_0(x) \hp_0(x) dy dx
= 
\intOYp J_0\xy \hat{f}_0\xy \hp_0(x) dy dx
\end{align}
for every $(\hp_0, \hp_1) \in H^1(\Omega) \times L^2(\Omega,H^1_\#(\Yp)/\R)$.
\end{prop}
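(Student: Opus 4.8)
The plan is to carry out the classical two-scale homogenisation procedure for perforated domains on the periodic substitute problem \eqref{eq:weakEHat} with $l=0$, using the compactness result of Proposition \ref{prop:PorousCompact}\eqref{item:PorousCompact} and the strong two-scale convergences of the transformation data collected in Lemma \ref{lemma:TwoScalePsi}. First I record the a priori bound: for $l=0$, Proposition \ref{prop:ExiHu} gives $\norm{\hue}{L^2(\hOe)}+\norm{\nabla\hue}{L^2(\hOe)}\le C$. Since $Y^*_\#$ is connected, Proposition \ref{prop:PorousCompact}\eqref{item:PorousCompact} (with $p=2$) yields, along a subsequence that I do not relabel, functions $\hu_0\in H^1(\Omega)$ and $\hu_1\in L^2(\Omega;H^1_\#(\Yp)/\R)$ with $\widetilde{\hue}\TscW{2}\chi_{\Yp}\hu_0$ and $\widetilde{\nabla\hue}\TscW{2}\chi_{\Yp}\nabla_x\hu_0+\widetilde{\nabla_y\hu_1}$; it remains to identify $(\hu_0,\hu_1)$ as the unique solution of \eqref{eq:weakForm-two-scale-limit-hat0}.

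To pass to the limit in \eqref{eq:weakEHat} I test with $\hp=\hp_0+\e\hp_1\xxeps$ restricted to $\hOe$, where $\hp_0\in C^\infty(\overline{\Omega})$ and $\hp_1\in D(\Omega;C^\infty_\#(Y))$, so that $\widetilde{\nabla\hp}=\chi_{\hOe}(\nabla\hp_0+\nabla_y\hp_1\xxeps+\e\nabla_x\hp_1\xxeps)$ with the last summand tending strongly to $0$. Combining $\chi_{\hOe}\Tscs{\infty}\chi_{\Yp}$ (proof of Lemma \ref{lemma:TwoScalePsi}) with the strong two-scale convergence of smooth oscillating functions (Lemma \ref{lem:TwoScaleTrafoTestFunction} with $\psi_\e=\mathrm{id}$), one gets $\widetilde{\nabla\hp}\Tscs{\infty}\chi_{\Yp}(\nabla_x\hp_0+\nabla_y\hp_1)$ and $\widetilde{\hp}\Tscs{\infty}\chi_{\Yp}\hp_0$, which, since these sequences are bounded in $L^\infty$, also hold strongly two-scale in $L^2$. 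Likewise, the matrix field $B_\e\coloneqq J_\e\Psi_\e^{-1}\hat{A}_\e\Psi_\e^{-\top}$ is a finite sum of products of $\widetilde{J_\e}$, $\widetilde{\Psi_\e^{-1}}$ and $\widetilde{\hat{A}_\e}$, which by Lemma \ref{lemma:TwoScalePsi} and the hypothesis $\widetilde{\hat{A}_\e}\Tscs{\infty}\widetilde{\hat{A}_0}$ all converge strongly two-scale in every $L^q(\Omega)$, $q<\infty$; hence Theorem \ref{thm:T-SC-Prod} gives $\widetilde{B_\e}\Tscs{\infty}\widetilde{B_0}$ with $B_0\coloneqq J_0\Psi_0^{-1}\hat{A}_0\Psi_0^{-\top}$, and by the $L^\infty$-bound also strongly two-scale in $L^2$. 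Rewriting the diffusion term of \eqref{eq:weakEHat} as $\int_\Omega\widetilde{\nabla\hue}\cdot\widetilde{B_\e^\top\nabla\hp}\,dx$ and all three terms through the unfolding operator (Theorems \ref{thm:NormEqualityUnfold} and \ref{thm:T-SC-Equi-Unf}), each becomes, after unfolding, an $L^2(\Omega\times Y)$ pairing of $\T_\e(\widetilde{\nabla\hue})$ (resp.\ $\T_\e(\widetilde{\hue})$, $\T_\e(\widetilde{\hat{f}_\e})$), which converges weakly, against $\T_\e(\widetilde{B_\e^\top\nabla\hp})$ (resp.\ $\T_\e(\widetilde{J_\e\hp})$), which converges strongly, so I may pass to the limit directly. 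Using $\widetilde{\hat{f}_\e}\TscW{2}\widetilde{\hat{f}_0}$ with $\hat{f}_0=f_{0,\psi_0}$, this gives \eqref{eq:weakForm-two-scale-limit-hat0} for the chosen test functions, and their density in $H^1(\Omega)\times L^2(\Omega;H^1_\#(\Yp)/\R)$ extends it to all test functions.

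Finally I check that \eqref{eq:weakForm-two-scale-limit-hat0} has a unique solution, which then forces the whole sequence, not merely the extracted subsequence, to converge. Continuity of the bilinear form follows from the bounds in Lemma \ref{lemma:TwoScalePsi}. Coercivity follows since $J_0\ge c_J$, $\hat{A}_0$ is coercive and $\Psi_0^{-\top}$ has a uniformly bounded inverse, so $J_0\Psi_0^{-1}\hat{A}_0\Psi_0^{-\top}\eta\cdot\eta\ge c|\eta|^2$ for a.e.\ $\xy$ and all $\eta\in\R^N$; together with the cell inequality $\int_{\Yp}|v+\nabla_y\varphi(y)|^2\,dy\ge c|v|^2$ for all $v\in\R^N$, $\varphi\in H^1_\#(\Yp)$ (which holds because $Y^*_\#$ is connected, and which in turn controls $\norm{\nabla_x\hu_0}{L^2(\Omega)}$ and $\norm{\nabla_y\hu_1}{L^2(\Omega\times\Yp)}$ by $\norm{\nabla_x\hu_0+\nabla_y\hu_1}{L^2(\Omega\times\Yp)}$) and with $\int_{\Yp}J_0\,dy\ge c_J|\Yp|$ controlling $\norm{\hu_0}{L^2(\Omega)}$, one obtains coercivity on $H^1(\Omega)\times L^2(\Omega;H^1_\#(\Yp)/\R)$, and Lax--Milgram applies. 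The part I expect to be most delicate is not conceptual but the bookkeeping of integrability exponents: the strong two-scale convergences of $J_\e$, $\Psi_\e^{\pm1}$ and $\hat{A}_\e$ hold only in $L^q(\Omega)$ for every finite $q$ rather than in $L^\infty$, so the product passages must be handled carefully (Theorem \ref{thm:T-SC-Prod} and the $L^\infty$-bounds), and one must verify that the transformation preserves the ellipticity lower bound, so that the limit cell problem associated with $B_0$ stays coercive.
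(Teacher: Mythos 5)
Your proposal is correct and follows essentially the same route as the paper: extract limits via the perforated-domain compactness result, test \eqref{eq:weakEHat} with $\hp_0 + \e\hp_1\xxeps$, pass to the limit using the strong two-scale convergence of $J_\e$, $\Psi_\e^{-1}$ and $\hat{A}_\e$, extend by density, and conclude by uniqueness (Lax--Milgram) that the whole sequence converges. The additional bookkeeping you supply — handling the products through the unfolding operator and the finite integrability exponents, and the cell inequality used for coercivity (valid since $Y^*_\#$ is connected) — fills in details the paper dismisses as standard, but does not constitute a different argument.
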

\begin{proof}
Testing \eqref{eq:weakEHat} with $\hp_0 + \e \hp_1\left(\cdot_x, \frac{\cdot_x}{\e}\right)$ for $\hp_0 \in C^\infty(\Omega)$ and $\hp_1 \in C^\infty(\Omega;C^\infty_\#(Y))$ gives
\begin{align*}
\intO \widetilde{J_\e}(x) \widetilde{\Psi_\e^{-1}}(x) \hat{A}_\e(x) \widetilde{\Psi_\e^{-\top}}(x) \widetilde{\nabla \hu_\e}(x) \cdot \left(\nabla_x \hp_0(x) + \e \nabla_x \hp_1\xxeps + \nabla_y \hp_1\xxeps\right)
\\
+
\widetilde{J_\e}(x)\widetilde{\hue}(x) \left(\hp_0(x) + \e \hp\xxeps\right) dx = \intO \widetilde{J_\e}(x) \widetilde{\hat{f}_\e}(x) \left(\hp_0(x) + \e \hp\xxeps\right) dx.
\end{align*}
The uniform estimate of $u_\e$, given by Proposition \ref{prop:ExiHu}, and  the compactness result for periodic domains (cf.~Proposition \ref{prop:PorousCompact}) yield the existence of $(\hu_0,\hu_1) \in H^1(\Omega) \times L^2(\Omega; H^1_\#(\Yp)/\R)$ and a subsequence $\es$ and such that $\widetilde{\hu_\es} \TscW{2}\chi_{\Yp} \hu_0$ and $\widetilde{\nabla\hat{u}_\es}\TscW{2}\chi_{\Yp}\nabla_x \hu_0 + \widetilde{\nabla_y \hu}_1$.	
Then, we pass to the limit $\es \to 0$ and obtain \eqref{eq:weakForm-two-scale-limit-hat0} for smooth test functions. By a density argument, \eqref{eq:weakForm-two-scale-limit-hat0} follows for test functions in $H^1(\Omega) \times L^2(\Omega; H^1_\#(\Yp)/\R)$.

The existence and uniqueness of the solution $(\hu_0 , \hu_1) \in H^1(\Omega) \times L^2(\Omega,H^1_\#(\Yp)/\R)$ follow from the Theorem of Lax--Milgram. The necessary uniform coercivity and continuity estimates of the left-hand side can be proven in a standard way, while the uniform coercivity of $J_0 \Psi_0^{-1} \hat{A}_0\Psi_0^{-\top}$ can be proven like in Proposition \ref{prop:ExiHu}.

Since this argumentation holds for every subsequence, the uniqueness of the solution of \eqref{eq:weakForm-two-scale-limit-hat0} implies that the convergences hold for the whole sequence.
\end{proof}
We rewrite the two-scale limit problem \eqref{eq:weakForm-two-scale-limit-hat0} into the following homogenised problem, which is defined on the cylindrical two-scale domain $\Omega \times \Yp$ and contains the transformation coefficients $J_0$ and $\Psi_0$.
\begin{prop}\label{prop:LimitHu0}
Let $\hu_0 \in H^1(\Omega)$ be the solution of \eqref{eq:weakForm-two-scale-limit-hat0}. Then, it solves
\begin{align}\label{eq:weak-hu0}
\intO \hat{B}_0(x) \nabla \hu_0(x) \cdot \nabla \hp(x) + \Theta(x)\xy \hu_0(x)\hp(x) dx = \intOYp J_0\xy \hat{f}_0\xy dy \ \hp(x) dx
\end{align}
for every $\hp \in H^1(\Omega)$, where $\Theta(x) = \intYp J_0\xy dy$ and $\hat{B}_0 \in L^\infty(\Omega)^{N \times N}$ is given by
\begin{align}\label{eq:hB0}
(\hat{B}_0)_{ij}(x) \coloneqq \intYp J_0\xy \Psi_0^{-1}\xy \hat{A}_0\xy \Psi_0^{-\top}\xy (e_j +\nabla_y \hat{w}_j\xy) \cdot e_i dy
\end{align}
and $\hat{w}_j$ is defined as the unique solution in $L^2(\Omega; H^1_\#(\Yp)/\R)$ such that 
\begin{align}\label{eq:cellProb-hw}
\intOYp J_0\xy \Psi_0^{-1}\xy \hat{A}_0\xy \Psi_0^{-\top}\xy  (\nabla_y\hat{w}_j\xy + e_j) \cdot \nabla_y \hp_1\xy dydx= 0
\end{align}
for every $\hp_1 \in L^2(\Omega; H^1_\#(\Yp)/\R)$.
\end{prop}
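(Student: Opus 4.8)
The plan is to decouple the two-scale limit problem \eqref{eq:weakForm-two-scale-limit-hat0} into a family of cell problems and a single macroscopic equation, exactly as in classical periodic homogenisation; the only new feature is that the transformation coefficients $J_0$ and $\Psi_0$ are carried along, but since they are uniformly bounded and $J_0\ge c_J$ (Lemma \ref{lemma:TwoScalePsi}) they do not affect the structure of the argument.

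First I would test \eqref{eq:weakForm-two-scale-limit-hat0} with $\hp_0=0$ and arbitrary $\hp_1\in L^2(\Omega;H^1_\#(\Yp)/\R)$. This isolates
\[
\intOYp J_0\xy \Psi_0^{-1}\xy \hat{A}_0\xy \Psi_0^{-\top}\xy\bigl(\nabla_x\hu_0(x)+\nabla_y\hu_1\xy\bigr)\cdot\nabla_y\hp_1\xy\,dy\,dx=0 .
\]
Because this relation is linear in $\nabla_x\hu_0(x)\in\R^N$ and $\nabla_x\hu_0\in L^2(\Omega)^N$, the corrector $\hu_1$ admits the representation $\hu_1\xy=\sum_{j=1}^N\partial_{x_j}\hu_0(x)\,\hat{w}_j\xy$ modulo an additive function of $x$, where $\hat{w}_j$ solves the cell problem \eqref{eq:cellProb-hw}. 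Well-posedness of \eqref{eq:cellProb-hw} follows from the Theorem of Lax--Milgram on $L^2(\Omega;H^1_\#(\Yp)/\R)$, the coercivity of $J_0\Psi_0^{-1}\hat A_0\Psi_0^{-\top}$ being obtained exactly as in the proof of Proposition \ref{prop:ExiHu} from the coercivity of $\hat A_0$ together with the uniform bounds on $\Psi_0,\Psi_0^{-1},J_0$. The same uniform-in-$x$ bounds give $\hat w_j$ with $y$-gradient bounded in $L^\infty(\Omega;L^2(\Yp))$, hence $\hat B_0\in L^\infty(\Omega)^{N\times N}$, and make the definition \eqref{eq:hB0} meaningful.

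Next I would test \eqref{eq:weakForm-two-scale-limit-hat0} with $\hp_1=0$ and arbitrary $\hp_0\in H^1(\Omega)$, which gives
\[
\intOYp J_0\xy\Psi_0^{-1}\xy\hat A_0\xy\Psi_0^{-\top}\xy\bigl(\nabla_x\hu_0(x)+\nabla_y\hu_1\xy\bigr)\cdot\nabla_x\hp_0(x)\,dy\,dx+\intOYp J_0\xy\hu_0(x)\hp_0(x)\,dy\,dx=\intOYp J_0\xy\hat f_0\xy\hp_0(x)\,dy\,dx .
\]
Substituting $\nabla_y\hu_1\xy=\sum_j\partial_{x_j}\hu_0(x)\,\nabla_y\hat w_j\xy$ and performing the $y$-integration, the first term becomes $\intO\hat B_0(x)\nabla_x\hu_0(x)\cdot\nabla_x\hp_0(x)\,dx$ by the definition \eqref{eq:hB0} (test \eqref{eq:hB0} against $e_i$ and use $\nabla_x\hu_0=\sum_j\partial_{x_j}\hu_0\,e_j$), the second term becomes $\intO\Theta(x)\hu_0(x)\hp_0(x)\,dx$ with $\Theta(x)=\intYp J_0\xy\,dy$, and the right-hand side becomes $\intO\bigl(\intYp J_0\xy\hat f_0\xy\,dy\bigr)\hp_0(x)\,dx$. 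This is precisely \eqref{eq:weak-hu0}, and since it holds for all $\hp_0\in H^1(\Omega)$ the claim follows.

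I expect the only genuinely delicate point to be the linear superposition step: assembling the pointwise (in $x$) cell solutions into a global element of $L^2(\Omega;H^1_\#(\Yp)/\R)$ requires the measurable dependence of $\hat w_j(\cdot_x,\cdot_y)$ on $x$ and the uniform-in-$x$ ellipticity bounds from Lemma \ref{lemma:TwoScalePsi}; this is exactly where the integrability assumptions on $\psi_0$ enter. The remaining manipulations --- expanding the bilinear form, exchanging order of integration, and the density of smooth test functions already exploited in Proposition \ref{prop:Two-scale-Limit-Hu0} --- are routine.
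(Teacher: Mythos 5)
Your proposal is correct and follows essentially the same route as the paper's proof: testing \eqref{eq:weakForm-two-scale-limit-hat0} with $\hp_0=0$ to obtain the cell problem, using linearity to write $\hu_1=\sum_{j=1}^N\partial_{x_j}\hu_0\,\hat w_j$, and then testing with $\hp_1=0$ and substituting this representation to arrive at \eqref{eq:weak-hu0}. The additional remarks on coercivity via Lemma \ref{lemma:TwoScalePsi} and on the boundedness of $\hat B_0$ are consistent with what the paper defers to the argument of Proposition \ref{prop:ExiHu}.
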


\begin{proof}	
Choosing $\hp_0 = 0$ in \eqref{eq:weakForm-two-scale-limit-hat0} yields
\begin{align}
\intOYp J_0\xy \Psi_0^{-1}\xy\hat{A}_0\xy  \Psi_0^{-\top}\xy (\nabla_x \hu_0(x) + \nabla_y \hu_1\xy) \cdot \nabla_y \hp_1\xy dy dx
=
0,
\end{align}
which implies $\hu_1= \sum\limits_{j = 1}^N \partial_{x_j} \hu_0 \hat{w}_j$, where $\hat{w}_j$ is the unique solution of the cell problem \eqref{eq:cellProb-hw}.
Inserting $\hu_1 = \sum\limits_{j = 1}^N \partial_{x_j} \hu_0 \hat{w}_j$ in \eqref{eq:weakForm-two-scale-limit-hat0} and choosing $\hp_1 = 0$ yield \eqref{eq:weak-hu0} for $\hat{B}_0$ given by \eqref{eq:hB0}.
\end{proof}

\begin{prop}\label{prop:Two-scale-Limit-Hu2}
Let $l =2$ and let $\hue$ be the solution of \eqref{eq:weakEHat} given by Proposition \ref{prop:ExiHu}. 
Then, $\widetilde{\hu_\e} \TscW{2}\widetilde{\hu_0}$ and $\e\widetilde{\nabla \hue} \TscW{2}\widetilde{\nabla_y \hu_0}$, where $\hu_0$ is the unique solution in $L^2(\Omega,H^1_\#(\Yp))$ such that
\begin{align}\label{eq:weakForm-two-scale-limit-hat2}
\intOYp \hat{D}_0\xy \nabla_y \hu_0\xy \cdot \nabla_y\hp\xy + J_0\xy \hu_0\xy\hp\xy dy dx= \intOYp J_0\xy \hat{f}_0\xy \hp\xy dy dx
\end{align}
for every $\hp \in L^2(\Omega;H^1_\#(\Yp))$, where $\hat{D}_0 = J_0 \Psi_0^{-1} \hat{A}_0 \Psi_0^{-\top}$.
\end{prop}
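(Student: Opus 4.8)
The plan is to follow the standard two-scale homogenisation scheme, now carried out on the periodic reference domain $\hOe$, in close parallel to the proof of Proposition \ref{prop:Two-scale-Limit-Hu0} but adapted to the scaling $l=2$. First I would invoke the uniform estimate \eqref{eq:UniformEstHu} of Proposition \ref{prop:ExiHu}, which for $l=2$ reads $\norm{\hue}{L^2(\hOe)} + \e\norm{\nabla\hue}{L^2(\hOe)} \leq C$, and apply the compactness statement Proposition \ref{prop:PorousCompact}\eqref{item:PorousCompact2}. This yields a subsequence $\es$ and a limit $\hu_0 \in L^2(\Omega;H^1_\#(\Yp))$ with $\widetilde{\hu_\es} \TscW{2} \widetilde{\hu_0}$ and $\es\widetilde{\nabla\hu_\es} \TscW{2} \widetilde{\nabla_y\hu_0}$. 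Alongside this I would record the convergence of the transformed diffusion coefficient: by Lemma \ref{lemma:TwoScalePsi}, the assumption $\widetilde{\hat A_\e} \Tscs{\infty} \widetilde{\hat A_0}$, and the product rule Theorem \ref{thm:T-SC-Prod}, the matrix field $\widetilde{J_\e}\widetilde{\Psi_\e^{-1}}\widetilde{\hat A_\e}\widetilde{\Psi_\e^{-\top}}$ two-scale converges strongly, in every $L^q$ with $q<\infty$ while staying bounded in $L^\infty$, to $\widetilde{\hat D_0}$ with $\hat D_0 = J_0\Psi_0^{-1}\hat A_0\Psi_0^{-\top}$.

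Next I would test \eqref{eq:weakEHat} with oscillating functions $\hp_\e(x) = \hp\xxeps$ for $\hp \in C^\infty(\Omega;C^\infty_\#(Y))$, so that $\nabla\hp_\e = \nabla_x\hp\,\xxeps + \frac{1}{\e}\nabla_y\hp\,\xxeps$. Because of the prefactor $\e^2$, the diffusion integrand splits into $\e\cdot\big(\e\widetilde{\nabla\hu_\e}\big)$ paired with $\nabla_x\hp_\e$, which is two-scale bounded and hence vanishes after multiplication by $\e\to 0$, and $\big(\e\widetilde{\nabla\hu_\e}\big)$ paired with $\nabla_y\hp_\e$, weighted by the coefficient matrix. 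To pass to the limit along $\es$, I would rewrite each $\Omega$-integral as an integral of unfolded products over $\Omega\times Y$ using that $\T_\e$ is isometric and multiplicative (Theorem \ref{thm:NormEqualityUnfold}, Theorem \ref{thm:T-SC-Equi-Unf}), and then use weak$\times$strong convergence in $L^2(\Omega\times Y)$: the unfolded coefficient times $\nabla_y\hp_\e$ converges strongly and $\T_\es(\es\widetilde{\nabla\hu_\es})$ converges weakly, giving $\int_{\Omega\times\Yp}\hat D_0\nabla_y\hu_0\cdot\nabla_y\hp$. The zeroth-order term $\widetilde{J_\e}\widetilde{\hu_\e}\hp_\e$ and the right-hand side $\widetilde{J_\e}\widetilde{\hat f_\e}\hp_\e$ are treated identically, using $\widetilde{J_\e} \Tscs{\infty} \widetilde{J_0}$ together with $\widetilde{\hu_\e} \TscW{2} \widetilde{\hu_0}$ and $\widetilde{\hat f_\e} \TscW{2} \widetilde{\hat f_0}$, and produce $\int_{\Omega\times\Yp}J_0\hu_0\hp$ and $\int_{\Omega\times\Yp}J_0\hat f_0\hp$. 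This establishes \eqref{eq:weakForm-two-scale-limit-hat2} for smooth $\hp$, and a density argument --- $C^\infty(\Omega;C^\infty_\#(Y))$ is dense in $L^2(\Omega;H^1_\#(\Yp))$, extending by zero across the hole as in Proposition \ref{prop:Two-scale-Limit-Hu0} --- extends it to all $\hp \in L^2(\Omega;H^1_\#(\Yp))$.

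Finally I would settle well-posedness of the limit problem by Lax--Milgram on $L^2(\Omega;H^1_\#(\Yp))$: continuity of the bilinear form is immediate from the $L^\infty$-bounds on $J_0$ and $\hat D_0$ (Lemma \ref{lemma:TwoScalePsi}), while coercivity follows from $J_0 \geq c_J$ together with the coercivity of $\hat D_0$, which is obtained exactly as in Proposition \ref{prop:ExiHu} from the coercivity of $\hat A_0$ and the uniform bounds on $\Psi_0$ and $\Psi_0^{-1}$. Uniqueness of the limit then upgrades the subsequential convergences to convergence of the whole sequence $\e\to 0$. I do not expect a serious obstacle here --- this is the simpler of the two homogenisation cases, with no macroscopic corrector --- the only step needing some care being the bookkeeping of the $L^q$-exponents when multiplying the transformed coefficients (which converge strongly two-scale only in every $L^q$, $q<\infty$, not in $L^\infty$) against the $L^2$-bounded scaled gradient sequence, which the unfolding operator and weak$\times$strong convergence handle cleanly.
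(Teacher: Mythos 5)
Your proposal is correct and follows essentially the same route as the paper: compactness via Proposition \ref{prop:PorousCompact}\eqref{item:PorousCompact2} from the uniform estimate of Proposition \ref{prop:ExiHu}, limit passage with oscillating test functions $\hp(\cdot_x,\cdot_x/\e)$ using strong-times-weak two-scale convergence of the coefficients against the scaled gradients, a density argument, Lax--Milgram for well-posedness, and the subsequence principle to upgrade to the whole sequence. You merely spell out the $\e$-bookkeeping of the two gradient contributions and the $L^q$-exponents more explicitly than the paper does (and implicitly correct the paper's typographical slip of writing $\nabla_x\hp(\cdot_x,\cdot_x/\e)$ for the test function).
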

\begin{proof}
The uniform estimate of $\hu_\e$, given by Proposition \ref{prop:ExiHu}, and the compactness result for periodic domains (cf.~\ref{prop:PorousCompact})
imply the existence of a subsequence $\es$ and $\hat{u}_0 \in L^2(\Omega; H^1_\#(\Yp))$ such that $\widetilde{\hu_\es} \TscW{2}\widetilde{\hu_0}$ and $\es\widetilde{\nabla\hu_\es}\TscW{2}\widetilde{\nabla_y \hu_0}$.
We test \eqref{eq:weakEHat} with $\nabla_x \hp\left(\cdot_x, \frac{\cdot_x}{\e}\right)$ for $\hp \in C^\infty(\Omega; C^\infty_\#(Y))$. After passing to the limit $\es \to 0$, we obtain \eqref{eq:weakForm-two-scale-limit-hat2} for smooth test functions. Since $C^\infty(\Omega; C^\infty_\#(Y))$ is dense in $L^2(\Omega; H^1_\#(\Yp))$, \eqref{eq:weakForm-two-scale-limit-hat2} holds for any $\hp \in L^2(\Omega;H^1_\#(\Yp))$.

The existence and uniqueness of the solution $\hu_0 \in L^2(\Omega,H^1_\#(\Yp))$ follow from the Theorem of Lax--Milgram. The necessary uniform coercivity and continuity estimates can be proven in a standard way, while the uniform coercivity of $J_0 \Psi_0^{-1} \hat{A}_0\Psi_0^{-\top}$ can be proven like in Proposition \ref{prop:ExiHu}.

Since this argumentation holds for every subsequence, the uniqueness of the solution of \eqref{eq:weakForm-two-scale-limit-hat2} implies that the convergences hold for the whole sequence.
\end{proof}

\subsection{Back-transformation}\label{subsec:backtrafo}
Using Theorem \ref{thm:TrafoLp}, Theorem \ref{thm:TrafoWp0} and Theorem \ref{thm:TrafoWp2}, we can transform the two-scale limit problems back. Thus, we can derive the two-scale limit problems of \eqref{eq:weakE} for $l=0$ and $l=2$. Moreover, these limit problems do not depend on the chosen transformations $\psi_\e$ and $\psi_0$.
\begin{thm}
Let $l = 0$ and let $Y^*_\#$ be connected. Let $u_\e$ be the solution of \eqref{eq:weakE}. Then, $\widetilde{u_\e} \TscW{2}\chi_{Q}\widetilde{u_0}$ and
$\widetilde{\nabla u_\e} \TscW{2} \chi_Q \widetilde{\nabla_x u_0} + \widetilde{\nabla_y u_1}$,
where $(u_0,u_1)$ is the unique solution in $H^1(\Omega) \times L^2(\Omega;H^1_\#(\Ypx)/\R)$ of
\begin{align}\notag
\int\limits_{\Omega} \intYpx A_0\xy (\nabla_x u_0(x) + \nabla_y u_1\xy) \cdot (\nabla_x \varphi_0(x) + \nabla_y \varphi_1\xy) + u_0(x) \varphi_0(x) dy dx
\\\label{eq:weakForm-two-scale-limit-0}
 = \intO \intYpx f_0\xy dy \ \varphi_0(x) dx
\end{align}
for every $(\varphi_0, \varphi_1) \in H^1(\Omega) \times L^2(\Omega;H^1_\#(\Ypx)/\R)$.
\end{thm}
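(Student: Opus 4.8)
The plan is to derive this statement from the two-scale transformation machinery rather than homogenising \eqref{eq:weakE} directly --- a direct approach is awkward because the cell $\Ypx$ depends on $x$, so oscillating test functions $\varphi_1(x,x/\e)$ do not make literal sense. First I would use Proposition~\ref{prop:E-WeakForm-Equi} to identify $\hue\coloneqq u_\e\circ\psi_\e$ as the solution of the periodic substitute problem \eqref{eq:weakEHat}, and the uniform bound \eqref{eq:UniformEstHu} from Proposition~\ref{prop:ExiHu} together with Lemma~\ref{lem:TrafoBounded}\eqref{enum:LpBoundedTrafo} to note that $u_\e$ is bounded in $L^2(\Oe)$. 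Since $l=0$ and $Y^*_\#$ is connected, Proposition~\ref{prop:Two-scale-Limit-Hu0} then provides $(\hu_0,\hu_1)\in H^1(\Omega)\times L^2(\Omega;H^1_\#(\Yp)/\R)$, the unique solution of \eqref{eq:weakForm-two-scale-limit-hat0}, with $\widetilde{\hu_\e}\TscW{2}\chi_\Yp\hu_0$ and $\widetilde{\nabla\hue}\TscW{2}\chi_\Yp\nabla_x\hu_0+\widetilde{\nabla_y\hu_1}$. Applying the back-transformation results Theorem~\ref{thm:TrafoLp} and Theorem~\ref{thm:TrafoWp0} yields at once $\widetilde{u_\e}\TscW{2}\chi_Q\widetilde{u_0}$ and $\widetilde{\nabla u_\e}\TscW{2}\chi_Q\widetilde{\nabla_x u_0}+\widetilde{\nabla_y u_1}$ with $u_0=\hu_0\in H^1(\Omega)$ and $u_1=\hu_{1,\psi_0^{-1}}+\check{\psi}_0^{-1}\cdot\nabla_x\hu_0\in L^2(\Omega;H^1_\#(\Ypx)/\R)$.

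It then remains only to check that this $(u_0,u_1)$ solves the transformationally independent weak form \eqref{eq:weakForm-two-scale-limit-0}, which I would do by transforming \eqref{eq:weakForm-two-scale-limit-hat0} back. Given $(\varphi_0,\varphi_1)\in H^1(\Omega)\times L^2(\Omega;H^1_\#(\Ypx)/\R)$, I would set $\hp_0\coloneqq\varphi_0$ and $\hp_1\coloneqq\varphi_{1,\psi_0}+\check{\psi}_0\cdot\nabla_x\varphi_0$; using the definition of $L^2(\Omega;H^1_\#(\Ypx)/\R)$, the $Y$-periodicity of $\check{\psi}_0$ and the uniform bounds on $\Psi_0$ and $J_0$ from Lemma~\ref{lemma:TwoScalePsi}, this is an admissible test pair for \eqref{eq:weakForm-two-scale-limit-hat0}, and the assignment $(\varphi_0,\varphi_1)\mapsto(\hp_0,\hp_1)$ is a linear bijection between the two test-function spaces, with inverse $\varphi_0=\hp_0$, $\varphi_1=\hp_{1,\psi_0^{-1}}+\check{\psi}_0^{-1}\cdot\nabla_x\hp_0$ (using $\check{\psi}_0(x,\psi_0^{-1}\xy)=-\check{\psi}_0^{-1}\xy$). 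Inserting the equivalent relation $\hu_1=u_{1,\psi_0}+\check{\psi}_0\cdot\nabla_x u_0$ and applying the chain rule together with $D_y\check{\psi}_0=\Psi_0-\1$ (Lemma~\ref{lemma:TwoScalePsi}), I would verify the pointwise identity
\[
\Psi_0^{-\top}\xy\big(\nabla_x\hu_0(x)+\nabla_y\hu_1\xy\big)=\nabla_x u_0(x)+\nabla_z u_1(x,z)\qquad\text{at }z=\psi_0\xy\in\Ypx,
\]
and the analogous identity for the test functions. Feeding these in, using the adjointness $(\Psi_0^{-1}w)\cdot v=w\cdot(\Psi_0^{-\top}v)$, $\hat{A}_0=A_{0,\psi_0}$ and $\hat{f}_0=f_{0,\psi_0}$, and finally the change of variables $z=\psi_0(x,\cdot_y)$ in the inner integral (which turns $\intYp J_0\xy(\cdot)\,dy$ into $\intYpx(\cdot)\,dz$ since $J_0=\det\Psi_0$), each of the three terms of \eqref{eq:weakForm-two-scale-limit-hat0} becomes exactly the corresponding term of \eqref{eq:weakForm-two-scale-limit-0}; as the test-function correspondence is a bijection, $(u_0,u_1)$ is a solution of \eqref{eq:weakForm-two-scale-limit-0}.

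To conclude, I would establish uniqueness for \eqref{eq:weakForm-two-scale-limit-0} by the Theorem of Lax--Milgram: continuity of the bilinear form is clear, and coercivity follows from the coercivity of $A_0$ combined with a Poincaré inequality on $H^1_\#(\Ypx)/\R$, which transfers from the one on $H^1_\#(\Yp)/\R$ (valid because $Y^*_\#$ is connected) through the bi-Lipschitz map $\psi_0(x,\cdot)$, with constant uniform in $x$ by the bounds on $\Psi_0$ and $J_0$. Since the whole argument applies verbatim to every subsequence and the limit problem has a unique solution, the two-scale convergences hold for the entire sequence. I expect the only genuinely delicate point to be the bookkeeping in the second step: separating the $\nabla_x$ and $\nabla_y$ contributions correctly when composing $\hu_1$ and the test functions with $\psi_0$, and checking that the map between the non-cylindrical corrector spaces is a true bijection and not merely a one-sided substitution; the rest is a routine application of results already established.
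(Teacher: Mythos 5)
Your proposal is correct and follows essentially the same route as the paper: identify $\hue=u_\e\circ\psi_\e$ via Proposition \ref{prop:E-WeakForm-Equi}, homogenise the substitute problem via Proposition \ref{prop:Two-scale-Limit-Hu0}, back-transform the two-scale limits with Theorems \ref{thm:TrafoLp} and \ref{thm:TrafoWp0}, and recover \eqref{eq:weakForm-two-scale-limit-0} by testing \eqref{eq:weakForm-two-scale-limit-hat0} with $(\varphi_0,\varphi_{1,\psi_0}+\check{\psi}_0\cdot\nabla_x\varphi_0)$ and using the identities $\Psi_0^{-\top}(x,\psi_0^{-1}\xy)=\1+\nabla_y\check{\psi}_0^{-1}\xy$ and $\check{\psi}_0(x,\psi_0^{-1}\xy)=-\check{\psi}_0^{-1}\xy$, exactly as the paper does. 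Your explicit Lax--Milgram/Poincaré argument for uniqueness of \eqref{eq:weakForm-two-scale-limit-0} is a harmless addition the paper leaves implicit via the bijection of test-function pairs.
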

\begin{proof}
Proposition \ref{prop:E-WeakForm-Equi} shows that $\hu_\e =u_\e \circ \psi_\e$, where $\hu_\e$ is the unique solution of \eqref{eq:weak-hu0}. By Theorem~\ref{thm:TrafoLp}, we obtain $\widetilde{u_\e} \TscW{2}\chi_Q \widetilde{u_0}$ and, by Theorem \ref{thm:TrafoWp0}, $\widetilde{\nabla u_\e} \TscW{2}\chi_Q \widetilde{\nabla_x u_0} + \widetilde{\nabla_y u_1}$ for $u_0= \hat{u}_0$ and $u_1= \hu_{1, \psi_0^{-1}} + \check{\psi}_0^{-1}\cdot \nabla_x u_0$, where $\hu_0$ and $\hu_1$ determine the two-scale limits of $\hue$ and $\nabla \hue$. 

Then, we test \eqref{eq:weakForm-two-scale-limit-hat0} by $(\varphi_0, \varphi_{1,\psi_0} + \check{\psi}_0 \cdot\nabla_x \varphi_0 )$ for $(\varphi_0, \varphi_1 ) \in H^1(\Omega) \times L^2(\Omega; H^1_\#(\Ypx))$ and transform the $\Yp$-integral by $\psi_0^{-1}$ so that
\begin{align}\notag
\intOYpx &A_0\xy \Big(\Psi_0^{-\top}(x,\psi_0^{-1}\xy) \nabla_x \hu_0(x) + \nabla_y \hu_1(x,\psi_0^{-1}\xy)\Big)
\\\notag
&\cdot \Big(\Psi_0^{-\top}(x,\psi_0^{-1}\xy) \nabla_x \varphi_0(x) + \nabla_y\big(\varphi_1\xy + \check{\psi}_0(x,\psi_0^{-1}\xy) \cdot\nabla_x \varphi_0(x) \big)\Big) dy dx
\\\label{eq:weakSemiTransformed}
&+ \intOYpx u_0(x) \varphi_0(x) dy dx
= 
\intOYpx f_0\xy \varphi_0(x) dy dx,
\end{align}
where $\nabla_y \check{\psi}_0(x,\psi_0^{-1}\xy)$ denotes the gradient of $y \mapsto \check{\psi}_0(x,\psi_0^{-1}\xy)$.
Using that $\Psi_0^{-\top}(x,\psi_0^{-1}\xy)= \1 + \nabla_y \check{\psi}_0^{-1} \xy$, we can rewrite
\begin{align*}
\Psi_0^{-\top}(x,\psi_0^{-1}\xy) \nabla_x \hu_0(x) + \nabla_y \hu_1(x,\psi_0^{-1}\xy) &= \nabla_x \hu_0(x) + \nabla_y ( \hu_1(x,\psi_0^{-1}\xy) + \check{\psi}_0^{-1} \cdot \nabla_x u_0(x) )
\\
&=
\nabla_x u_0(x) + \nabla_y u_1\xy.
\end{align*}
Employing that $\check{\psi}_0(x,\psi_0^{-1}\xy) =
y - \psi_0^{-1}\xy 
= -\check{\psi}_0^{-1} \xy$, we get
\begin{align*}
&\Psi_0^{-\top}(x,\psi_0^{-1}\xy) \nabla_x \varphi_0(x) + \nabla_y(\varphi_1\xy + \check{\psi}_0(x,\psi_0^{-1}\xy) \cdot\nabla_x \varphi_0(x) )
\\
&=
\nabla_x \varphi_0(x) + \nabla_y(\varphi_1\xy + \check{\psi}_0(x,\psi_0^{-1}\xy) \cdot\nabla_x \varphi_0(x) + \check{\psi}_0^{-1}\xy\cdot\nabla_x \varphi_0(x))
= \nabla_x \varphi_0(x) + \nabla_y \varphi_1\xy.
\end{align*}
Thus, \eqref{eq:weakSemiTransformed} can be simplified to \eqref{eq:weakForm-two-scale-limit-0}.
\end{proof}

From \eqref{eq:weakForm-two-scale-limit-0}, we can derive the following homogenised limit problem, which is defined on $\Omega$ with cell problems defined on $\Ypx$. However, in contrast to previous works, it does not contain Jacobians of the chosen deformation $\psi_0$.

\begin{thm}\label{thm:Limitu0}
Let $u_0\in H^1(\Omega)$ be the solution of \eqref{eq:weakForm-two-scale-limit-hat0}. Then, it solves	\begin{align}\label{eq:weak-u0}
\intO B_0(x) \nabla u_0(x) \cdot \nabla \varphi(x) + \Theta(x) u_0(x)\varphi(x) dx = \intOYpx f_0\xy dy \ \varphi(x) dx
\end{align}
for every $\varphi \in H^1(\Omega)$,
where $\Theta(x) \coloneqq \intYpx 1 dy = |\Ypx|$ 
and
$B_0 \in L^\infty(\Omega)^{N \times N}$ is given by
\begin{align}\label{eq:A0}
(B_0)_{ij}(x) \coloneqq \intYpx \delta_{ij} +\partial_{y_i} w_j\xy dy
\end{align}
and $w_j$ is defined as the unique solution $w_j \in L^2(\Omega; H^1_\#(\Ypx)/\R)$ such that 
\begin{align}\label{eq:cellProb-w}
\intOYpx  (\nabla_y w_j\xy + e_j) \cdot\nabla_y \varphi\xy dydx= 0
\end{align}
for every $\varphi \in L^2(\Omega; H^1_\#(\Ypx)/\R)$.
\end{thm}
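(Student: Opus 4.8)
The proof is the classical passage from the two-scale limit equation to the homogenised equation, carried out directly on the non-cylindrical domain $Q$ so that no transformation Jacobians survive; structurally it mirrors the proof of Proposition \ref{prop:LimitHu0}. The plan is as follows. First I would exploit the decoupled structure of \eqref{eq:weakForm-two-scale-limit-0}: inserting the test pair $(\varphi_0,\varphi_1)=(0,\varphi_1)$ with $\varphi_1\in L^2(\Omega;H^1_\#(\Ypx)/\R)$ kills the macroscopic and zero-order contributions and leaves, for a.e.\ $x\in\Omega$, the microscopic cell problem on $\Ypx$ driven by the datum $\nabla_x u_0(x)$. Since this fibrewise problem is linear in its datum, its solution has the form $u_1=\sum_{j=1}^N \partial_{x_j}u_0\,w_j$, where $w_j\in L^2(\Omega;H^1_\#(\Ypx)/\R)$ is the unique solution of the cell problem \eqref{eq:cellProb-w}. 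Existence, uniqueness and joint measurability of $w_j$ follow from a fibrewise Lax--Milgram argument, the Poincar\'e--Wirtinger inequality being available on the connected cell $\Ypx$, together with the transfer of the relevant functional-analytic properties from $L^2(\Omega;H^1_\#(\Yp))$ to $L^2(\Omega;H^1_\#(\Ypx))$ recorded in Section \ref{sec:TrafoMethod}; alternatively one simply back-transforms the already constructed corrector $\hat w_j$ of \eqref{eq:cellProb-hw}.

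Second, I would take $(\varphi_0,\varphi_1)=(\varphi,0)$ with $\varphi\in H^1(\Omega)$ in \eqref{eq:weakForm-two-scale-limit-0} and substitute $u_1=\sum_j\partial_{x_j}u_0\,w_j$. The $y$-dependence of the integrand then enters only through the correctors $w_j$, so Fubini permits carrying out the $\Ypx$-integration first: the leading term collapses to $\intO B_0(x)\nabla u_0(x)\cdot\nabla\varphi(x)\,dx$ with $B_0$ given by \eqref{eq:A0}, the zero-order term to $\intO\Theta(x)u_0(x)\varphi(x)\,dx$ with $\Theta(x)=|\Ypx|$, and the right-hand side to $\intO\big(\intYpx f_0\xy\,dy\big)\varphi(x)\,dx$. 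This is exactly \eqref{eq:weak-u0}. That $B_0\in L^\infty(\Omega)^{N\times N}$ is clear from the uniform bound $\norm{A_0}{L^\infty(Q)}<\infty$ and the uniform bound on the correctors, the latter a consequence of the uniform estimates on $\Psi_0$ and $J_0$ from Lemma \ref{lemma:TwoScalePsi} and the Lipschitz regularity of $Y^*_\#$.

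I do not expect a genuine obstacle: the algebra is routine once the correctors are in hand. The one point that really needs the apparatus developed earlier is the well-posedness of the $x$-parametrised cell problems \eqref{eq:cellProb-w} on the moving cells $\Ypx$ and the measurable $x$-dependence of $w_j$. This is most cheaply settled by transporting the cylindrical cell problem \eqref{eq:cellProb-hw} through $\psi_0^{-1}$, using $\Psi_0^{-\top}(x,\psi_0^{-1}\xy)=\1+\nabla_y\check{\psi}_0^{-1}\xy$ exactly as in the proof of the preceding theorem; this identifies $w_j$ with $\hat w_{j,\psi_0^{-1}}+\big(\check{\psi}_0^{-1}\big)_j$ and simultaneously makes manifest that $B_0$ and $\Theta$ do not depend on the chosen transformations $\psi_\e$, $\psi_0$.
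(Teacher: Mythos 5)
Your proposal is correct and follows essentially the same route as the paper: the paper's proof of Theorem \ref{thm:Limitu0} simply states that it runs as the proof of Proposition \ref{prop:LimitHu0}, i.e.\ testing \eqref{eq:weakForm-two-scale-limit-0} with $(0,\varphi_1)$ to obtain the cell problems \eqref{eq:cellProb-w} and the representation $u_1=\sum_j\partial_{x_j}u_0\,w_j$, then testing with $(\varphi,0)$ to collapse to \eqref{eq:weak-u0}, which is exactly what you carry out. Your additional remarks on the well-posedness of the cell problems on $\Ypx$ and the identification $w_j=\hat w_{j,\psi_0^{-1}}+(\check{\psi}_0^{-1})_j$ are consistent with the framework of Section \ref{sec:TrafoMethod} but go beyond what the paper records.
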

\begin{proof}
The proof of Theorem \ref{thm:Limitu0} runs as the proof of Proposition \ref{prop:LimitHu0}.
\end{proof}
Note that $\Theta$ in \eqref{eq:weak-u0} is the same as in \eqref{eq:weak-hu0} and gives the local porosity of the domain. With Lemma~\ref{lemma:TwoScalePsi}, we see that $\Theta$ is bounded from below by $c_J$ and $\Theta$ is obviously bounded from above by~$1$.

The back-transformation of the two-scale limit problem \eqref{eq:weakForm-two-scale-limit-hat2}  in its actual two-scale domain is straightforward and yields the following limit problem.
\begin{thm}\label{thm:Limitu2}
Let $l =2$ and let $u_\e$ be the solutions of \eqref{eq:weakE}. Then, $\widetilde{u_\e} \TscW{2}\chi_Q \widetilde{u_0}$ and
$\e\widetilde{\nabla u_\e}\TscW{2}\widetilde{\nabla_y u_0}$, where $u_0$ is the unique solution of the following weak form.
Find $u_0 \in L^2(\Omega;H^1_\#(\Ypx))$ such that
\begin{align}\label{eq:weakForm-two-scale-limit-2}
\intO \int\limits_{Y^p_x} A_0\xy \nabla_y u\xy\cdot\nabla_y \varphi\xy + u\xy\varphi\xy dy dx = \intO\int\limits_{Y^p_x} f\xy\varphi\xy dy dx
\end{align}
for every $\varphi \in L^2(\Omega;H^1_\#(\Ypx))$.
\end{thm}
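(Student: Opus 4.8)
The plan is to reduce everything to the already-established homogenisation of the periodic substitute problem and then to apply the two-scale transformation rules of Section~\ref{sec:TrafoMethod}. First I would invoke Proposition~\ref{prop:E-WeakForm-Equi}, so that the solution $u_\e$ of \eqref{eq:weakE} equals $\hue\circ\psi_\e^{-1}$, where $\hue$ is the solution of the substitute problem \eqref{eq:weakEHat} provided by Proposition~\ref{prop:ExiHu}. By Proposition~\ref{prop:Two-scale-Limit-Hu2} (with $l=2$) we have $\widetilde{\hu_\e}\TscW{2}\widetilde{\hu_0}$ and $\e\widetilde{\nabla\hue}\TscW{2}\widetilde{\nabla_y\hu_0}$, where $\hu_0\in L^2(\Omega;H^1_\#(\Yp))$ is the unique solution of \eqref{eq:weakForm-two-scale-limit-hat2}. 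The uniform bound on $\hue$ transfers to $u_\e$ via Lemma~\ref{lem:TrafoBounded}, so both convergences can be carried back: setting $u_0\coloneqq\hu_{0,\psi_0^{-1}}$, which lies in $L^2(\Omega;H^1_\#(\Ypx))$ by definition of that space, Theorem~\ref{thm:TrafoLp} gives $\widetilde{u_\e}\TscW{2}\chi_Q\widetilde{u_0}$ and Theorem~\ref{thm:TrafoWp2} gives $\e\widetilde{\nabla u_\e}\TscW{2}\widetilde{\nabla_y u_0}$.

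It then remains to identify the limit equation solved by $u_0$. Given a test function $\varphi\in L^2(\Omega;H^1_\#(\Ypx))$, I would set $\hp\coloneqq\varphi_{\psi_0}$, which runs over all of $L^2(\Omega;H^1_\#(\Yp))$ as $\varphi$ runs over $L^2(\Omega;H^1_\#(\Ypx))$, insert it into \eqref{eq:weakForm-two-scale-limit-hat2}, and change variables $y\mapsto z=\psi_0(x,\cdot_y)$ in the $\Yp$-integral for a.e.~$x\in\Omega$. Using the chain rule $\nabla_y\hu_0(x,y)=\Psi_0(x,y)^\top(\nabla_y u_0)(x,\psi_0(x,y))$ and the analogous identity for $\hp$, the conjugation by $\Psi_0$ cancels the factors $\Psi_0^{-1}$ and $\Psi_0^{-\top}$ inside $\hat{D}_0=J_0\Psi_0^{-1}\hat{A}_0\Psi_0^{-\top}$; together with $\hat{A}_0=A_{0,\psi_0}$, $\hat{f}_0=f_{0,\psi_0}$, the factor $J_0$ in the zero-order and right-hand-side terms, and the substitution density $dz=J_0(x,y)\,dy$, all transformation coefficients disappear and \eqref{eq:weakForm-two-scale-limit-hat2} becomes exactly \eqref{eq:weakForm-two-scale-limit-2}. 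Uniqueness of $u_0$ in $L^2(\Omega;H^1_\#(\Ypx))$ follows either from Lax--Milgram directly (the coercivity of $A_0$ is inherited from the $A_\e$) or from the uniqueness of $\hu_0$ together with the fact that the back-transformation is a bijection between the two solution spaces; the latter also shows that the convergences hold for the whole sequence and not merely along a subsequence.

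The only point needing genuine care — and hence the main, albeit modest, obstacle — is the bookkeeping in the change of variables: one must check that the $\Psi_0$-conjugation coming from the chain rule precisely annihilates $\Psi_0^{-1}\hat{A}_0\Psi_0^{-\top}$ so that $A_0$ reappears unconjugated, and that the remaining $J_0$ is exactly the Jacobian of the substitution, so that \eqref{eq:weakForm-two-scale-limit-2} is indeed transformationally independent. Everything else is a direct bookkeeping of the results of Section~\ref{sec:TrafoMethod} and a routine density argument, as already used in Proposition~\ref{prop:Two-scale-Limit-Hu2}.
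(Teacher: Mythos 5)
Your proposal is correct and follows exactly the route the paper takes (the paper compresses it into one sentence: test \eqref{eq:weakForm-two-scale-limit-hat2} with $\varphi(\cdot_x,\psi_0(\cdot_x,\cdot_y))$ and transform back with $\psi_0^{-1}$); your chain-rule computation showing that the $\Psi_0^\top$ factors from $\nabla_y\hu_0$ and $\nabla_y\hp$ cancel $\Psi_0^{-1}\hat{A}_0\Psi_0^{-\top}$ and that $J_0$ is absorbed as the Jacobian of the substitution is exactly the intended bookkeeping. The additional steps you make explicit — Proposition~\ref{prop:E-WeakForm-Equi}, Proposition~\ref{prop:Two-scale-Limit-Hu2}, Lemma~\ref{lem:TrafoBounded} and Theorems~\ref{thm:TrafoLp}, \ref{thm:TrafoWp2} for carrying the convergences back — are precisely what the paper leaves implicit.
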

Theorem \ref{thm:Limitu2} follows by testing 
\eqref{eq:weakForm-two-scale-limit-hat2} with $\varphi(\cdot_x,\psi_0(\cdot_x,\cdot_y))$ and back-transformation with $\psi_0^{-1}$. 

\section{Direct homogenisation on the locally periodic domains and further comments}
The compactness results for locally periodic domains which we have developed in this work (cf. Theorem~\ref{thm:TrafoLp}, Theorem  \ref{thm:TrafoWp0} and Theorem \ref{thm:TrafoWp2}) allow us to pass directly to the limit $\e \to 0$ in \eqref{eq:weakE}. The argumentation is the same as in the periodic case.
However, the problem which we have considered is only an easy linear problem and uniform a-priori estimates can easily be derived on the locally periodic domain. If the homogenisation of a more difficult problem is considered, for instance the Stokes problem or non-linear problems, the homogenisation can not solely be done with the compactness results derived in this work. 
Nevertheless, the explicit transformation on the periodic domain becomes useful since the derivation of further two-scale compactness results as well as the derivation of uniform estimates can be easier in the strict periodic setting. Indeed, the results of this work allow to transform uniform estimates and other two-scale compactness results back. Therefore, the homogenisation can be done in the locally periodic setting as well.
Moreover, it can be reasonable to transform the limit problem from the actual non-cylindrical two-scale domain to the cylindrical coordinates $\Omega \times \Yp$ in order to derive uniform estimates on the homogenised tensor and the cell problems.

We want to note that the original motivation for this two-scale transformation method originates from problems on evolving microstructures. There, problems are considered on a time interval $S$ and a time dependent domain $\Oe(t)$ (cf. \cite{Pet07a}, \cite{Pet07b}, \cite{Pet09a}, \cite{Pet09b}, \cite{EM17}, \cite{GNP21}).
The two-scale transformation concept is basically the same for these problems since time is only a parameter in the concept of the two-scale convergence. Thus, our results can be carried over to these problems, where the domain $\Oe(t)\coloneqq \psi_\e(t,\hOe)$ is defined with a family of locally periodic transformations $\psi_\e : S \times \hOe \rightarrow \Omega$ which are dependent on time.

\begin{ack}
I would like to thank Malte~A.~Peter for interesting discussions of this subject.
\end{ack}


\begin{thebibliography}{10}
\bibitem{All92}
G.~Allaire.
\newblock Homogenization and two-scale convergence.
\newblock {\em Siam J. Math. Anal.}, 23:1482--1518, 1992.	

\bibitem{All96}
G.~Allaire and M.~Briane.
\newblock Multiscale convergence and reiterated homogenisation
\newblock {\em Proc. Roy. Soc. Edinburgh}, 126A:297--342, 1996.
	
\bibitem{CDG02}
D.~Cioranescu, A.~Damlamian and G.~Griso. \newblock Periodic unfolding and homogenization.
\newblock {\em C. R. Acad. Sci. Paris Sér. 1}, 335:99--104, 2002.
			
\bibitem{CDG08}
D.~Cioranescu, A.~Damlamian and G.~Griso. \newblock The Periodic Unfolding Method in Homogenization.
\newblock {\em SIAM J. Math. Anal.}, 40:1585--1620, 2008.
	
\bibitem{EM17}
M.~Eden and A.~Muntean.
\newblock Homogenization of a fully coupled thermoelasticity problem for a highly heterogeneous medium with a priori known phase transformations.
\newblock {\em Math. Methods Appl. Sci.}, 40:3955--3972, 2017.

\bibitem{GNP21}
M.~Gahn, M.~Neuss-Radu and I.~S.~Pop.
\newblock Homogenization of a reaction-diffusion-advection problem in an evolving micro-domain and including nonlinear boundary conditions.
\newblock {\em J. Differ. Equations}, 289:95--127, 2021.
	
\bibitem{LNW02}
D.~Lukkassen, G.~Nguetseng and P.~Wall.
\newblock Two-scale convergence.
\newblock {\em Int. J. Pure Appl. Math.}, 2:35--86, 2002.

\bibitem{Pet07a}
M.~A.~Peter.
\newblock Homogenisation in domains with evolving microstructure.
\newblock {\em C.~R.~M\'ecanique}, 335:357--362, 2007.
	
\bibitem{Pet07b}
M.~A.~Peter. 
\newblock Homogenisation of a chemical degradation mechanism inducing an evolving microstructure.
\newblock {\em C.~R.~M\'ecanique}, 335:679--684, 2007.
	
\bibitem{Pet09a}
M.~A.~Peter and M.~B\"ohm.
\newblock Coupled reaction--diffusion processes inducing an evolution of the microstructure: Analysis and homogenization.
\newblock {\em Nonlinear Anal.}, 70:806--821, 2009.
	
\bibitem{Pet09b}
M.~A.~Peter and M.~B\"ohm.
\newblock Multiscale Modelling of Chemical Degradation Mechanisms in Porous Media with Evolving Microstructure.
\newblock {\em Multiscale Model. Simul.}, 7:1643--1668, 2009

\bibitem{SP80} E.~Sanchez-Palencia. \newblock {\em Non-homogeneous Media and Vibration Theory}, volume~127 of {\em Lecture Notes in Physics}. \newblock Springer-Verlag, Berlin Heidelberg, 1980.
	
\bibitem{Tar79}
L.~Tartar.
\newblock Convergence of the homogenization process.
\newblock Appendix of \cite{SP80}.




\end{thebibliography}
\end{document}